\documentclass[msom,sglanonrev]{informs4}
\usepackage{float}
\RequirePackage{tgtermes}
\RequirePackage{newtxtext}
\RequirePackage{newtxmath}
\RequirePackage{bm}
\RequirePackage{endnotes}

\OneAndAHalfSpacedXII %

\usepackage[utf8]{inputenc}
\usepackage[english]{babel}
\usepackage{graphicx,amsfonts,amssymb}
\makeatletter
\@ifpackageloaded{natbib}{}{\usepackage[authoryear,sort&compress]{natbib}}
\makeatother
\usepackage[colorlinks]{hyperref}
\usepackage{hypernat}
\usepackage{xcolor}
\definecolor{NavyBlue}{RGB}{35,35,142}
\definecolor{RawSienna}{RGB}{199,97,20}
\hypersetup{
    colorlinks,%
    citecolor=NavyBlue,%
    filecolor=NavyBlue,%
    linkcolor=RawSienna,%
    urlcolor=NavyBlue
}

\usepackage{xparse}



\usepackage{color}
\usepackage[colorinlistoftodos,textsize=scriptsize]{todonotes}
\newcommand{\comAP}[1]{\todo[inline,color=blue!20]{A: #1}}

\theoremstyle{plain}

  {\renewcommand{\qedsymbol}{$\triangle$}%
  \pushQED{\qed}\begin{remX}}
  {\popQED\end{remX}}
\newtheorem{exX}{Example}
  {\renewcommand{\qedsymbol}{$\triangle$}%
  \pushQED{\qed}\begin{exX}}
  {\popQED\end{exX}}

\DeclareMathOperator*{\esssup}{ess\,sup} %
\DeclareMathOperator*{\cov}{cov} %
\DeclareMathOperator*{\vect}{vect} %
\DeclareMathOperator*{\val}{val} %
\DeclareMathOperator*{\Sol}{Sol} %

\newcommand{\lra}[2]{\overset{#2}{\underset{#1}{\longrightarrow}}}
\newcommand{\ra}{\rightarrow}
\newcommand*{\arrow}[1][]{\mathbin{\tikz \draw [densely dashed,#1] (0pt,0.3ex) -- (1.5em,0.3ex);}}%

\newcommand{\re}{q}
\newcommand{\rleq}{\preceq}
\newcommand{\rgeq}{\succeq}
\newcommand{\nrleq}{\npreceq}
\newcommand{\nrgeq}{\nsucceq}
\newcommand{\rplus}{\oplus}
\newcommand{\bigrplus}{\bigoplus}
\newcommand{\rset}{M}
\newcommand{\rcost}{c}
\newcommand{\rmeas}{\rho}
\newcommand{\meet}{\wedge} %
\newcommand{\bigmeet}{\bigwedge}
\newcommand{\join}{\vee} %
\newcommand{\bigjoin}{\bigvee}

\NewDocumentCommand\prt{O{}m}{\mathrm{prt}\IfNoValueTF{#1}{}{_{#1}}(#2)}
\NewDocumentCommand\cpt{O{}m}{\mathrm{cpt}\IfNoValueTF{#1}{}{_{#1}}(#2)}
\NewDocumentCommand\asc{O{}m}{\mathrm{asc}\IfNoValueTF{#1}{}{_{#1}}(#2)}
\NewDocumentCommand\casc{O{}m}{\overline{\mathrm{asc}}\IfNoValueTF{#1}{}{_{#1}}(#2)}
\NewDocumentCommand\cascg{mm}{\casc[#1]{#2}}
\NewDocumentCommand\dsc{O{}m}{\mathrm{dsc}\IfNoValueTF{#1}{}{_{#1}}(#2)}
\NewDocumentCommand\dscg{mm}{\dsc{#1}{#2}}
\NewDocumentCommand\cdsc{O{}m}{\overline{\mathrm{dsc}}\IfNoValueTF{#1}{}{_{#1}}(#2)}
\NewDocumentCommand\cld{O{}m}{\mathrm{cld}\IfNoValueTF{#1}{}{_{#1}}(#2)}
\NewDocumentCommand\fa{O{}m}{\mathrm{fa}\IfNoValueTF{#1}{}{_{#1}}(#2)}
\NewDocumentCommand\mb{ooo}{\mathrm{mb}\IfNoValueTF{#1}{}{\IfNoValueTF{#2}{(#1)}{\IfNoValueTF{#3}{_{#1}(#2)}{_{#1|#2}(#3)}}}}

\usepackage{bbm}
\newcommand{\ind}{\mathbbm{1}}

\newcommand{\bfa}{\boldsymbol{a}}
\newcommand{\bfz}{\boldsymbol{z}}
\newcommand{\bfe}{\boldsymbol{e}}
\newcommand{\bfr}{\boldsymbol{r}}
\newcommand{\bft}{\boldsymbol{t}}
\newcommand{\bfy}{\boldsymbol{y}}
\newcommand{\bfu}{\boldsymbol{u}}
\newcommand{\bfi}{\boldsymbol{i}}
\newcommand{\bfo}{\boldsymbol{o}}
\newcommand{\bfp}{\boldsymbol{p}}
\newcommand{\bfq}{\boldsymbol{q}}
\newcommand{\bfs}{\boldsymbol{s}}
\newcommand{\bfd}{\boldsymbol{d}}
\newcommand{\bff}{\boldsymbol{f}}
\newcommand{\bfg}{\boldsymbol{g}}
\newcommand{\bfh}{\boldsymbol{h}}
\newcommand{\bfj}{\boldsymbol{j}}
\newcommand{\bfk}{\boldsymbol{k}}
\newcommand{\bfl}{\boldsymbol{l}}
\newcommand{\bfm}{\boldsymbol{m}}
\newcommand{\bfw}{\boldsymbol{w}}
\newcommand{\bfx}{\boldsymbol{x}}
\newcommand{\bfc}{\boldsymbol{c}}
\newcommand{\bfv}{\boldsymbol{v}}
\newcommand{\bfb}{\boldsymbol{b}}
\newcommand{\bfn}{\boldsymbol{n}}
\newcommand{\bfA}{\boldsymbol{A}}
\newcommand{\bfZ}{\boldsymbol{Z}}
\newcommand{\bfE}{\boldsymbol{E}}
\newcommand{\bfR}{\boldsymbol{R}}
\newcommand{\bfT}{\boldsymbol{T}}
\newcommand{\bfY}{\boldsymbol{Y}}
\newcommand{\bfU}{\boldsymbol{U}}
\newcommand{\bfI}{\boldsymbol{I}}
\newcommand{\bfO}{\boldsymbol{O}}
\newcommand{\bfP}{\boldsymbol{P}}
\newcommand{\bfQ}{\boldsymbol{Q}}
\newcommand{\bfS}{\boldsymbol{S}}
\newcommand{\bfD}{\boldsymbol{D}}
\newcommand{\bfF}{\boldsymbol{F}}
\newcommand{\bfG}{\boldsymbol{G}}
\newcommand{\bfH}{\boldsymbol{H}}
\newcommand{\bfJ}{\boldsymbol{J}}
\newcommand{\bfK}{\boldsymbol{K}}
\newcommand{\bfL}{\boldsymbol{L}}
\newcommand{\bfM}{\boldsymbol{M}}
\newcommand{\bfW}{\boldsymbol{W}}
\newcommand{\bfX}{\boldsymbol{X}}
\newcommand{\bfC}{\boldsymbol{C}}
\newcommand{\bfV}{\boldsymbol{V}}
\newcommand{\bfB}{\boldsymbol{B}}
\newcommand{\bfN}{\boldsymbol{N}}

\newcommand{\bfalpha}{\boldsymbol{\alpha}}
\newcommand{\bfnu}{\boldsymbol{\nu}}
\newcommand{\bfbeta}{\boldsymbol{\beta}}
\newcommand{\bfxi}{\boldsymbol{\xi}}
\newcommand{\bfXi}{\boldsymbol{\Xi}}
\newcommand{\bfgamma}{\boldsymbol{\gamma}}
\newcommand{\bfGamma}{\boldsymbol{\Gamma}}
\newcommand{\bfdelta}{\boldsymbol{\delta}}
\newcommand{\bfDelta}{\boldsymbol{\Delta}}
\newcommand{\bfpi}{\boldsymbol{\pi}}
\newcommand{\bfPi}{\boldsymbol{\Pi}}
\newcommand{\bfepsilon}{\boldsymbol{\epsilon}}
\newcommand{\bfvarepsilon}{\boldsymbol{\varepsilon}}
\newcommand{\bfrho}{\boldsymbol{\rho}}
\newcommand{\bfvarrho}{\boldsymbol{\varrho}}
\newcommand{\bfzeta}{\boldsymbol{\zeta}}
\newcommand{\bfsigma}{\boldsymbol{\sigma}}
\newcommand{\bfSigma}{\boldsymbol{\Sigma}}
\newcommand{\bfeta}{\boldsymbol{\eta}}
\newcommand{\bftau}{\boldsymbol{\tau}}
\newcommand{\bftheta}{\boldsymbol{\theta}}
\newcommand{\bfvartheta}{\boldsymbol{\vartheta}}
\newcommand{\bfTheta}{\boldsymbol{\Theta}}
\newcommand{\bfupsilon}{\boldsymbol{\upsilon}}
\newcommand{\bfUpsilon}{\boldsymbol{\Upsilon}}
\newcommand{\bfiota}{\boldsymbol{\iota}}
\newcommand{\bfphi}{\boldsymbol{\phi}}
\newcommand{\bfvarphi}{\boldsymbol{\varphi}}
\newcommand{\bfPhi}{\boldsymbol{\Phi}}
\newcommand{\bfkappa}{\boldsymbol{\kappa}}
\newcommand{\bfchi}{\boldsymbol{\chi}}
\newcommand{\bflambda}{\boldsymbol{\lambda}}
\newcommand{\bfLambda}{\boldsymbol{\Lambda}}
\newcommand{\bfpsi}{\boldsymbol{\psi}}
\newcommand{\bfPsi}{\boldsymbol{\Psi}}
\newcommand{\bfmu}{\boldsymbol{\mu}}
\newcommand{\bfomega}{\boldsymbol{\omega}}

\newcommand{\rma}{\mathrm{a}}
\newcommand{\rmz}{\mathrm{z}}
\newcommand{\rme}{\mathrm{e}}
\newcommand{\rmr}{\mathrm{r}}
\newcommand{\rmt}{\mathrm{t}}
\newcommand{\rmy}{\mathrm{y}}
\newcommand{\rmu}{\mathrm{u}}
\newcommand{\rmi}{\mathrm{i}}
\newcommand{\rmo}{\mathrm{o}}
\newcommand{\rmp}{\mathrm{p}}
\newcommand{\rmq}{\mathrm{q}}
\newcommand{\rms}{\mathrm{s}}
\newcommand{\rmd}{\mathrm{d}}
\newcommand{\rmf}{\mathrm{f}}
\newcommand{\rmg}{\mathrm{g}}
\newcommand{\rmh}{\mathrm{h}}
\newcommand{\rmj}{\mathrm{j}}
\newcommand{\rmk}{\mathrm{k}}
\newcommand{\rml}{\mathrm{l}}
\newcommand{\rmm}{\mathrm{m}}
\newcommand{\rmw}{\mathrm{w}}
\newcommand{\rmx}{\mathrm{x}}
\newcommand{\rmc}{\mathrm{c}}
\newcommand{\rmv}{\mathrm{v}}
\newcommand{\rmb}{\mathrm{b}}
\newcommand{\rmn}{\mathrm{n}}
\newcommand{\rmA}{\mathrm{A}}
\newcommand{\rmZ}{\mathrm{Z}}
\newcommand{\rmE}{\mathrm{E}}
\newcommand{\rmR}{\mathrm{R}}
\newcommand{\rmT}{\mathrm{T}}
\newcommand{\rmY}{\mathrm{Y}}
\newcommand{\rmU}{\mathrm{U}}
\newcommand{\rmI}{\mathrm{I}}
\newcommand{\rmO}{\mathrm{O}}
\newcommand{\rmP}{\mathrm{P}}
\newcommand{\rmQ}{\mathrm{Q}}
\newcommand{\rmS}{\mathrm{S}}
\newcommand{\rmD}{\mathrm{D}}
\newcommand{\rmF}{\mathrm{F}}
\newcommand{\rmG}{\mathrm{G}}
\newcommand{\rmH}{\mathrm{H}}
\newcommand{\rmJ}{\mathrm{J}}
\newcommand{\rmK}{\mathrm{K}}
\newcommand{\rmL}{\mathrm{L}}
\newcommand{\rmM}{\mathrm{M}}
\newcommand{\rmW}{\mathrm{W}}
\newcommand{\rmX}{\mathrm{X}}
\newcommand{\rmC}{\mathrm{C}}
\newcommand{\rmV}{\mathrm{V}}
\newcommand{\rmB}{\mathrm{B}}
\newcommand{\rmN}{\mathrm{N}}

\newcommand{\calA}{\mathcal{A}}
\newcommand{\calZ}{\mathcal{Z}}
\newcommand{\calE}{\mathcal{E}}
\newcommand{\calR}{\mathcal{R}}
\newcommand{\calT}{\mathcal{T}}
\newcommand{\calY}{\mathcal{Y}}
\newcommand{\calU}{\mathcal{U}}
\newcommand{\calI}{\mathcal{I}}
\newcommand{\calO}{\mathcal{O}}
\newcommand{\calP}{\mathcal{P}}
\newcommand{\calQ}{\mathcal{Q}}
\newcommand{\calS}{\mathcal{S}}
\newcommand{\calD}{\mathcal{D}}
\newcommand{\calF}{\mathcal{F}}
\newcommand{\calG}{\mathcal{G}}
\newcommand{\calH}{\mathcal{H}}
\newcommand{\calJ}{\mathcal{J}}
\newcommand{\calK}{\mathcal{K}}
\newcommand{\calL}{\mathcal{L}}
\newcommand{\calM}{\mathcal{M}}
\newcommand{\calW}{\mathcal{W}}
\newcommand{\calX}{\mathcal{X}}
\newcommand{\calC}{\mathcal{C}}
\newcommand{\calV}{\mathcal{V}}
\newcommand{\calB}{\mathcal{B}}
\newcommand{\calN}{\mathcal{N}}

\newcommand{\sfA}{\mathsf{A}}
\newcommand{\sfZ}{\mathsf{Z}}
\newcommand{\sfE}{\mathsf{E}}
\newcommand{\sfR}{\mathsf{R}}
\newcommand{\sfT}{\mathsf{T}}
\newcommand{\sfY}{\mathsf{Y}}
\newcommand{\sfU}{\mathsf{U}}
\newcommand{\sfI}{\mathsf{I}}
\newcommand{\sfO}{\mathsf{O}}
\newcommand{\sfP}{\mathsf{P}}
\newcommand{\sfQ}{\mathsf{Q}}
\newcommand{\sfS}{\mathsf{S}}
\newcommand{\sfD}{\mathsf{D}}
\newcommand{\sfF}{\mathsf{F}}
\newcommand{\sfG}{\mathsf{G}}
\newcommand{\sfH}{\mathsf{H}}
\newcommand{\sfJ}{\mathsf{J}}
\newcommand{\sfK}{\mathsf{K}}
\newcommand{\sfL}{\mathsf{L}}
\newcommand{\sfM}{\mathsf{M}}
\newcommand{\sfW}{\mathsf{W}}
\newcommand{\sfX}{\mathsf{X}}
\newcommand{\sfC}{\mathsf{C}}
\newcommand{\sfV}{\mathsf{V}}
\newcommand{\sfB}{\mathsf{B}}
\newcommand{\sfN}{\mathsf{N}}

\newcommand{\bbA}{\mathbb{A}}
\newcommand{\bbZ}{\mathbb{Z}}
\newcommand{\bbE}{\mathbb{E}}
\newcommand{\bbR}{\mathbb{R}}
\newcommand{\bbT}{\mathbb{T}}
\newcommand{\bbY}{\mathbb{Y}}
\newcommand{\bbU}{\mathbb{U}}
\newcommand{\bbI}{\mathbb{I}}
\newcommand{\bbO}{\mathbb{O}}
\newcommand{\bbP}{\mathbb{P}}
\newcommand{\bbQ}{\mathbb{Q}}
\newcommand{\bbS}{\mathbb{S}}
\newcommand{\bbD}{\mathbb{D}}
\newcommand{\bbF}{\mathbb{F}}
\newcommand{\bbG}{\mathbb{G}}
\newcommand{\bbH}{\mathbb{H}}
\newcommand{\bbJ}{\mathbb{J}}
\newcommand{\bbK}{\mathbb{K}}
\newcommand{\bbL}{\mathbb{L}}
\newcommand{\bbM}{\mathbb{M}}
\newcommand{\bbW}{\mathbb{W}}
\newcommand{\bbX}{\mathbb{X}}
\newcommand{\bbC}{\mathbb{C}}
\newcommand{\bbV}{\mathbb{V}}
\newcommand{\bbB}{\mathbb{B}}
\newcommand{\bbN}{\mathbb{N}}

\usepackage{graphicx}
\makeatletter
\newcommand*\bigcdot{\mathpalette\bigcdot@{.65}}
\newcommand*\bigcdot@[2]{\mathbin{\vcenter{\hbox{\scalebox{#2}{$\m@th#1\bullet$}}}}}
\makeatother
\newcommand{\dotp}{\bigcdot}

\newcommand{\KL}{{D_\mathrm{KL}}}

\usepackage{longtable}
\usepackage{tabularx}
\usepackage{threeparttable}
\usepackage{adjustbox}
\usepackage{caption}
\usepackage{booktabs}
\usepackage{algorithm}
\usepackage{multirow}
\usepackage{algpseudocode}
\usepackage{enumitem}
\usepackage{tikz}
\usepackage{pgfplots}
\usetikzlibrary{plotmarks}
\pgfplotsset{compat=1.18}

\usepackage[normalem]{ulem}
\usetikzlibrary{shapes.geometric, shapes.misc, backgrounds, fit}
\usetikzlibrary{arrows.meta,positioning,calc}
\usepackage{newtxtext,newtxmath}
\DeclareMathOperator*{\minimize}{minimize}
\DeclareMathOperator*{\maximize}{maximize}
\usepackage[figuresright]{rotating}
\usepackage{subfigure}
\usepackage{longtable}
\usepackage{siunitx}
\usepackage{colortbl}
\usepackage{multirow}
\definecolor{myblue}{RGB}{24,95,165}
\definecolor{myred}{RGB}{163,45,45}
\definecolor{rowgray}{gray}{0.94}
\usepackage{colortbl}
\usepackage{microtype}
\definecolor{purplered}{RGB}{180,50,140}
\definecolor{headgray}{HTML}{E7EBF0}
\definecolor{subgray}{HTML}{F4F6F8}
\definecolor{posgreen}{HTML}{1B5E20}
\definecolor{sectionblue}{HTML}{16324F}
\definecolor{captiongray}{HTML}{333A42}
\hypersetup{colorlinks=true,linkcolor=sectionblue,urlcolor=sectionblue,citecolor=sectionblue}
\renewcommand{\arraystretch}{1.12}
\newcommand{\Yue}[1]{{\color{purplered}#1}}
\newcommand{\Antoine}[1]{\textcolor{orange}{Antoine: #1}}
\newcommand{\inlAP}[1]{\begin{color}{blue}\texttt{A:#1}\end{color}}
\newcommand\PNN{PDFL\xspace}
\newcommand\OCNN{ODFL\xspace}

\newcommand\MMNL{{MixMNL}\xspace}

\newcommand\nb{n \xspace}
\newcommand\Products{\mathcal{N}\xspace}

\usepackage{xspace}
\newcommand\state{\boldsymbol{x}_t\xspace}
\newcommand\feature{\phi\xspace}
\newcommand\prob{prob\xspace}
\newcommand{\namedeqtag}[1]{\stepcounter{equation}\tag{\textsf{#1}}}

\newcommand\TimeIndItinerary{ItPri\xspace}
\newcommand\TimeDepItinerary{ItPri$^{(t)}$\xspace}

\newcommand\TimeIndApp{JoPri\xspace}
\newcommand\TimeDepApp{JoPri$^{(t)}$\xspace}

\newcommand\TimeIndUnified{JoComPri\xspace}
\newcommand\TimeDepUnified{JoComPri$^{(t)}$\xspace}

\newtheorem{result}{Finding}

\makeatletter
\@ifundefined{TITLE}{\newcommand{\TITLE}[1]{\title{#1}}}{}
\@ifundefined{RUNAUTHOR}{\newcommand{\RUNAUTHOR}[1]{}}{}
\@ifundefined{RUNTITLE}{\newcommand{\RUNTITLE}[1]{}}{}
\@ifundefined{ARTICLEAUTHORS}{\newcommand{\ARTICLEAUTHORS}[1]{\author{#1}}}{}
\@ifundefined{AUTHOR}{\newcommand{\AUTHOR}[1]{#1}}{}
\@ifundefined{AFF}{\newcommand{\AFF}[1]{\\{\small #1}}}{}
\@ifundefined{ABSTRACT}{\newcommand{\ABSTRACT}[1]{\begin{abstract}#1\end{abstract}}}{}
\@ifundefined{KEYWORDS}{\newcommand{\KEYWORDS}[1]{\noindent\textbf{Keywords: }#1\par\medskip}}{}
\@ifundefined{MANUSCRIPTNO}{\newcommand{\MANUSCRIPTNO}[1]{}}{}
\@ifundefined{HISTORY}{\newcommand{\HISTORY}[1]{}}{}
\@ifundefined{TheoremsNumberedThrough}{\newcommand{\TheoremsNumberedThrough}{}}{}
\@ifundefined{EquationsNumberedThrough}{\newcommand{\EquationsNumberedThrough}{}}{}
\@ifundefined{ECRepeatTheorems}{\newcommand{\ECRepeatTheorems}{}}{}
\@ifundefined{OneAndAHalfSpacedXI}{\newcommand{\OneAndAHalfSpacedXI}{}}{}
\@ifundefined{newblock}{\newcommand{\newblock}{}}{}
\@ifundefined{BIBand}{\newcommand{\BIBand}{and}}{}
\makeatother

\makeatletter
\renewenvironment{proof}[1][Proof]{%
  \par\noindent{\it #1.}\enspace\ignorespaces
}{%
  \hfill\qed\par
}
\makeatother

\TheoremsNumberedThrough
\EquationsNumberedThrough
\ECRepeatTheorems
\OneAndAHalfSpacedXI

\MANUSCRIPTNO{}

\makeatletter
\renewcommand{\theARTICLETOP}{}
\renewcommand{\theARTICLETOPLEFT}{}
\renewcommand{\theARTICLETOPRIGHT}{}
\makeatother
\RRHSecondLine{}
\LRHSecondLine{}
\hypersetup{hypertexnames=false,
  pdftitle={Scalable Dynamic Pricing of Substitutable Products through Structure-Guided Policy Learning},
  pdfauthor={Yue Su, Antoine Desir, Axel Parmentier}}

\begin{document}
\RUNAUTHOR{Su, D\'esir, and Parmentier}
\ARTICLEAUTHORS{
\AUTHOR{Yue Su}\AFF{Centre INRIA de l’Université de Lille, 59000 Lille, France
\\yue.su@inria.fr}
\AUTHOR{Antoine D\'esir}\AFF{INSEAD Business School, 77300 Fontainebleau, France.\\antoine.desir@insead.edu}
\AUTHOR{Axel Parmentier}\AFF{CERMICS, Ecole des Ponts ParisTech, 77420 Champs-sur-Marne, France\\axel.parmentier@enpc.fr}
}

\RUNTITLE{Structure-Guided Learning for Dynamic Pricing}
\TITLE{Scalable Dynamic Pricing of Substitutable Products through Structure-Guided Policy Learning}
\ABSTRACT{%
\textbf{\textit{Problem definition:}} We study dynamic pricing of substitutable products with finite, product-specific inventories. Customer substitution couples pricing decisions across products, while the inventory state makes exact dynamic programming intractable at realistic scale. \textbf{\textit{Methodology / results:}} We develop two MNL-guided policy-learning approaches that replace the dynamic program with a statistical mapping from inventory states to pricing decisions. The first learns prices directly, while the second learns inventory opportunity costs and converts them into prices using the optimal MNL pricing rule. Both policies are trained using decision-focused learning. We also propose an efficient method to generate anticipative customer-choice targets to train our policies.  \textbf{\textit{Managerial implications:}} We conduct an extensive numerical evaluation of our approaches. On small instances for which the optimal dynamic program can be computed, the learned policies achieve average optimality gaps below 0.4\%. On larger airline-motivated instances, they consistently improve on the tested revenue-management benchmarks. The comparison between the two architectures also highlights the role of model structure: using the MNL pricing characterization is particularly effective when demand is well described by MNL, while directly learning prices provides greater flexibility under heterogeneous mixed-MNL demand.
}
\KEYWORDS{Choice-based dynamic pricing; revenue management; decision-focused learning; customer choice; Fenchel--Young loss.}
\maketitle

\section{Introduction}

Firms with finite capacity increasingly use dynamic prices to manage demand over short selling horizons. In many applications, however, demand streams are not independent: customers choose among substitutable products after comparing their attributes, availability, and prices. A prominent example is an airline selling several departure options in the same origin-destination market. Each itinerary has limited remaining capacity, and changing the price of one itinerary shifts demand toward or away from every other option. A useful pricing policy must therefore value scarce inventory over time while accounting for substitution across products.

This is a challenging problem, even when the underlying choice model is multinomial-logit (MNL). Indeed, under this model, \citet{dong2009dynamic} show that the optimal  price  has a very elegant characterization: a product-specific inventory opportunity cost plus a common markup. However, this does not remove the computational bottleneck: the opportunity costs are marginal values, which need to be computed through dynamic programming (DP). The number of states grows multiplicatively with product capacities and with the number of products, so this traditional approach does not scale due to the curse of dimensionality. Fluid and deterministic approximations, decomposition, and approximate value functions alleviate this burden by simplifying the Bellman problem or its dynamic coupling \citep{gallego1997multiproduct,liu2008choice,zhang2009approximate,meissner2012network}. More recently, successes in machine learning have prompted applications of reinforcement learning to dynamic pricing and airline revenue management \citep{rana2014real,bondoux2020reinforcement,shihab2019autonomous}. These model-free approaches replace an explicit dynamic program with a parameterized policy that scales to large state spaces. However, they forego any structure of the problem.

In this paper, we propose a hybrid approach that leverages the expressiveness and scalability of machine learning models while incorporating some economic structure from the revenue management literature. Specifically, our core idea is to learn the pricing policy defined by the high-dimensional DP by replacing it with a simpler MNL-guided surrogate mapping. We use decision-focused learning (DFL) to fit this mapping. This hybrid design is in line with the recent call for combining operations research structure and interpretability with machine learning flexibility and data-driven capabilities, instead of treating them as substitutes \citep{wiberg2026synergizing}.

Our use of DFL differs from its canonical role in predict-then-optimize systems, where it typically trains a prediction through a downstream optimizer. Instead, here, we use DFL only to remove the need for the costly value-function evaluation.
This use creates a distinctive supervision problem. Ideally, one would imitate optimal prices, but producing those labels requires solving the same high-dimensional DP we seek to avoid. A common alternative in optimization-augmented policy learning is to imitate high-quality anticipative decisions, i.e., decisions computed with perfect knowledge of future realizations \citep{baty2024combinatorial,hoppe2026structured}.
Anticipative \emph{prices}, however, are poor labels in our setting: multiple price vectors can induce the same customer decision, and the prices assigned to products not selected in a perfect-information scenario are essentially arbitrary. To overcome this, we propose to imitate the anticipative \emph{customer decision} rather than the anticipative pricing decision.

\vspace{0.5cm}

\noindent \textbf{Main contributions.} Our main contributions are as follows.
\begin{itemize}[leftmargin=*]
    \item We propose and compare two MNL-guided policy designs. Our price-based DFL architecture (hereafter \PNN) learns prices through an MNL behavioral choice layer, whereas our opportunity-cost-based DFL architecture (hereafter \OCNN) adds an opportunity-cost parameterization, inspired by the optimal price structure under the MNL model.

    \item We show that these architectures can be trained through the DFL methodology by casting them as regularized linear optimization under an appropriate convex regularizer.

    \item A key design in our framework is to imitate anticipative choices instead of anticipative prices. When the underlying choice model is MNL, we show that this can be done effectively by solving a capacitated maximum-cost flow problem.
    Moreover, when the underlying choice model is a discrete mixture of MNL (hereafter \MMNL), generating anticipative choices can be reduced to the case of MNL. Our approach is therefore without loss of generality since \MMNL models can approximate any random utility model \citep{mcfadden2000mixed}.

    \item In our numerical study, we showcase the performance of our methods. On small instances where we can compute the optimal DP solution, our policies
    are within 0.4\% of the optimal revenue on average.
    In larger instances, we significantly improve on state-of-the-art baselines, achieving an average revenue improvement of 12.0\% across four large-scale experimental settings.
    We also identify when each architecture is most useful: \OCNN dominates when the underlying choice model is MNL. However, when the underlying model is a \MMNL, \PNN is preferable. Finally, we show how to embed our solution in a fully data-driven setting.
\end{itemize}

The remainder of the paper is organized as follows. Section~\ref{sec:literature_review} positions our work with respect to the literature. Section~\ref{sec:problem_description} formulates the parallel-inventory pricing problem. Section~\ref{sec:architecture} presents the \PNN and \OCNN policy mappings. Section~\ref{sec:learning_algorithm} casts them as DFL pipelines with structured losses and then develops the anticipative choice-label generation problem. Sections~\ref{sec:experimental_design} and~\ref{sec:numerical_results} describe the experimental design and numerical results.

\section{Literature Review} \label{sec:literature_review}

Our paper relates to four streams of work: choice-based dynamic pricing, scalable approximation in revenue management, decision-focused and optimization-augmented learning, and hybrid machine learning–operations research approaches.

\subsection{Choice-Based Dynamic Pricing with Finite Product Inventories}

Revenue management (RM) models can first be distinguished by their resource structure (a single resource, multiple product-specific resources, or a network of shared resources) and by whether demands are independent or coupled through customer choice \citep{gallego2019revenue}. A second distinction concerns the control: much of choice-based RM controls availability or assortments, whereas our decision is a continuous price vector and every in-stock product remains available. Our focal problem lies in the choice-based, dynamic-pricing part of this taxonomy. Each product has a dedicated inventory, and selling product $i$ consumes one unit of that inventory only. In network RM notation the consumption matrix is the identity. We accordingly use the term \emph{choice-based dynamic pricing with parallel product-specific inventories}. A true airline-network extension would allow an itinerary to consume capacity on several shared flight legs.

For independent demand, \citet{gallego1994optimal} establish foundational structural and asymptotic results for finite-inventory dynamic pricing with a single product. \citet{gallego1997multiproduct} study multiple products with general demand interactions and develop a deterministic approximation that provides a scalable upper bound. The closest structural ancestor is \citet{dong2009dynamic}, who study finite-horizon dynamic pricing of multiple substitutable, nonreplenishable products under MNL choice. Their key result decomposes each statewise optimal price into a product-specific marginal inventory value and a common markup. This leads to a DP algorithm that still suffers from the curse of dimensionality, but for which the inner optimization problem in the Bellman equation is tractable. This tractability result forms the foundation of our \OCNN architecture.
\citet{akccay2010joint} show that vertical differentiation can yield aggregation, monotonicity, and a polynomial algorithm, whereas horizontal differentiation retains high-dimensional coupling. \citet{suh2011dynamic} further show that marginal inventory values have cleaner monotonicity than the optimal prices themselves. Together, these results motivate opportunity costs as a more natural learning target than the full value function or a presumed monotone price path.

Static multiproduct pricing provides complementary structure. \citet{hanson1996optimizing} and \citet{li2011pricing} exploit transformations from prices to market shares under MNL and nested-logit demand, and \citet{gallego2014multiproduct} derive markup structure for more general nested-logit specifications. Richer heterogeneity makes even the static problem harder: \citet{vandegeer2022price} develop an approximation scheme for finite-mixture logit whose complexity is polynomial in products but exponential in the number of customer segments. This contrast motivates using a tractable MNL representation while explicitly testing policies in \MMNL environments.

The closest recent dynamic-pricing paper is \citet{goyal2026showall}. They study sequential customers, finite inventories of substitutable products, and a show-all constraint requiring every available product to carry a finite feasible price. They note that tractability had remained unresolved even under MNL, and develop asymptotic and approximation guarantees, including a large-inventory MNL policy. Their theory and our approach are complementary: they address the computational barrier through approximation and asymptotic analysis, whereas we amortize the dynamic optimization by learning a state-dependent policy from decision-oriented offline supervision. \citet{song2026dynamic} study a related dynamic pricing and inventory problem with periodic replenishment and backlogging. Replenishment changes the intertemporal scarcity mechanism, so that setting is less direct than the fixed-inventory show-all problem but demonstrates broader recent interest in structured multiproduct pricing.

\subsection{Scalable Approximations}
\label{subsec:scalable_rm}

Choice-based network RM provides an adjacent literature on scalable optimization. \citet{talluri2004revenue} formulate RM under general discrete choice, primarily with offer-set control. Choice-based linear programs and column generation are developed by \citet{liu2008choice} and \citet{bront2009column}. In approximate DP, \citet{zhang2009approximate} use an affine value-function approximation; \citet{meissner2012network} introduce a separable nonlinear approximation with inventory-sensitive bid prices; and \citet{meissner2013enhanced} strengthen choice-network relaxations. For dynamic pricing with shared network resources, \citet{ke2019approximate} formulate approximate linear programs. These methods approximate or bound the Bellman value function. Our setting has parallel rather than shared resources, and \OCNN instead uses proxies for the marginal continuation values required by the pricing decision; \PNN learns the price policy more directly.

Deterministic and fluid reoptimization address scalability differently: instead of approximating the Bellman value function, they repeatedly or adaptively resolve a tractable deterministic problem. In choice-based network RM with exogenous prices, \citet{jasin2012resolving} periodically resolve a certainty-equivalent deterministic linear program and implement probabilistic controls. For endogenous pricing, \citet{jasin2014reoptimization} develops a self-adjusting price control based on one initial fluid optimization, optionally supplemented by a limited number of reoptimizations.

\subsection{Decision-Focused and Optimization-Augmented Learning}

In its canonical predict-then-optimize use, DFL trains a predictive model according to the quality of the decisions induced by a downstream optimizer, rather than predictive accuracy alone \citep{donti2017task,agrawal2019differentiable,elmachtoub2022smart}. The optimizer usually remains the map from predictions to decisions; this is distinct from fully end-to-end policy learning, which removes the explicit intermediate model. Our targets use a different interface. We keep the known or estimated choice model fixed and use decision-oriented targets and differentiable structured layers to learn the state-to-decision mapping that otherwise requires dynamic optimization. Fenchel--Young losses provide a general way to train such structured prediction maps generated by regularized optimization \citep{blondel2020learning}.

Closest methodologically are optimization-augmented learning approaches that imitate high-quality anticipative solutions: \citet{baty2024combinatorial} apply this idea to dynamic vehicle routing, and \citet{hoppe2026structured} use structured losses and optimization layers for policy learning. Reviews by \citet{mandi2024decision} and \citet{schiffer2026combinatorial} place these methods within the broader predict-and-optimize and combinatorial-optimization-augmented learning literature.

\subsection{Machine Learning and Operations Research Hybrid Approaches}

Our work also contributes to the stream of papers trying to propose hybrid machine learning and operations research approaches \citep{wiberg2026synergizing}. Two representative studies are \citet{aouad2025representing}, which introduce RUMnets, neural choice models that can approximate any random-utility model while remaining consistent with random-utility maximization, and  \citet{xie2026deepstock}, which regularizes deep-RL inventory policies with classical base-stock structure. These papers illustrate how domain structure can guide either what a learning model represents or how a policy is learned. Our work applies a similar principle to dynamic pricing.

\section{Problem Formulation} \label{sec:problem_description}

We consider a finite-horizon dynamic pricing problem with substitutable products and nonreplenishable inventories.
Let $\nb$ denote the number of products, $\Products =\{1,\ldots,\nb\}$ the product set, and $0$ the no-purchase option. We consider a discrete-time problem with $T$ decision periods. For each product $i$, we denote its inventory at time $t$ by $I_{i,t}$. At the beginning of period $t$, we denote the system state as $\boldsymbol x_t=(t,\boldsymbol I_t)$, where $\boldsymbol I_t=(I_{1,t},\ldots,I_{\nb,t})$, and we define the available set $\mathcal{A}(\boldsymbol I_t)$ as
\[
\mathcal{A}(\boldsymbol I_t)=\{i\in \Products:I_{i,t}>0\}.
\]
We also denote the starting inventory of product $i$ by $c_i$, and assume that $c_i$ is a nonnegative integer. We thus have $\boldsymbol I_1=\boldsymbol c=(c_1,\ldots,c_{\nb})$.  For simplicity, we index time by customer-arrival epochs and assume that exactly one customer arrives in each period and chooses at most one product, corresponding to the normalized case \(\lambda=1\) of the arrival model in \cite{dong2009dynamic}.

Each product has its own dedicated inventory. A sale of product $i$ consumes one unit of inventory $i$ and no other resource. Thus the model can be represented as a special multi-resource RM problem with an identity consumption matrix, but there is no physical resource sharing across products. The difficulty comes from demand-side substitution: the price and availability of one product affect the purchase probabilities of all the others. In the airline setting, this is known as the parallel flight problem \citep{zhang2005revenue}, in which products are parallel itineraries or departure options in the same origin-destination market.

At state $\boldsymbol x_t$, the firm posts a nonnegative price $r_{i,t}$ for every available product. We write
\[
\mathcal R(\boldsymbol I_t)=\mathbb R_+^{|\mathcal{A}(\boldsymbol I_t)|}
\]
for the feasible prices of available products and use the convention $r_{i,t}=+\infty$ for an out-of-stock product. We assume \textit{show-all constraints}, under which all available products are always displayed at finite, feasible prices, as in \cite{goyal2026showall}.
Given $\boldsymbol r_t\in\mathcal R(\boldsymbol I_t)$, let
$ \mathbb{P}_t(i\;|\;\boldsymbol r_t, \boldsymbol I_t)$ denote the probability that an arriving customer chooses product $i$ upon seeing the available product set $\mathcal{A}(\boldsymbol I_t)$.
If the customer buys product $i$ at time $t$, the inventory evolves as $\boldsymbol I_{t+1}=\boldsymbol I_t-\boldsymbol e_i$ where $\boldsymbol e_i\in\{0,1\}^{\nb}$ denotes the $i$-th unit vector; if there is no purchase, then $\boldsymbol I_{t+1}  = \boldsymbol I_{t} $. By convention, we set $\boldsymbol{e}_0 = \boldsymbol{0}$.

Let $Y_t\in\{0,1,\ldots,\nb\}$ denote the realized choice and $V_t(\boldsymbol I_t)$ be the maximum expected revenue from period $t$ through $T$, with terminal value $V_{T+1}(\boldsymbol I_{T+1})=0$. The Bellman equation is
\begin{equation}
\begin{aligned}
V_t(\boldsymbol I_t)
=\max_{\substack{
\boldsymbol r_t\in\mathcal R(\boldsymbol I_t)}
}
\mathbb{E}_{Y_t\sim\mathbb P_t(\,\cdot\mid\boldsymbol r_t,\boldsymbol I_t)} \!\Big[ r_{Y_t,t}
+V_{t+1}\!\left(\boldsymbol I_t-\boldsymbol e_{Y_t}\right) %
\Big].
\end{aligned}
\namedeqtag{Bellman}
\label{eq:bellman_original}
\end{equation}
Following the standard discrete-difference notation, we further define the marginal continuation value by
\begin{equation*}
\Delta_t^i(\boldsymbol I_t)
:=V_{t+1}(\boldsymbol I_t)-V_{t+1}(\boldsymbol I_t-\boldsymbol e_i),
\qquad \forall i\in \mathcal{A}(\boldsymbol I_t),
\end{equation*}
and collect these inventory opportunity costs in
$\boldsymbol\Delta_t(\boldsymbol I_t)
=(\Delta_t^i(\boldsymbol I_t))_{i\in \mathcal{A}(\boldsymbol I_t)}$. Subtracting and adding $V_{t+1}(\boldsymbol I_t)$ on the right-hand side of Equation~\eqref{eq:bellman_original} gives
\[
V_t(\boldsymbol I_t)
=V_{t+1}(\boldsymbol I_t)
+\max_{\boldsymbol r_t\in\mathcal R(\boldsymbol I_t)}
\xi_t(\boldsymbol r_t, \boldsymbol I_t),
\]
where
\begin{equation*}
\xi_t(\boldsymbol r_t, \boldsymbol I_t)
=\sum_{i\in \mathcal{A}(\boldsymbol I_t)}
\mathbb{P}_t(i\;|\boldsymbol r_t, \boldsymbol I_t)
\left[r_{i,t}- \Delta_t^i(\boldsymbol I_t) \right].
\end{equation*}
Thus the optimal action at a state solves the one-period pricing problem
\begin{equation}
\boldsymbol r_t^*(\boldsymbol I_t)
\in\argmax_{\boldsymbol r_t\in\mathcal R(\boldsymbol I_t)}
\xi_t(\boldsymbol r_t, \boldsymbol I_t).
\namedeqtag{statewise pricing}
\label{eq:focal_optimization_prob}
\end{equation}

This reformulation isolates the source of the computational challenge. Exact DP stores or evaluates the high-dimensional function $V_t$ over $\prod_i(c_i+1)$ inventory vectors. Yet once the opportunity-cost vector $\boldsymbol\Delta_t(\boldsymbol I_t)$ is known, the remaining statewise pricing problem is low dimensional and, under MNL choice, has a tractable markup characterization. The Bellman function affects the current price only through these marginal values. This observation motivates the central design choice in Section~\ref{sec:architecture}: learn a decision-relevant policy representation, either prices directly or an opportunity-cost parameterization sufficient to recover prices, rather than reconstruct the full Bellman function.

\section{Structure-Guided Policy Architectures} \label{sec:architecture}

Our objective is to learn a deployable pricing policy
\[
\pi_w:\boldsymbol x_t=(t,\boldsymbol I_t)\longmapsto\boldsymbol r_t
\]
that maps the observable state to a feasible price vector via imitation learning.
To this end, we need labels.
We cannot use optimal decisions as labels as this would require solving the original DP.
Instead, we use the common approach in optimization-augmented policy learning that consists of producing anticipative labels computed with full knowledge of a sampled future \citep{baty2024combinatorial,jungelLearningBasedOnlineOptimization2025}. Applying this idea directly here means producing anticipative price labels. However, those prices are degenerate: many price vectors induce the same realized purchase, while the prices of unselected products can be set arbitrarily. Consequently, we propose instead to use the induced customer choice as the target. Note that this is related to inverse optimization \citep{ahuja2001inverse,chan2021inverse}: we learn prices that induce desired allocation decisions. The two architectures we propose therefore start with  a statistical predictor $\varphi_w$ parametrized by $w$. Since we train using a first-order method and automatic differentiation, any neural network can be used for $\varphi_w$. Then, in both architectures, the final layer is an MNL choice map $\bbQ$ that turns pricing decisions ${\bfr}$ into customer-choice probabilities $\bfy$. Note that we assume in the next sections that $\bbQ$ is known and given. We explain in Section~\ref{sec:experimental_design} how we compute it in practice either from $\mathbb{P}$, if known, or directly from data.
These choice probabilities can be evaluated against the anticipative customer choice targets. The difference between the two architectures lies in what \(\varphi_w\) predicts. In the price-based architecture (\PNN), we directly predict the pricing decisions, while in the opportunity-cost-based architecture (\OCNN) we predict opportunity costs that are then turned into pricing decisions through an MNL-guided optimization layer.  In this section, we introduce our two architectures. Section~\ref{sec:learning_algorithm} then shows how DFL can be used to learn these from data.

\subsection{Price-Based Architecture}
\label{sec:MNLlayer}
We introduce the MNL choice layer, which is a deterministic layer defined by some parameters $\boldsymbol a$ and $\beta$, as
\begin{equation}
\begin{aligned}
\bbQ(i\;| \boldsymbol r, \boldsymbol I)
&=\frac{\exp(a_i-\beta r_{i})}
{1+\sum_{j\in \mathcal{A}(\boldsymbol I)}\exp(a_j-\beta r_{j})},
&& \forall i\in \mathcal{A}(\boldsymbol I),\\
\bbQ(0\;| \boldsymbol r, \boldsymbol I)
&=\frac{1}
{1+\sum_{j\in \mathcal{A}(\boldsymbol I)}\exp(a_j-\beta r_{j})},
\end{aligned}
\label{eq:MNL-choice}
\end{equation}
with zero coordinates for unavailable products. We explain in the numerical section how we select $\boldsymbol a$ and $\beta$ to create this proxy MNL environment. However, recall that we are trying here to learn a pricing policy and as a result, the goal is only to learn $\varphi_w$.  In \PNN, we therefore predict choice $\hat{\boldsymbol y}_t$ from state $\boldsymbol x_t$ as
\begin{align*}
\hat{\boldsymbol y}_t &=  \bbQ(\hat{\boldsymbol r}_t, \boldsymbol I_t), \\
\hat{\boldsymbol r}_t &=\varphi_w (\boldsymbol x_t).
\end{align*}

The MNL choice map is smooth and imposes interpretable substitution structure. However, we emphasize that it is only a proxy or surrogate map that we use as a structural training device: the true choice probabilities $\mathbb P$ in the dynamic program need not be MNL, and the anticipative targets may be generated from a richer scenario model such as \MMNL. Moreover, note that we are ultimately interested in the pricing decision. Therefore, when using the policy, we only need the pricing decisions ${\bfr}$ and do not need to apply this final layer.
Figure~\ref{fig:two_proposed_architectures} illustrates the architecture.

\begin{figure}[t]
\centering
\resizebox{0.95\textwidth}{!}{%
\begin{tikzpicture}[
font=\sffamily\small,
data/.style={ellipse,minimum width=1.3cm,minimum height=0.9cm,align=center,draw=gray!60,fill=gray!10,thick},
block/.style={rounded corners,minimum width=2.0cm,minimum height=1.3cm,align=center,draw=blue!60!black,fill=blue!5,thick},
coblock/.style={chamfered rectangle,chamfered rectangle xsep=4pt,minimum width=2.0cm,minimum height=1.3cm,align=center,draw=orange!80!black,fill=orange!10,thick},
fwd/.style={->,>={Latex[width=2.2mm,length=2.2mm]},draw=blue!60!black,line width=1.3pt}
]

\node[font=\bfseries] at (-2.0,0) {PDFL};
\node[data] (s1) at (0,0) {State\\$\boldsymbol x_t$};
\node[block,right=0.9cm of s1] (m1) {Statistical Predictor\\$\varphi_w$};
\node[data,right=0.9cm of m1] (p1) {Prices\\$\widehat{\boldsymbol r}_t$};
\node[coblock,right=0.9cm of p1] (c1) {MNL choice\\mapping};
\node[data,right=0.9cm of c1] (y1) {Induced choices\\$\widehat{\boldsymbol y}_t$};
\draw[fwd] (s1)--(m1); \draw[fwd] (m1)--(p1); \draw[fwd] (p1)--(c1); \draw[fwd] (c1)--(y1);

\node[font=\bfseries] at (-2.0,-2.4) {ODFL};
\node[data] (s2) at (0,-2.4) {State\\$\boldsymbol x_t$};
\node[block,right=0.9cm of s2] (m2) {Statistical Predictor\\$\varphi_w$};
\node[data,right=0.9cm of m2] (o2) {Opp.~cost\\$\widehat{\boldsymbol\Delta}_t$};
\node[coblock,right=0.9cm of o2] (pm2) {Optimal MNL\\pricing map};
\node[data,right=0.9cm of pm2] (p2) {Prices\\$\widehat{\boldsymbol r}_t$};
\node[coblock,right=0.9cm of p2] (c2) {MNL choice\\mapping};
\node[data,right=0.9cm of c2] (y2) {Induced choices\\$\widehat{\boldsymbol y}_t$};
\draw[fwd] (s2)--(m2); \draw[fwd] (m2)--(o2); \draw[fwd] (o2)--(pm2); \draw[fwd] (pm2)--(p2); \draw[fwd] (p2)--(c2); \draw[fwd] (c2)--(y2);

\end{tikzpicture}}
\caption{\centering The two proposed architectures.}
\label{fig:two_proposed_architectures}
\end{figure}

\subsection{Opportunity-Cost-Based Architecture} \label{subsec:oc_architecture}

When the true choice model is MNL, the statewise pricing problem \eqref{eq:focal_optimization_prob} admits a closed-form characterization: the optimal price equals a product-specific inventory opportunity cost plus a common markup.

\begin{lemma}[\cite{dong2009dynamic}]
\label{lem:closed_form_mnl_pricing}
For any nonnegative opportunity-cost vector $\boldsymbol\Delta_t$, the solution of Problem~\eqref{eq:focal_optimization_prob} when the choice model is MNL satisfies
\begin{equation}
r_{i,t}^{*}
=\Delta_t^i+\frac{m_t}{\beta},
\qquad i\in \mathcal{A}(\boldsymbol I_t), \nonumber
\label{eq:closed_form_mnl_price}
\end{equation}
where $m_t>1$ is the unique solution of
\begin{equation}
(m_t-1)e^{m_t}
=\sum_{i\in \mathcal{A}(\boldsymbol I_t)}
\exp\!\left(a_i-\beta\Delta_t^i\right).
\label{eq:closed_form_mnl_markup}
\end{equation}
\end{lemma}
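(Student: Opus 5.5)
We plan to work directly with the first-order conditions of the statewise objective $\xi_t$ under the MNL model~\eqref{eq:MNL-choice}. Write $\mathcal{A}=\mathcal{A}(\boldsymbol I_t)$, $u_i=a_i-\beta r_{i,t}$, and $D=1+\sum_{j\in\mathcal{A}}e^{u_j}$. Then
\[
\xi_t(\boldsymbol r_t,\boldsymbol I_t)=\frac{\sum_{i\in\mathcal{A}}e^{u_i}\,(r_{i,t}-\Delta_t^i)}{D}.
\]
The key steps, in order, are as follows.

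\textbf{Step 1: first-order conditions.} Differentiate with respect to $r_{i,t}$, using $\partial q_i/\partial r_i=-\beta q_i(1-q_i)$ and $\partial q_j/\partial r_i=\beta q_iq_j$. This gives
\[
\frac{\partial\xi_t}{\partial r_{i,t}}=q_i\Big[1-\beta(r_{i,t}-\Delta_t^i)+\beta\,\xi_t\Big],
\]
where $q_i=e^{u_i}/D$. Since $q_i>0$, any interior stationary point satisfies
\[
r_{i,t}-\Delta_t^i=\frac{1}{\beta}+\xi_t=:\frac{m}{\beta}
\]
for all $i$. So the markup over opportunity cost is common across products.

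\textbf{Step 2: fixed-point equation for $m$.} Substitute $r_{i,t}=\Delta_t^i+m/\beta$ into the identity $\beta\xi_t=m-1$. With a common markup, $\xi_t=\frac{m}{\beta}\cdot\frac{S e^{-m}}{1+Se^{-m}}$, where $S=\sum_{i\in\mathcal{A}}\exp(a_i-\beta\Delta_t^i)$. This yields $(m-1)(1+Se^{-m})=mSe^{-m}$, which simplifies to $(m-1)e^m=S$, i.e.\ equation~\eqref{eq:closed_form_mnl_markup}.

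\textbf{Step 3: existence and uniqueness of $m_t>1$.} The function $g(m)=(m-1)e^m$ satisfies $g(1)=0<S$ and $g\to\infty$. It is also strictly increasing on $[1,\infty)$, since $g'(m)=me^m>0$. Hence there is a unique root $m_t>1$. Moreover, $g<0$ on $(-\infty,1)$, so there are no other roots at all. The resulting prices are $r_{i,t}=\Delta_t^i+m_t/\beta\ge 0$ because $\Delta_t^i\ge0$, so they are feasible in $\mathcal R(\boldsymbol I_t)$. This is where the nonnegativity hypothesis enters.

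\textbf{Step 4: global optimality, the main obstacle.} The objective is not concave in prices, so stationarity alone does not suffice. We first reduce the problem by optimizing over the markups $\mu_i=r_{i,t}-\Delta_t^i$. For fixed $\xi$-level, an argument in the style of Hanson--Martin shows that the unique stationary point is the global maximum. Alternatively, we change variables to market shares, $r_i=(a_i-\log(q_i/q_0))/\beta$. The objective then becomes
\[
\sum_i q_i\big(a_i-\beta\Delta_t^i-\log q_i+\log q_0\big)/\beta,
\]
which is concave in $(q_1,\dots,q_n)$ on the simplex interior, as is known for MNL revenue. A concave function has the stationary point from Step 1 as its unique global maximizer.

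We also need boundary behavior. As any $r_i\to\infty$, the corresponding share $q_i\to 0$. As prices approach $0$ (the nonnegativity boundary), the maximizer computed above is already interior. Concavity on the convex feasible share region then covers both cases. Verifying this concavity, namely the $q\log q$ terms plus $(\sum q_i)\log(1-\sum q_i)$, is the step I expect to require the most care.
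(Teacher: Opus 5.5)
Your argument is correct and uses the same key idea the paper points to for this cited result. In market-share variables, $\xi_t$ becomes $\frac{1}{\beta}\bigl(\boldsymbol\theta^\top\boldsymbol y-\Omega^{\mathrm{ODFL}}(\boldsymbol y)\bigr)$ with $\theta_i=a_i-\beta\Delta_t^i$, a strictly concave problem whose maximizer $\hat y_0=1/m_t$, $\hat y_i=e^{\theta_i-m_t}/m_t$ is exactly your Step~2 solution; the only difference is that you find the stationary point through price-space first-order conditions rather than in share space.

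The concavity step you flag is immediate: $-\sum_i q_i\log q_i$ is strictly concave, and $s\mapsto s\log(1-s)$ has second derivative $-\frac{1}{1-s}-\frac{1}{(1-s)^2}<0$. The cleanest way to handle $r\ge 0$ is to note that the unconstrained maximizer over all real prices is already feasible; equivalently, $r_i\ge 0$ is the linear constraint $q_i\le e^{a_i}q_0$.
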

We impose this very interpretable economic structure in our second architecture. As in \PNN, we impose this as a structural training device even if the true underlying model is not MNL. Specifically, we learn a statistical predictor $\varphi_w$, which is followed by the MNL pricing map \(h^{\mathrm{MNL}}\) and the deterministic MNL choice layer $\bbQ$,
such that we predict choice $\hat{\boldsymbol y}_t$ from state $\boldsymbol x_t$ as
\begin{align*}
\hat{\boldsymbol y}_t &=  \bbQ(\hat{\boldsymbol r}_t, \boldsymbol I_t), \\
\hat{\boldsymbol r}_t &= h_{\mathrm{MNL}} (\hat{\boldsymbol \Delta}_t ),\\
\hat{\boldsymbol \Delta}_t &=\varphi_w (\boldsymbol x_t),
\end{align*}
where $h_{\mathrm{MNL}}$ is the mapping given in Lemma~\ref{lem:closed_form_mnl_pricing}. Figure~\ref{fig:two_proposed_architectures} illustrates this mapping.

\subsection{Statistical Predictor and Residual Parameterization} \label{subsec:statistical predictor}

For the statistical predictor $\varphi_w$, we use a residual approach: we assume that we have access to an architecture-specific baseline from the literature, and use a generalized linear model (GLM) to correct it.
This approach has three advantages.
First, the residual approach is generic, allowing us to use the same GLM specification for both \PNN and \OCNN, as shown below. In contrast, a direct parameterization of $\varphi_w(\state)$ may require architecture-specific feature engineering because its interpretation and scale vary across architectures.
Second, it facilitates change management by correcting an existing RM policy. This residual approach provides an improved policy that does not deviate drastically from the reference policy. Third, from a practical perspective, using a residual approach makes learning more stable \citep{friedman2001greedy,he2016deep}.

More specifically, let \(\boldsymbol{\mathrm{Base}}(\state)\) denote the baseline vector, corresponding to baseline prices in \PNN and baseline opportunity costs in \OCNN.
Let \(\boldsymbol{\mathrm{Res}}_w(\state)\) denote the residual vector produced by the GLM, defined as
$$
\boldsymbol{\mathrm{Res}}_w(\state)
=
\left(
\mathrm{Res}_{i,w}(\state)
\right)_{i\in \mathcal{A}(\boldsymbol I_t)}.
$$
We consider two residual specifications:
\begin{enumerate}
\item Additive: $\varphi_w(\state) = \boldsymbol{\mathrm{Base}}(\state) + K_{\mathrm{add}}
\tanh\!\left(\boldsymbol{\mathrm{Res}}_w(\state)\right)$.
\item Multiplicative: $\varphi_w(\state) = \boldsymbol{\mathrm{Base}}(\state) \odot \exp\!\left(K_{\mathrm{mul}}\boldsymbol{\mathrm{Res}}_w(\state)\right)$.
\end{enumerate}
In the latter, we use $\odot$ to denote componentwise multiplication. The additive form learns a bounded correction around the baseline, whereas the multiplicative form learns a relative adjustment.
The hyperparameters $K_{\mathrm{add}}$ and $K_{\mathrm{mul}}$ control the magnitude of the residual correction and are selected based on validation performance. Implementation details are described in Section \ref{subsec:configurations}.

\section{Learning Algorithm} \label{sec:learning_algorithm}

The two architectures introduced in the previous section rely on a statistical predictor $\varphi_w$ parametrized by $w$.  In this section, we introduce the learning algorithm that identifies the parameter $w$. Note that in our work, we use a GLM for $\varphi_w$ but nothing we present is specific to this choice. In particular, the approach seamlessly extends if $\varphi_w$ is a neural network. We briefly introduce the key ingredients needed to use decision-focused learning (DFL), which, as mentioned in the Introduction, will be our approach. We then show how the two proposed architectures can be cast as DFL pipelines. All proofs in this section are relegated to Appendix \ref{appendix:proofs}.

\subsection{Decision-Focused Learning}
\begin{figure}[t]
\centering
\resizebox{0.96\textwidth}{!}{%
\begin{tikzpicture}[
    node distance=1.4cm and 1.4cm, font=\sffamily\small,
    data/.style={ellipse, minimum width=1.4cm, minimum height=1cm, align=center,
        draw=gray!60, fill=gray!10, thick},
    block/.style={rounded corners, minimum width=2.2cm, minimum height=1.5cm,
        align=center, draw=blue!60!black, fill=blue!5, thick},
    coblock/.style={chamfered rectangle, chamfered rectangle xsep=4pt,
        minimum width=2.2cm, minimum height=1.5cm, align=center,
        draw=orange!80!black, fill=orange!10, thick},
    fwd/.style={->, >={Latex[width=2.5mm,length=2.5mm]},
        draw=blue!60!black, line width=1.5pt},
    bwd/.style={->, >={Latex[width=2.5mm,length=2.5mm]},
        draw=red!70!black, text=red!70!black, dashed,
        line width=1.5pt, font=\scriptsize},
    sideInput/.style={draw=gray!40, fill=white, dashed,
        align=center, font=\footnotesize}
]

\node[data] (input) {State \\ $\state\in\mathcal X$};
\node[block, right=of input] (nn) {Statistical Predictor \\ $\varphi_w(\state)$};
\node[data, right=of nn, text width=1.6cm] (theta) {Params $\boldsymbol{\theta}_t$};
\node[coblock, right=of theta] (co) {\textbf{Optimization Layer} \\[-1mm]
    $\displaystyle \boldsymbol y_t\in
    \argmax_{\boldsymbol y\in\mathcal Y(\state)}
    \boldsymbol\theta_t^\top \bfy - \Omega(\bfy)$};
\node[data, right=of co] (output) {Output $\boldsymbol y_t$};
\node[block, right=of output, draw=red!60!black, fill=red!5] (loss)
    {Loss Evaluation \\ $\mathcal L(\boldsymbol y_t,\text{Target})$};
\node[sideInput, above=of loss, yshift=-.5cm] (target) {Target decisions};

\draw[fwd] (input)--(nn);
\draw[fwd] (nn)--node[above,font=\scriptsize,text=blue!60!black]{$\varphi_w(\state)$}(theta);
\draw[fwd] (theta)--(co);
\draw[fwd] (co)--(output);
\draw[fwd] (output)--(loss);
\draw[->, gray, thick] (target)--(loss);

\coordinate (bwdSE) at ($(co.south east)+(0,-1.3cm)$);
\coordinate (bwdSW) at ($(co.south west)+(0,-1.3cm)$);
\coordinate (bwdCorner) at (loss.south |- bwdSE);

\draw[bwd, rounded corners=10pt] (loss.south)--(bwdCorner)
    --node[near start,below,yshift=-2pt]{Backward $\nabla_y\mathcal L$}(bwdSE);
\draw[bwd, line width=2.5pt] (bwdSE)
    --node[midway,below,yshift=-4pt,align=center,font=\bfseries\scriptsize]
    {Differentiation \\ through Solver}(bwdSW);
\draw[bwd, rounded corners=10pt] (bwdSW)-|
    node[near end,below,yshift=-20pt,xshift=10pt]
    {Gradient $\nabla_w\mathcal L$}(nn.south);

\end{tikzpicture}
}
\caption{General DFL architecture. A statistical predictor parameterizes a surrogate optimization problem. We propagate gradients backwards through the optimization layer, enabling end-to-end training of the policy $\pi_w$ to minimize a downstream decision loss.}
\label{fig:general_neural_architecture}
\end{figure}

In DFL, a statistical predictor $\varphi_w$ parametrized by $w$ turns a state  $\bfx_t$ into a vector of parameters $\bftheta_t$. This vector is then used in the objective function of an optimization layer. The decision, which is the output of the optimization layer, is evaluated against a target through a loss function $\calL(\cdot, \cdot)$. Importantly, to apply DFL, the key is that the optimization layer can be written as a regularized linear optimization
\begin{equation}\label{eq:regularized_combinatorial_optimization_oracle_polytope}
\begin{aligned}
    \max \ & \  \bftheta^\top \bfy - \Omega(\bfy)\\
        & \boldsymbol y \in \mathcal{Y}
\end{aligned}
\end{equation}
for some convex feasible set $\mathcal{Y}$ and a strictly convex regularization function $\Omega(\cdot)$. Figure~\ref{fig:general_neural_architecture} illustrates a generic DFL architecture for dynamic settings, in which a single statistical predictor $\varphi_w$ maps any possible state $\boldsymbol{x}_t$ to a prediction used for decision making. Given a training set of state-label pairs $\{(\boldsymbol{x}_{t_i}^i,\boldsymbol{y}_{t_i}^i)\}_{i=1}^{S}$, where each pair corresponds to a system state at time step $t_i$, we train the predictor by solving the following supervised learning problem:
\begin{equation}
\label{eq:supervisedLearningWithFYL}
\min_w \frac{1}{S}\sum_{i=1}^{S}
\calL\left(\varphi_w(\boldsymbol{x}_{t_i}^i),\boldsymbol{y}_{t_i}^i\right).
\nonumber
\end{equation}

What makes this approach particularly appealing is that when the loss function is chosen appropriately with respect to the regularization function $\Omega(\cdot)$, the parameter $w$ can be learned efficiently using a first-order approach \citep{blondel2020learning,baty2024combinatorial}.

\paragraph{Fenchel--Young loss.} Inspecting Figure~\ref{fig:general_neural_architecture}, note that the optimization layer is a deterministic layer that maps $\boldsymbol \theta$ to a decision $\boldsymbol y$. This is similar to a softmax layer that transforms a vector of scores into a probability vector. In fact, a softmax layer is a special case of an optimization layer for which the mapping admits a closed form \citep{blondel2020learning}. Because there are no parameters to learn in this layer, we define the Fenchel--Young loss directly in the $\boldsymbol \theta$ space. Specifically, for some parameter $\boldsymbol \theta$ and target output $\boldsymbol{\bar{y}}$, the Fenchel--Young loss is defined as
\begin{equation}\label{eq:FYL_general_form}
    \begin{aligned}
        \mathcal{L}^{\text{FY}}_{\Omega}(\boldsymbol{\theta}, \boldsymbol{\bar{y}}) &= \max_{y \in \mathcal{Y}} \Big(\boldsymbol \theta^\top \boldsymbol y - \Omega(\boldsymbol y)\Big) - \Big(\boldsymbol \theta^\top \bar{\boldsymbol y} - \Omega(\bar{\boldsymbol y}) \Big)   \\
        &= \Omega^*(\boldsymbol \theta) + \Omega(\bar{\boldsymbol y}) - \boldsymbol \theta^\top \bar{\boldsymbol y}, \nonumber
    \end{aligned}
\end{equation}
where
\[
\Omega^*(\boldsymbol{\theta}) = \max_{\boldsymbol y \in \mathcal{Y}} \{\boldsymbol{\theta}^\top \boldsymbol{y} - \Omega(\boldsymbol{y})\}
\]
is the Fenchel conjugate of $\Omega$. By definition, note that the loss is chosen so that it is small when the optimal objective value of \eqref{eq:regularized_combinatorial_optimization_oracle_polytope} is close to the objective value attained at target $\bar{\boldsymbol y}$. For that reason, this loss is also sometimes referred to as a decision-aware loss. As highlighted by the index on the loss, the Fenchel--Young loss is intrinsically built for the specific regularization function $\Omega$. Furthermore, $\mathcal{L}^{\text{FY}}_{\Omega}(\cdot, \cdot)$ has desirable properties \citep{blondel2020learning}: it is non-negative, smooth, convex, and differentiable in $\boldsymbol{\theta}$ with gradient that takes the following form:
\begin{align}
\label{eq:gradien_FYL}
\nabla_{\boldsymbol{\theta}}\mathcal{L}^{\mathrm{FY}}_{\Omega}(\boldsymbol{\theta}, \boldsymbol{\bar{y}}) =
\nabla \Omega^*(\boldsymbol{\theta})- \boldsymbol{\bar{y}} \nonumber.
\end{align}
With these primitives in place, we next show how our two architectures can be cast as DFL. In other words, we need to define in each case what our parameter $\boldsymbol \theta$ is and what our regularized optimization layer is. Once this is done, we can leverage the algorithmic machinery developed for DFL. We further refer the reader to \cite{schiffer2026combinatorial} for a recent survey on DFL.

\subsection{Price-Based Architecture as DFL} \label{subsec:FY for PNN}

To show that \PNN can be learned using DFL, we define, for a given parameter $\boldsymbol \theta$, the following regularized optimization problem.
\begin{equation} \label{eq:optMNL}
\begin{aligned}
\max  \ & \ \boldsymbol \theta^T \boldsymbol{y} - \Omega^{\mathrm{\PNN}}(\boldsymbol{y}), \\
 & \boldsymbol{y} \in \mathcal{C}(\boldsymbol{I}),
\end{aligned}
\end{equation}
where $\mathcal{C}(\boldsymbol{I})$ is the probability simplex restricted to the available products in $\boldsymbol{I}$ defined as
\begin{equation}\label{eq:simplex}
\mathcal{C}(\boldsymbol I) = \left \{ \boldsymbol{y} \in\mathbb{R}_{\geq 0}^{\nb+1} \left |  \sum_{i\in \mathcal{A}(\boldsymbol{I})\cup \{0\}} y_i = 1,\quad y_i =0 \text{ for } i \not\in \mathcal{A}(\boldsymbol{I}) \cup \{0\} \right. \right \}, \nonumber
\end{equation}
and $\Omega^{\mathrm{\PNN}}(\cdot)$ is the negative Shannon entropy,
\begin{equation*}
    \Omega^{\mathrm{\PNN}}(\boldsymbol{y}) = \sum_{i=0}^{\nb} y_i\log y_i.
\end{equation*}
The vector of MNL choice probabilities is an optimal solution to the regularized optimization problem~\eqref{eq:optMNL}. This is a standard result   \citep{luce1959individual,mcfadden1972conditional,anderson1992discrete}, which we repeat here without proof.
\begin{lemma}
For a given $\boldsymbol \theta$, the vector of MNL choice probabilities defined as $y^*_i = \frac{\exp {\theta_i}}{(\sum_{j \in \mathcal{A}(\boldsymbol{I}) \cup \{0\}} \exp {\theta_j})}$ for $i \in \mathcal{A}(\boldsymbol{I}) \cup \{0\}$ and 0 otherwise, is an optimal solution to the regularized optimization problem \eqref{eq:optMNL}.
\end{lemma}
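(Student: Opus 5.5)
We prove that the softmax vector $\boldsymbol y^*$ solves \eqref{eq:optMNL} by a direct convex-duality argument, namely the Gibbs variational principle. Write $S=\mathcal{A}(\boldsymbol I)\cup\{0\}$. On $\mathcal{C}(\boldsymbol I)$ all coordinates outside $S$ are forced to zero. With the convention $0\log 0=0$, these coordinates contribute nothing to $\boldsymbol\theta^\top\boldsymbol y$ or to $\Omega^{\mathrm{\PNN}}$. The problem therefore reduces to maximizing
\[
F(\boldsymbol y)=\sum_{i\in S}\theta_i y_i-\sum_{i\in S} y_i\log y_i
\]
over the standard simplex $\Delta_S$ indexed by $S$. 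This function is continuous on a compact set and strictly concave, because negative entropy is strictly convex. Hence a unique maximizer exists, and it suffices to show that $\boldsymbol y^*$ attains the maximum value.

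Let $Z=\sum_{j\in S}e^{\theta_j}$, so that $y^*_i=e^{\theta_i}/Z>0$ for $i\in S$. For any $\boldsymbol y\in\Delta_S$, substitute $\theta_i=\log y^*_i+\log Z$ into $F$. Since $\sum_{i\in S} y_i=1$, this gives the identity
\[
F(\boldsymbol y)=\log Z-\sum_{i\in S} y_i\log\frac{y_i}{y^*_i}=\log Z-\KL(\boldsymbol y\,\|\,\boldsymbol y^*).
\]
Terms with $y_i=0$ vanish by the convention above. The Kullback--Leibler divergence is nonnegative, which follows from Jensen's inequality applied to the concave function $\log$: $-\KL(\boldsymbol y\|\boldsymbol y^*)=\sum_{i:y_i>0} y_i\log(y^*_i/y_i)\le\log\sum_{i:y_i>0} y^*_i\le 0$. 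It equals zero exactly when $\boldsymbol y=\boldsymbol y^*$. Hence $F(\boldsymbol y)\le\log Z=F(\boldsymbol y^*)$, so $\boldsymbol y^*$ is the (unique) optimal solution. As a byproduct, $\Omega^*(\boldsymbol\theta)=\log\sum_{j\in S}e^{\theta_j}$.

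An alternative route is a KKT argument: form the Lagrangian with a multiplier for $\sum_i y_i=1$ and observe that $\partial F/\partial y_i=\theta_i-\log y_i-1\to+\infty$ as $y_i\downarrow0$. The main subtlety in that route is ruling out boundary optima, since the entropy is not differentiable at $y_i=0$; the infinite derivative shows the optimum is interior, and stationarity then gives $y_i\propto e^{\theta_i}$.

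The KL identity avoids this boundary issue entirely, so that is the route we would write out. The only care needed is in handling the zero coordinates via the convention $0\log0=0$, both for products outside $\mathcal{A}(\boldsymbol I)$ and for coordinates of $\boldsymbol y$ in $S$ that are zero.
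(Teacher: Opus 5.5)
Your proof is correct. There is no proof in the paper to compare it against: the lemma is stated as a standard discrete-choice fact and explicitly repeated ``without proof.''

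Your identity $F(\boldsymbol y)=\log Z-\KL(\boldsymbol y\,\|\,\boldsymbol y^*)$ on the simplex indexed by $S=\mathcal A(\boldsymbol I)\cup\{0\}$ gives three things at once: optimality, uniqueness, and the value $\Omega^*(\boldsymbol\theta)=\log\sum_{j\in S}e^{\theta_j}$. It also handles zero coordinates, both the forced zeros outside $S$ and boundary points of the simplex, without the separate interiority argument that a KKT derivation needs. Your compactness and strict-concavity remark is harmless but redundant, since the identity already exhibits $\boldsymbol y^*$ as the maximizer.

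Your route also justifies the claims the paper makes immediately after the lemma, which it only asserts. These are that $\Omega^*$ is the log-partition function, that the Fenchel--Young loss equals the MNL negative log-likelihood, and that its gradient is $\hat{\boldsymbol y}-\bar{\boldsymbol y}$.

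The paper's nearest analogue, the proof of Proposition~\ref{prop:ocnn-conjugate} for the \OCNN regularizer, takes a different approach. It only evaluates the objective at a candidate maximizer whose optimality is inherited implicitly from Lemma~\ref{lem:closed_form_mnl_pricing} rather than verified. Your direct variational argument is the more self-contained of the two styles.
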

As a result, we can cast our \PNN through DFL by defining, for a state $\boldsymbol{x}_t$ and price vector
$\hat{\boldsymbol{r}}_t=\varphi_w(\boldsymbol{x}_t)$, the following parameter vector $\boldsymbol \theta_{\hat{\boldsymbol{r}}_t} = (\theta_{0},\; \ldots,\; \theta_{\nb})$ where
\[
\theta_{i}
=
\begin{cases}
a_i-\beta \hat{r}_{i,t},
& \text{if } i\in \mathcal{A}(\boldsymbol I_t),\\
0,
& \text{otherwise}.
\end{cases}
\]
Consequently, as illustrated in Figure~\ref{fig:new}, we can learn \PNN using DFL. In this case, the corresponding Fenchel conjugate $\Omega^*$ corresponds to the log-partition function \citep{wainwrightGraphicalModelsExponential2007} $\Omega^*(\boldsymbol{\theta})=\log\sum_{i \in \mathcal A(\boldsymbol I_t)\cup\{0\}} \exp(\theta_i)$.
Moreover, the Fenchel--Young loss coincides
with the negative log-likelihood associated with the MNL, and the gradient of the Fenchel--Young loss is given by
\[
\frac{\partial \mathcal{L}^{\mathrm{FY}}_{\Omega}}{\partial \theta_i}
    =
    \hat y_{i}-\bar y_i,
    \qquad i\in \mathcal{A}(\boldsymbol I_t),
\]
where $\hat{\boldsymbol{y}}$ is the solution to \eqref{eq:optMNL}. The gradient has a very intuitive interpretation: it is equal to the difference between the choice induced by the model $\hat{\boldsymbol{y}}$ and the target choice $\bar{\boldsymbol{y}}$.

\begin{figure}[t]
\centering
\resizebox{0.9\textwidth}{!}{%
\begin{tikzpicture}[node distance=1.2cm and 1.2cm,font=\sffamily\small,
data/.style={ellipse,minimum width=1.4cm,minimum height=1cm,align=center,draw=gray!60,fill=gray!10,thick},
block/.style={rounded corners,minimum width=2.2cm,minimum height=1.5cm,align=center,draw=blue!60!black,fill=blue!5,thick},
coblock/.style={chamfered rectangle,chamfered rectangle xsep=5pt,minimum width=5.6cm,minimum height=2.2cm,align=center,draw=orange!80!black,fill=orange!10,thick},
param/.style={rectangle,rounded corners=1pt,minimum width=1.9cm,minimum height=0.65cm,align=center,draw=gray!55,dashed,fill=white,thick},
fwd/.style={->,>={Latex[width=2.5mm,length=2.5mm]},draw=blue!60!black,line width=1.5pt},
aux/.style={->,>={Latex[width=2mm,length=2mm]},draw=gray!65,line width=1.1pt}]

\node[data] (state1) {State \\ $\boldsymbol x_t$};
\node[block,right=1.2cm of state1] (model1) {Statistical Predictor \\ $\varphi_w$};
\node[data,right=1.2cm of model1] (price1) {Prices \\ $\widehat{\boldsymbol r}_t$};
\node[coblock,right=1.2cm of price1,text width=5.1cm] (map1) {\textbf{Regularized optimization layer} \\[1.5mm] $\displaystyle \max_{\boldsymbol y\in\mathcal{C}(\boldsymbol I_t)} {(\boldsymbol a-\beta\hat{\boldsymbol r}_{t})}^{\top}\boldsymbol y-\Omega^{\mathrm{\PNN}}(\boldsymbol y)$};
\node[param,above=0.55cm of map1] (param1) {$\boldsymbol a,\ \beta$ (surrogate MNL parameters)};
\node[data,right=1.2cm of map1] (choice1) {Induced choices \\ $\widehat{\boldsymbol y}_t$};
\node[font=\bfseries,left=0.9cm of state1] {PDFL};
\draw[fwd] (state1)--(model1); \draw[fwd] (model1)--(price1); \draw[fwd] (price1)--(map1); \draw[fwd] (map1)--(choice1); \draw[aux] (param1.south)--(map1.north);

\node[data] (state2) at (0,-4.1) {State \\ $\boldsymbol x_t$};
\node[block] (model2) at (model1 |- state2) {Statistical Predictor \\ $\varphi_w$};
\node[data] (opp2) at (price1 |- state2) {Opp.~cost \\ $\widehat{\boldsymbol\Delta}_t$};
\node[coblock,text width=5.1cm] (map2) at (map1 |- state2) {\textbf{Regularized optimization layer} \\[1.5mm] $\displaystyle \max_{\boldsymbol y\in\mathcal{C}(\boldsymbol I_t)} (\boldsymbol a-\beta\hat{\boldsymbol\Delta}_{t})^{\top}\boldsymbol y-\Omega^{\mathrm{\OCNN}}(\boldsymbol y)$};
\node[param,above=0.55cm of map2] (param2) {$\boldsymbol a,\ \beta$ (surrogate MNL parameters)};
\node[data] (choice2) at (choice1 |- state2) {Induced choices \\ $\widehat{\boldsymbol y}_t$};
\node[font=\bfseries,left=0.9cm of state2] {ODFL};
\draw[fwd] (state2)--(model2); \draw[fwd] (model2)--(opp2); \draw[fwd] (opp2)--(map2); \draw[fwd] (map2)--(choice2); \draw[aux] (param2.south)--(map2.north);

\end{tikzpicture}}
\caption{\centering The two proposed DFL architectures.}
\label{fig:new}
\end{figure}

\subsection{Opportunity-Cost-Based Architecture as DFL} \label{subsec:FY loss for OCNN}
To cast \OCNN using a regularized optimization layer, we leverage Lemma~\ref{lem:closed_form_mnl_pricing}. Specifically, the key idea in the proof of that result is to look at the pricing problem as an optimization problem over choice probabilities only. Indeed, one can use the MNL choice probabilities \eqref{eq:MNL-choice} to express the prices as a function of choice probabilities. As a result, for a given vector of opportunity costs $\hat{\boldsymbol\Delta}_t$, the optimal choice probabilities can be obtained by solving the following optimization problem
\begin{equation} \label{eq:OCNNopt}
\begin{aligned}
\max  \ & \ {\boldsymbol{\theta}}_{\hat{\boldsymbol\Delta}_t}^{\top}\boldsymbol{y} - \Omega^{\mathrm{\OCNN}}(\boldsymbol y), \\
 & \boldsymbol{y} \in \mathcal{C}(\boldsymbol{I}_t),
\end{aligned}
\end{equation}
where ${\boldsymbol{\theta}}_{\hat{\boldsymbol\Delta}_t} = (\theta_{0},\; \ldots,\; \theta_{\nb})$ is defined by
\begin{equation}
\label{eq:theta_opportunity_cost}
\theta_{i}
=
\begin{cases}
a_i-\beta \hat{\Delta}_{t}^{i},
& \text{if } i\in \mathcal{A}(\boldsymbol I_t),\\
0,
& \text{otherwise}.
\end{cases}
\end{equation}
and
\[
\Omega^{\mathrm{\OCNN}}(\boldsymbol y)
=
\sum_{i=0}^{\nb}
y_i\bigl(\log y_i-\log y_0\bigr).
\]
Figure~\ref{fig:new} illustrates how we interpret \OCNN as a regularized optimization layer. In this case, the regularizer captures the log-ratio between each product's purchase probability and the no-purchase probability. Unlike the regularization function for \PNN, which has received attention in the literature, $\Omega^{\mathrm{\OCNN}}$ is specifically designed for this problem. We next show that it is a valid regularization function.
\begin{proposition}[Properties of $\Omega^{\mathrm{\OCNN}}$]\label{prop:ocnn_regularizer_properties}
    $\Omega^{\mathrm{\OCNN}}(\cdot)$ is smooth and strictly convex on the relative interior of $\mathcal{C}(\boldsymbol I_t)$ and is of Legendre type.
\end{proposition}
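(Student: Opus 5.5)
The plan is to reduce $\Omega^{\mathrm{\OCNN}}$ to a function of the free coordinates of $\mathcal{C}(\boldsymbol I_t)$. Then compute its Hessian to get smoothness and strict convexity. Finally check the two defining conditions of a Legendre-type function in the sense of Rockafellar: essential strict convexity and essential smoothness, i.e.\ the gradient norm blows up at the relative boundary.

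\textbf{Reduction.} Write $A=\mathcal{A}(\boldsymbol I_t)$ and let $u=(y_i)_{i\in A}$, with $s=\sum_{i\in A}u_i$. On $\mathcal{C}(\boldsymbol I_t)$ we have $y_0=1-s$, and $y_i=0$ for $i\notin A\cup\{0\}$. Those coordinates contribute nothing under $0\log 0=0$, and the $i=0$ term vanishes identically. Hence
\[
\Omega^{\mathrm{\OCNN}}(\boldsymbol y)=\sum_{i\in A}u_i\log u_i+g(s),\qquad g(s)=-s\log(1-s),
\]
defined on the open set $U=\{u>0,\ s<1\}$, which is the relative interior of $\mathcal{C}(\boldsymbol I_t)$ in these coordinates. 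This expression is real-analytic on $U$, so it is smooth there.

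\textbf{Strict convexity.} A direct computation gives $g'(s)=-\log(1-s)+\frac{s}{1-s}$ and $g''(s)=\frac{1}{1-s}+\frac{1}{(1-s)^2}>0$. The Hessian is therefore
\[
\nabla^2\Omega^{\mathrm{\OCNN}}=\mathrm{diag}(1/u_i)_{i\in A}+g''(s)\,\mathbf 1\mathbf 1^\top ,
\]
which is a positive definite diagonal matrix plus a positive semidefinite rank-one term. It is thus positive definite on $U$, which gives strict convexity on the relative interior.

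\textbf{Extension to a closed convex function.} To speak of Legendre type, I extend $\Omega^{\mathrm{\OCNN}}$ to all of $\mathbb{R}^{|A|}$. Set it equal to the lower semicontinuous limit on $\mathrm{cl}\,U$, namely $u\log u$ with $0\log 0=0$, and $+\infty$ whenever $s=1$, since $g(s)\to+\infty$ as $s\to 1$. Set it to $+\infty$ outside $\mathrm{cl}\,U$. This yields a proper closed convex function whose domain has interior $U$. Strict convexity on $U$ is then exactly essential strict convexity.

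\textbf{Essential smoothness (main obstacle).} The delicate step is showing that $\|\nabla\Omega^{\mathrm{\OCNN}}(u^k)\|\to\infty$ for every sequence $u^k\in U$ converging to a point of $\partial U$. The partial derivatives are
\[
\partial_{u_i}\Omega^{\mathrm{\OCNN}}=\log u_i+1+g'(s).
\]
I split into two cases.
\begin{itemize}
\item If $s^k\not\to 1$ along a subsequence, then $g'(s^k)$ stays bounded. Since the limit lies on $\partial U$, some $u_i^k\to 0$, so $\log u_i^k\to-\infty$ and that component diverges.
\item If $s^k\to1$, then $g'(s^k)\to+\infty$. Because $\sum_i u_i^k\to1$, some index $j$ has $u_j^k\ge 1/|A|$ along a subsequence. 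For that $j$, $\log u_j^k$ is bounded below, so $\partial_{u_j}\Omega^{\mathrm{\OCNN}}\to+\infty$.
\end{itemize}
This covers the potentially conflicting situation where some $u_i\to0$ while $y_0\to0$ at the same time. Combining the cases, the gradient norm diverges along every such sequence, so $\Omega^{\mathrm{\OCNN}}$ is essentially smooth and hence of Legendre type.

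As a consequence (Rockafellar, Thm.~26.5), $\nabla\Omega^{\mathrm{\OCNN}}$ is a bijection onto the interior of the domain of the conjugate, and the conjugate is differentiable. This is precisely what the Fenchel--Young machinery requires.
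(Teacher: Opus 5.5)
Your proof is correct. It follows the same overall strategy as the paper's: a Hessian computation for strict convexity, then Rockafellar's Legendre conditions via blow-up of the gradient at the boundary. The difference is that you work in the reduced coordinates $u=(y_i)_{i\in A}$ with $y_0=1-s$, whereas the paper works in the full coordinates $(y_0,\dots,y_n)$. That change is what makes your argument cleaner and more rigorous.

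\begin{itemize}
\item \textbf{Strict convexity.} In full coordinates the Hessian is only positive semidefinite. Its quadratic form $\sum_i y_i(z_i/y_i-z_0/y_0)^2$ vanishes along $z=\boldsymbol y$, because $\Omega$ is positively homogeneous of degree one. Strict convexity therefore needs a restriction to tangent directions $z_0+\sum_i z_i=0$, which the paper only gestures at. Your matrix $\mathrm{diag}(1/u_i)+g''(s)\mathbf 1\mathbf 1^\top$ is positive definite outright.
\item \textbf{Essential smoothness.} $\mathcal C(\boldsymbol I_t)$ has empty interior in $\mathbb R^{n+1}$, so the paper's ``interior'' and its full-coordinate gradient are not intrinsic. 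What matters is the tangential part $\partial_{y_i}\Omega-\partial_{y_0}\Omega$, which is exactly your reduced partial $\log u_i+1+g'(s)$.
\item \textbf{The cost of reducing.} You must handle the joint limit $u_i\to0$, $y_0\to0$ yourself, and you do so correctly. The paper sidesteps it through the non-intrinsic $y_0$-component $-\sum_i y_i/y_0$.
\item \textbf{Domain.} Your closed extension correctly records that $\Omega=+\infty$ on the face $y_0=0$. So the effective domain is not all of $\mathcal C(\boldsymbol I_t)$, contrary to the paper's claim.
\end{itemize}

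What the paper's computation offers in return is the sum-of-squares form. It exhibits $\Omega$ as a sum of perspectives $y_0\,\phi(y_i/y_0)$ of $\phi(x)=x\log x$, and hence as jointly convex on the whole open orthant.

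One small fix is needed in your second case. From $\sum_i u_i^k\to1$ you only get $\max_i u_i^k\ge s^k/|A|$, not $\ge 1/|A|$. Since $u^k$ converges, use its limit $\bar u$ instead: $\sum_j\bar u_j=1$, so some $\bar u_j\ge 1/|A|$, and then $\log u_j^k$ stays bounded for all large $k$. This also makes the passage to subsequences unnecessary.
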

These properties further allow us to derive the corresponding Fenchel conjugate in closed form.

\begin{proposition}[Convex conjugate of the \OCNN regularizer]
\label{prop:ocnn-conjugate} For any opportunity-cost vector $\hat{\boldsymbol{\Delta}}$, let  $\boldsymbol{\theta}$ be defined as in Equation~\eqref{eq:theta_opportunity_cost} with $\hat{\boldsymbol{\Delta}}$, and let $m_{\boldsymbol{\theta}}$ be the corresponding markup obtained from Equation~\eqref{eq:closed_form_mnl_markup}.
Then, the convex conjugate of the \OCNN regularizer is:
\[
\Omega^*(\boldsymbol{\theta})
=
\max_{\boldsymbol{y}\in \mathcal{C}(\boldsymbol I_t)}
\left\{
\boldsymbol{\theta}^{\top}\boldsymbol{y}
-
\Omega^{\mathrm{\OCNN}}(\boldsymbol{y})
\right\}
=
m_{\boldsymbol{\theta}}-1.
\]
\end{proposition}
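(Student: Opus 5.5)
We compute the conjugate directly by solving the concave maximization over $\mathcal{C}(\boldsymbol I_t)$ via its first-order (KKT) conditions. Proposition~\ref{prop:ocnn_regularizer_properties} says $\Omega^{\mathrm{\OCNN}}$ is of Legendre type, so the maximizer lies in the relative interior. There $y_i>0$ for all $i\in\mathcal{A}(\boldsymbol I_t)\cup\{0\}$, and only the simplex equality constraint matters. Note that $\theta_0=0$. Before differentiating, rewrite the regularizer as
\[
\Omega^{\mathrm{\OCNN}}(\boldsymbol y)=\sum_{i\in\mathcal{A}} y_i\log y_i-(1-y_0)\log y_0 ,
\]
using the $i=0$ term $y_0(\log y_0-\log y_0)=0$ and $\sum_{i\in\mathcal{A}}y_i=1-y_0$. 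This form makes the dependence on $y_0$ explicit.

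\textbf{Step 1 (stationarity).} Let $\mu$ be the multiplier of $\sum y_i=1$. For $i\in\mathcal{A}$ the condition is $\theta_i-\log y_i+\log y_0-1=\mu$. Hence $y_i=y_0e^{\theta_i-1-\mu}$, i.e.\ $y_i/y_0=e^{\theta_i-1-\mu}$. For $y_0$ (with $\theta_0=0$), the derivative of $-\Omega$ in $y_0$ is $-\log y_0-1+(1-y_0)/y_0$, so the condition is $-\log y_0+1/y_0-2=\mu$.

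\textbf{Step 2 (identify the markup).} Set $m:=\mu+1$, so that $y_i=y_0e^{\theta_i-m}$. Normalization gives $y_0(1+e^{-m}S)=1$ with $S=\sum_{i\in\mathcal{A}}e^{\theta_i}$. The $y_0$ condition reads $1/y_0=m+1+\log y_0$. Substituting $\log y_0=-\log(1+e^{-m}S)$ turns this into one equation in $m$. The goal is to show it is equivalent to $(m-1)e^m=S$, which is Equation~\eqref{eq:closed_form_mnl_markup} since $e^{\theta_i}=\exp(a_i-\beta\hat\Delta^i)$. I expect the stationarity bookkeeping to need care. The cleaner route is to guess the candidate from Lemma~\ref{lem:closed_form_mnl_pricing}: take $y_0=1/m$ and $y_i=y_0e^{\theta_i-m}$. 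Then $1+e^{-m}S=1+(m-1)=m$, consistent with normalization. One then checks that the gradient of the objective is constant across coordinates of $\mathcal{C}(\boldsymbol I_t)$, which is the true optimality condition on the simplex. Strict convexity (Proposition~\ref{prop:ocnn_regularizer_properties}) makes this stationary point the unique maximizer. If the $y_0$ stationarity appears to mismatch by a constant, I will recompute the derivative of $-(1-y_0)\log y_0$, since that is where sign slips are likely. The main obstacle is verifying that the $y_0$ condition and the product conditions agree with one common multiplier.

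\textbf{Step 3 (evaluate the objective).} At the optimum, $\log y_i-\log y_0=\theta_i-m$ for each $i\in\mathcal{A}$. Therefore
\[
\boldsymbol\theta^\top\boldsymbol y-\Omega^{\mathrm{\OCNN}}(\boldsymbol y)=\sum_{i\in\mathcal{A}}y_i\bigl(\theta_i-(\theta_i-m)\bigr)=m(1-y_0)=m-1 ,
\]
using $y_0=1/m$. This gives $\Omega^*(\boldsymbol\theta)=m_{\boldsymbol\theta}-1$. Existence and uniqueness of $m>1$ solving Equation~\eqref{eq:closed_form_mnl_markup} come from Lemma~\ref{lem:closed_form_mnl_pricing}: $(m-1)e^m$ increases from $0$ to $\infty$ on $(1,\infty)$. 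As a sanity check, the envelope theorem should give $\nabla\Omega^*=\boldsymbol y^*$. Differentiating $(m-1)e^m=S$ implicitly yields $\partial m/\partial\theta_i=e^{\theta_i}/(me^m)=y_i$, which confirms the formula.
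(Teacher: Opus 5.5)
Your candidate $y_0=1/m$, $y_i=y_0e^{\theta_i-m}$ and your Step~3 evaluation, $\sum_{i\in\mathcal A}y_i\bigl(\theta_i-(\theta_i-m)\bigr)=m(1-y_0)=m-1$, are exactly the paper's proof. The paper simply posits this maximizer and evaluates the objective. What you add is a certificate that the candidate is optimal, and as written that certificate fails.

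In Step~1 you differentiate two different functions. The product condition $\theta_i-\log(y_i/y_0)-1=\mu$ comes from the original $\Omega^{\mathrm{ODFL}}$. The $y_0$ condition comes from your rewritten form $\sum_{i\in\mathcal A}y_i\log y_i-(1-y_0)\log y_0$, in which the constraint has already been substituted, so its $y_i$-partials are $\log y_i+1$, not $\log(y_i/y_0)+1$. Separately, in the rewritten form the $y_0$-derivative of $-\Omega$ is $-\log y_0+(1-y_0)/y_0$, without your extra $-1$.

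Consequently Step~2 cannot succeed. Your equations give $e^{-m}S+\log(1+e^{-m}S)=m$, and substituting $e^{-m}S=m-1$ shows this agrees with $(m-1)e^m=S$ only when $m=e$. The fallback check fails for the same reason: at the candidate, your product coordinates equal $m-1$ while your $y_0$ coordinate equals $m+\log m-2$. Your contingency (recompute the derivative of $-(1-y_0)\log y_0$) removes the $-1$ but still leaves a discrepancy of $\log m$. The deeper error is mixing the two forms.

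The fix is to differentiate a single function. With the original regularizer, $\partial\Omega/\partial y_i=\log(y_i/y_0)+1$ for $i\in\mathcal A(\boldsymbol I_t)$. Because the $i=0$ term vanishes identically, $\partial\Omega/\partial y_0=-\sum_{i\in\mathcal A}y_i/y_0$, which equals $-(1-y_0)/y_0$ on the simplex.
\begin{itemize}
\item With $\theta_0=0$, the $y_0$ condition is $\mu=1/y_0-1$, so $m:=\mu+1=1/y_0$ immediately.
\item The product conditions give $y_i=y_0e^{\theta_i-m}$.
\item Normalization $y_0(1+e^{-m}S)=1$ is then exactly $(m-1)e^m=S$.
\end{itemize}
Equivalently, every coordinate of $\boldsymbol\theta-\nabla\Omega$ equals $m-1$ at the candidate. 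Your rewritten form also works if used consistently: its coordinates $\theta_i-\log y_i-1$ and $-\log y_0+1/y_0-1$ all equal $m+\log m-1$ there.

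Global optimality then follows from concavity of $f(\boldsymbol y)=\boldsymbol\theta^\top\boldsymbol y-\Omega(\boldsymbol y)$. Here $\Omega$ is a sum of perspective functions $y_i\log(y_i/y_0)$; alternatively, use the Hessian in the proof of Proposition~\ref{prop:ocnn_regularizer_properties}. For every feasible $\boldsymbol y'$ you get $f(\boldsymbol y')\le f(\boldsymbol y)+\nabla f(\boldsymbol y)^\top(\boldsymbol y'-\boldsymbol y)=f(\boldsymbol y)$, since a constant gradient is orthogonal to $\boldsymbol y'-\boldsymbol y$. Strict convexity on the simplex gives uniqueness.

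With this repair your argument is complete. It is in fact more complete than the paper's, which never verifies that $\hat{\boldsymbol y}$ is the maximizer.
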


Based on Proposition \ref{prop:ocnn-conjugate}, we now derive the corresponding loss and gradient expressions as follows.

\begin{proposition}[\OCNN Fenchel--Young loss and gradient]
\label{prop:ocnn-fy-loss-gradient}
    Using $\Omega^*(\boldsymbol{\theta})=m_{\boldsymbol{\theta}}-1$ from Proposition~\ref{prop:ocnn-conjugate}, the Fenchel--Young loss for \OCNN is
\[
    \mathcal{L}_{\Omega}^{\mathrm{FY}}
    (\boldsymbol{\theta},\bar{\boldsymbol y})
    =
    (m_{\boldsymbol{\theta}}-1)
    -
    \boldsymbol{\theta}^{\top}\bar{\boldsymbol y} + \Omega(\bar{\boldsymbol y}).
\]
    Moreover, the gradient of the Fenchel--Young loss with respect to the product score vector is
\[
    \frac{\partial \mathcal{L}^{\mathrm{FY}}_{\Omega}}{\partial \theta_i}
    =
    \hat y_{i}-\bar y_i,
    \qquad i\in \mathcal{A}(\boldsymbol I_t),
\]
where $\hat{\boldsymbol y}$ is an optimal solution to \eqref{eq:OCNNopt}.
\end{proposition}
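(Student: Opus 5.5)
The loss formula follows by substitution, and the gradient reduces to showing $\nabla\Omega^*(\boldsymbol\theta)=\hat{\boldsymbol y}$. For the loss, I will plug $\Omega^*(\boldsymbol\theta)=m_{\boldsymbol\theta}-1$ from Proposition~\ref{prop:ocnn-conjugate} into the second line of the general Fenchel--Young definition, $\mathcal L^{\mathrm{FY}}_\Omega(\boldsymbol\theta,\bar{\boldsymbol y})=\Omega^*(\boldsymbol\theta)+\Omega(\bar{\boldsymbol y})-\boldsymbol\theta^\top\bar{\boldsymbol y}$. This gives the stated expression directly. The only remaining work is the gradient formula, and since $\Omega(\bar{\boldsymbol y})$ does not depend on $\boldsymbol\theta$, that amounts to computing $\nabla\Omega^*(\boldsymbol\theta)$.

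I will compute this gradient explicitly from the closed form rather than invoke Danskin's theorem abstractly; this also serves as a check. The markup $m=m_{\boldsymbol\theta}$ is defined implicitly by $F(m,\boldsymbol\theta):=(m-1)e^{m}-\sum_{j\in\mathcal A(\boldsymbol I_t)}e^{\theta_j}=0$. We have $\partial_m F=me^{m}>0$ since $m>1$, so the implicit function theorem applies. For $i\in\mathcal A(\boldsymbol I_t)$ it gives
\[
\frac{\partial m}{\partial\theta_i}=\frac{e^{\theta_i}}{m e^{m}}.
\]

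Next I identify this quantity with $\hat y_i$, the optimal solution of \eqref{eq:OCNNopt}. By Lemma~\ref{lem:closed_form_mnl_pricing}, the optimal price is $r_i=\hat\Delta^i+m/\beta$, so $a_i-\beta r_i=\theta_i-m$. The induced MNL probability is therefore $\hat y_i=e^{\theta_i-m}/(1+e^{-m}\sum_j e^{\theta_j})$. Using the markup equation, $\sum_j e^{\theta_j}=(m-1)e^{m}$, so the denominator equals $1+(m-1)=m$. Hence $\hat y_i=e^{\theta_i}/(me^{m})=\partial m/\partial\theta_i$.

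It remains to confirm that this $\hat{\boldsymbol y}$ is indeed the maximizer of \eqref{eq:OCNNopt}. I will check the first-order conditions on the relative interior. With the multiplier of the simplex constraint, stationarity reads $\theta_i-\log(y_i/y_0)=\lambda$ for $i\in\mathcal A(\boldsymbol I_t)$, and the $y_0$ condition reads $-\partial\Omega/\partial y_0=\sum_{i\ge1}y_i/y_0=\lambda$. Plugging in $y_i/y_0=e^{\theta_i-m}$ gives $\lambda=m$ in the first condition. For the second, $\sum_i y_i/y_0=e^{-m}\sum_j e^{\theta_j}=m-1$. At this point I must be careful with the $y_0$ derivative of $\Omega^{\mathrm{\OCNN}}$: differentiating $y_0(\log y_0-\log y_0)$ contributes nothing, but the $-y_i\log y_0$ terms together give $-\sum_{i\ge1}y_i/y_0$. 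The resulting constant offset between $m$ and $m-1$ is exactly what the multiplier convention must absorb. I expect this bookkeeping to be the main obstacle. By Proposition~\ref{prop:ocnn_regularizer_properties} ($\Omega$ strictly convex and Legendre), the stationary point is the unique maximizer, and Danskin's theorem then gives $\nabla\Omega^*(\boldsymbol\theta)=\hat{\boldsymbol y}$ independently, consistent with the implicit-differentiation computation.

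Combining these steps yields $\partial\mathcal L^{\mathrm{FY}}_\Omega/\partial\theta_i=\hat y_i-\bar y_i$ for $i\in\mathcal A(\boldsymbol I_t)$, since the only $\boldsymbol\theta$-dependence of $-\boldsymbol\theta^\top\bar{\boldsymbol y}$ contributes $-\bar y_i$. The coordinates $\theta_0$ and those of unavailable products are fixed at $0$ and are not differentiated.
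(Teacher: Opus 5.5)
Your overall plan is sound, and the two computations at its core are correct. Implicit differentiation of $(m-1)e^{m}=\sum_{j}e^{\theta_j}$ gives $\partial m_{\boldsymbol\theta}/\partial\theta_i=e^{\theta_i}/(m e^{m})$. The markup equation turns the MNL denominator into $m$, so the MNL probability at the prices of Lemma~\ref{lem:closed_form_mnl_pricing} is $e^{\theta_i-m}/m$, which is the same quantity.

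\textbf{The gap.} It lies in the step you yourself flag as the obstacle: verifying that this vector solves \eqref{eq:OCNNopt}.
\begin{itemize}
\item For $i\in\mathcal A(\boldsymbol I_t)$ the derivative is $\partial\Omega/\partial y_i=\log(y_i/y_0)+1$, not $\log(y_i/y_0)$. You dropped the $+1$ that comes from differentiating $y_i\log y_i$.
\item With the correct derivative, stationarity reads $\theta_i-\log(y_i/y_0)-1=\lambda$. At $y_i/y_0=e^{\theta_i-m}$ this gives $\lambda=m-1$, which matches the $y_0$ condition $\sum_{i\ge 1}y_i/y_0=m-1$ exactly.
\item Your proposed resolution, that the offset between $m$ and $m-1$ is ``absorbed by the multiplier convention,'' is not a valid argument. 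The single equality constraint carries a single multiplier, and every stationarity equation must hold with the same $\lambda$. Had the discrepancy been genuine, your candidate would not be stationary, and the identification of $\partial m_{\boldsymbol\theta}/\partial\theta_i$ with the optimizer of \eqref{eq:OCNNopt} would fail.
\end{itemize}
This step carries real weight in your route. Lemma~\ref{lem:closed_form_mnl_pricing} concerns the pricing problem with $\boldsymbol r\ge 0$ and $\hat{\boldsymbol\Delta}\ge 0$, whereas the loss is evaluated at arbitrary scores $\boldsymbol\theta$. The corrected KKT check covers every $\boldsymbol\theta$: a relative-interior stationary point of this concave problem is a global maximizer, and it is unique by strict concavity.

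\textbf{Comparison with the paper.} Once corrected, your proof takes a genuinely different and more explicit route.
\begin{itemize}
\item The paper simply invokes the general fact that for a Legendre-type $\Omega$ the conjugate is differentiable, with $\nabla\Omega^*(\boldsymbol\theta)$ equal to the maximizer. It never uses the closed form of $\hat{\boldsymbol y}$ for the gradient.
\item Your argument instead differentiates $m_{\boldsymbol\theta}-1$ directly and matches it to the explicit maximizer. This avoids the abstract duality theorem, whose hypotheses need some care here since $\Omega$ blows up as $y_0\to 0$.
\item It also supplies a verification that the paper's proof of Proposition~\ref{prop:ocnn-conjugate} only asserts: that $\hat y_0=1/m_{\boldsymbol\theta}$, $\hat y_i=e^{\theta_i-m_{\boldsymbol\theta}}/m_{\boldsymbol\theta}$ is the optimizer.
\end{itemize}
Conversely, if you make the Danskin/Legendre step your main argument, as the paper does, the explicit identification becomes only a consistency check and the KKT bookkeeping is no longer needed.
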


The \PNN and \OCNN losses share the same gradient form, namely $\hat{\boldsymbol y}-\bar{\boldsymbol y}$, while they differ in the choice of regularizer and thus in the normalization term appearing in the loss. In particular, \PNN leads to the classical log-sum-exp term, whereas \OCNN replaces it with a markup-based term.

\subsection{Anticipative Customer Choice Generation} \label{subsec:anticipative scenario and masking}

We now turn to the construction of the target choices used to train the two DFL architectures. Ideally, we would train the policies using choices induced by the optimal dynamic pricing policy. Generating such labels, however, would require solving the original high-dimensional dynamic program and would therefore defeat the purpose of the learning approach. Instead, we use \emph{anticipative}, or perfect-information, choices. These are obtained by solving sampled demand scenarios under the assumption that all future uncertainty is revealed at the beginning of the selling horizon. Although  such a policy is not implementable in practice, policies with perfect hindsight are often used in RM to provide a perfect-information upper bound \citep{gallego1994optimal}, which means they contain some valuable information. Moreover, such an approach has been shown to be successful in other optimization-augmented policy learning approaches \citep{baty2024combinatorial, hoppe2026structured}.

We first describe perfect information when customer choice follows an MNL model. We then show that, conditional on a realization of the underlying utility shocks, the corresponding pricing problem reduces to a tractable problem. Finally, we explain why the same construction applies directly when scenarios are generated from a \MMNL model.

\subsubsection{Perfect information under the MNL model}
\label{sec:anticipative_mnl}

Recall the random-utility representation of the MNL model. At time $t$, the realized utility of an available product $i\in\mathcal A(\boldsymbol I_t)$ is
\begin{equation*}
U_{i,t}(r_{i,t}) = a_i-\beta r_{i,t}+\eta_{i,t},
\end{equation*}
where $\eta_{i,t}$ is an independent standard Gumbel shock. The utility of the no-purchase option is $U_{0,t}=\eta_{0,t}$. Thus, conditional on the shocks, customer choice is deterministic:
\begin{equation}\label{eq:UtilityFormulation}
Y_t \in \arg\max_{i\in\{0\}\cup\mathcal A(\boldsymbol I_t)} U_{i,t}(r_{i,t}).
\end{equation}
Integrating over the Gumbel shocks recovers the usual MNL choice probabilities in Equation~\eqref{eq:MNL-choice}. A perfect-information scenario corresponds to sampling the entire sequence of utility shocks at the beginning of the horizon. We denote such a scenario by
\begin{equation}\label{eq:shocksSequence}
{\boldsymbol{\eta}} = \left \{ \left( \eta_{0,t}, \eta_{1,t}, \ldots, \eta_{n,t} \right) \right\}_{t=1}^T .
\end{equation}
Under perfect information, the firm observes $\boldsymbol \eta$ before making any pricing decisions. Hence, it knows exactly how every customer over the selling horizon would respond to any price vector. The anticipative oracle chooses prices while exploiting this information, subject only to the inventory constraints.

This problem is substantially simpler than the original stochastic dynamic pricing problem. In particular, once the shocks are known, there is no remaining choice uncertainty: prices only determine which product, if any, is selected at each period. We exploit this observation next to eliminate the pricing variables altogether.

\subsubsection{Solving the perfect-information pricing problem}
\label{sec:anticipative_reduction}
Consider a fixed scenario $\boldsymbol \eta$ and a particular period $t$. Suppose that we wish to induce the customer to purchase product $i$. Because prices of all other available products can be made sufficiently large, the binding comparison is with the no-purchase option. Product $i$ can therefore be induced whenever
\begin{equation*}
a_i-\beta r_{i,t}+\eta_{i,t} \geq \eta_{0,t}.
\end{equation*}
Consequently, the largest price at which product $i$ can be sold in period $t$ is
\begin{equation}
\widetilde r_{i,t} = \frac{ a_i+\eta_{i,t}-\eta_{0,t}}{\beta}.
\label{eq:recovered_price}
\end{equation}
We use the convention that ties are broken in favor of the designated product. Equivalently, one may subtract an arbitrarily small $\varepsilon>0$ from the right-hand side of \eqref{eq:recovered_price} to ensure strict preference.

Because prices are constrained to be nonnegative, product $i$ can only be induced when $\widetilde r_{i,t}\geq 0$. Define
\begin{equation}
\mathcal B_t = \left \{ i\in\mathcal N: \widetilde r_{i,t}\geq 0 \right \}.
\end{equation}
The key implication is that, under perfect information, each feasible pair $(i,t)$ can be assigned a known reward $\widetilde r_{i,t}$: if product $i$ is selected in period $t$, this is the maximum revenue that can be extracted from that customer while inducing that choice. Prices therefore no longer need to appear explicitly in the anticipative optimization problem.

Let $y_{i,t}=1$ if the anticipative solution assigns the customer arriving at time $t$ to product $i$, and let $y_{0,t}=1$ denote no purchase. The perfect-information pricing problem is equivalent to
\begin{equation} \tag{\textsf anticipative pricing}
\label{eq:reduced_anticipative}
\begin{aligned}
\max_{\mathbf y}\quad & \sum_{t=1}^T \sum_{i\in\mathcal N} \widetilde r_{i,t}y_{i,t} \\
\text{s.t.}\quad &\sum_{t=1}^T y_{i,t}\leq c_i, i\in\mathcal N, \\
&\sum_{i\in\mathcal N\cup\{0\}}y_{i,t}=1,
 t=1,\ldots,T, \\
&y_{i,t}=0, i\in\mathcal N\setminus\mathcal B_t,\quad
t=1,\ldots,T, \\
&0\leq y_{i,t}\leq 1, i\in\mathcal N\cup\{0\},\quad t=1,\ldots,T.
\end{aligned}
\end{equation}

\begin{proposition}[Perfect-information reduction]
\label{prop:perfect_information_reduction}
For a fixed MNL scenario $\boldsymbol \eta$, Problem~\eqref{eq:reduced_anticipative} solves the perfect-information dynamic pricing problem. Moreover, it is a capacitated maximum-cost flow problem and its linear-programming relaxation admits an integral optimal solution.
\end{proposition}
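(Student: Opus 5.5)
We prove the two claims separately: first that Problem~\eqref{eq:reduced_anticipative} has the same optimal value as the perfect-information pricing problem, then that it is a flow problem with integral LP relaxation.

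\textbf{Equivalence with the perfect-information problem.} We show both inequalities between the optimal values.

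For the direction ``perfect-information value $\le$ value of \eqref{eq:reduced_anticipative}'', fix $\boldsymbol\eta$ and any (possibly anticipative) feasible price sequence. Let $y_{i,t}=1$ if the induced choice $Y_t$ equals $i$.
\begin{itemize}
\item Since shocks are known, the inventory trajectory is deterministic. A product is offered only while its inventory is positive, so $\sum_t y_{i,t}\le c_i$.
\item Exactly one option is chosen each period, so $\sum_{i\in\mathcal N\cup\{0\}} y_{i,t}=1$.
\item If $Y_t=i\neq 0$, then $i$ beats the no-purchase option: $a_i-\beta r_{i,t}+\eta_{i,t}\ge\eta_{0,t}$. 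Hence $r_{i,t}\le\widetilde r_{i,t}$.
\item Since $r_{i,t}\ge 0$, this forces $\widetilde r_{i,t}\ge 0$, i.e.\ $i\in\mathcal B_t$.
\end{itemize}
The collected revenue is therefore at most $\sum\widetilde r_{i,t}y_{i,t}$, and this $\mathbf y$ is feasible for \eqref{eq:reduced_anticipative}.

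For the converse, take any integral feasible $\mathbf y$ and build prices period by period.
\begin{itemize}
\item If $y_{i,t}=1$ with $i\neq0$, post $r_{i,t}=\widetilde r_{i,t}\ge 0$. Post every other available product at a price large enough that its utility is strictly below $\eta_{0,t}$; this is possible because $\boldsymbol\eta$ is known and finite.
\item If $y_{0,t}=1$, price all available products this way.
\end{itemize}
By the tie-breaking convention, the customer then picks exactly the designated option, and revenue equals $\sum\widetilde r_{i,t}y_{i,t}$.

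Two points need care in this converse step.
\begin{itemize}
\item \emph{Show-all feasibility.} All prices are finite and nonnegative, so the show-all constraint holds.
\item \emph{Availability.} A product designated at time $t$ must still be in stock. This holds because the capacity constraint guarantees that fewer than $c_i$ earlier periods were assigned to $i$.
\end{itemize}
The converse uses integral $\mathbf y$. The integrality claim proven next shows that an integral optimum exists, so the two optimal values coincide.

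\textbf{Flow structure and integrality.} Build a bipartite network:
\begin{itemize}
\item a source with supply $T$ sends one unit to each period node $t$;
\item each period node $t$ has arcs to product nodes $i\in\mathcal B_t$ with cost $\widetilde r_{i,t}$ and capacity $1$, and an arc to a no-purchase node with cost $0$;
\item each product node $i$ has an arc to the sink with capacity $c_i$;
\item the no-purchase node has an arc to the sink with capacity $T$.
\end{itemize}
Feasible flows correspond exactly to feasible $\mathbf y$:
\begin{itemize}
\item flow conservation at period nodes gives $\sum_i y_{i,t}=1$;
\item arc capacities give $\sum_t y_{i,t}\le c_i$;
\item the constraint $y_{i,t}=0$ for $i\notin\mathcal B_t$ is encoded by omitting those arcs.
\end{itemize}
Thus \eqref{eq:reduced_anticipative} is a capacitated maximum-cost flow problem.

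Its constraint matrix is the node--arc incidence matrix of a directed graph, which is totally unimodular. Equivalently, it is a bipartite $b$-matching constraint system. All capacities and supplies $c_i$, $T$, $1$ are integers. Therefore the LP relaxation's polytope has integral vertices. The polytope is nonempty (assign every period to no-purchase) and bounded, so an optimal vertex exists and is integral.

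The main obstacle is rigor in the converse construction of the equivalence: showing that arbitrarily large prices on non-designated products keep the choice deterministic and consistent with the inventory state under the show-all constraint. The total-unimodularity step is standard.
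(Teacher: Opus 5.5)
Your proposal is correct and follows essentially the same route as the paper. Both proofs bound any perfect-information revenue by $\sum_{t}\sum_{i}\widetilde r_{i,t}y_{i,t}$ through the comparison with the no-purchase utility, and both realize an integral optimal $\boldsymbol y^*$ by posting $\widetilde r_{i,t}$ on the designated product and a sufficiently large finite price on every other available product. Your version is more complete: you check the $\mathcal B_t$ and availability constraints explicitly and prove the flow structure and LP integrality via total unimodularity, whereas the paper only asserts that an integral optimal solution exists.
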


Proposition~\ref{prop:perfect_information_reduction} is also the reason we use customer choices, rather than anticipative prices, as training targets. Once the selected product is fixed, prices of the unselected products are essentially arbitrary as long as they are sufficiently high. The choice trajectory, in contrast, precisely captures the relevant allocation of scarce inventory across customers.

For each sampled scenario $\boldsymbol \eta$, we therefore solve Problem~\eqref{eq:reduced_anticipative} and recover the associated sequence of states and choices $\left\{ (\boldsymbol x_t,\boldsymbol y_t) \right\}_{t=1}^T$. Repeating this procedure for $S$ independently sampled scenarios produces $ST$ state-choice pairs that constitute the training dataset for both PDFL and ODFL. It is worth noting that the above procedure can be directly extended to a time-varying MNL model by replacing the parameters $a_i$ and $\beta$ with their time-varying counterparts $a_{i,t}$ and $\beta_t$.

\subsubsection{Extension to \MMNL scenarios}
\label{sec:anticipative_mixmnl}

We now show that even if the model is \MMNL, the same oracle can be used. Since \MMNL can approximate any random utility model arbitrarily closely \citep{mcfadden2000mixed}, our approach is therefore without loss of generality. Suppose instead that demand is generated by a \MMNL model with segments $k\in\mathcal K$, mixture weights $\gamma_t^k$, product quality indices $a_i^k$, and price sensitivities $\beta^k$. Under a \MMNL model, a perfect-information scenario additionally includes the realization of the latent customer segment in each period. We therefore observe $(\boldsymbol{k},\boldsymbol{\eta})$, where $k_t \in \mathcal K$ denotes the segment of the customer arriving at time $t$. Conditional on $(\boldsymbol{k},\boldsymbol{\eta})$, the
realized utility is
\begin{equation}
U_{i,t}(r_{i,t}) = a_i^{k_t} - \beta^{k_t} r_{i,t} + \eta_{i,t}.
\end{equation}
Thus, after the latent segment and Gumbel shocks have been sampled, the problem is identical to the MNL perfect-information problem above, with period-specific parameters $a_{i,t}=a_i^{k_t}$ and $\beta_t=\beta^{k_t}$. In particular, the maximal price that induces product $i$ becomes
\begin{equation}
\widetilde r_{i,t} = \frac{a_i^{k_t} +\eta_{i,t} -\eta_{0,t}}{ \beta^{k_t}
},
\end{equation}
and the anticipative choice trajectory is obtained by solving exactly the same maximum-cost flow problem \eqref{eq:reduced_anticipative}. Hence, customer heterogeneity in the \MMNL model changes the sampled edge rewards but not the structure or computational complexity of the perfect-information oracle.

\section{Experimental Design}
\label{sec:experimental_design}

We evaluate the proposed policies in three increasingly demanding settings. The first assumes that the ground-truth choice model is MNL and fully known. This setting isolates the optimization problem and asks whether our learning approach can recover a high-quality dynamic pricing policy when the MNL structure embedded in the architectures is correctly specified. The second assumes that the ground truth is a known \MMNL model. In this case, although we can generate training scenarios directly from the true demand model, we must approximate it by an MNL model for our PDFL and ODFL. This setting tests the robustness of the MNL-guided architectures to model misspecification. Finally, in the data-driven setting, the demand model itself is unknown and must first be estimated from observed prices and customer choices.

We describe these three settings in Section~\ref{sec:settings_instances}. We then describe the baseline policies in Section~\ref{sec:baselines}, and the implementation of \PNN and \OCNN in Section~\ref{subsec:configurations}.

\subsection{Experimental Settings}
\label{sec:settings_instances}

\subsubsection{Full-information MNL: isolating the optimization-stage contribution}
\label{sec:fi_mnl}

\paragraph{Information setting.}
We first consider the case in which the ground-truth demand model is an MNL model and all of its parameters are known. At time $t$, customer choice probabilities are
\begin{equation}
\label{eq:time-dependent-mnl}
\mathbb{P}_t(i\mid \boldsymbol r_t,\boldsymbol I_t) = \frac{\exp(a_{i,t}-\beta_t r_{i,t})
}{1+ \sum_{j\in\mathcal A(\boldsymbol I_t)} \exp(a_{j,t}-\beta_t r_{j,t})}.
\end{equation}
Because the true model belongs to the same MNL family used by our architectures, no choice-model approximation is necessary. The MNL parameters used by \PNN and \OCNN are therefore the true parameters, $\mathbb{Q}=\mathbb{P}$, and the same model is used to generate the anticipative training scenarios described in Section~\ref{sec:anticipative_mnl}.

This experiment removes both demand-estimation error and model misspecification. It therefore addresses a simple question: assuming that the choice model is known and correctly specified, can the proposed learning approach recover the dynamic pricing policy without solving the full dynamic program?

\paragraph{Problem instances.}
We first consider three small instances with $n=3$ products, a selling horizon of $T=50$, and time-independent parameters, adapted from \cite{dong2009dynamic}. The instances differ in their initial inventories, $C\in\{5,10,15\}$, while sharing the same choice-model parameters. Their state spaces are sufficiently small that we can solve the original dynamic program exactly. These instances therefore allow us to measure the optimality gap of PDFL and ODFL directly.

We then generate larger-scale MNL instances designed to reflect airline revenue management settings with multiple departure options serving the same origin-destination market. Each instance is characterized by the number of itineraries $n$, initial capacity $C$, and selling horizon $T$. As summarized in Table~\ref{tab:instances}, we consider several to a few dozen itineraries, each with tens to a few hundred sellable seats. We also include a few stressed instances with many itineraries and low capacity to test robustness under scarce inventory. For controlled comparisons, all itineraries within an instance have the same initial capacity, although the framework readily accommodates heterogeneous capacities. We further characterize capacity scarcity by the ratio of total initial inventory to expected total demand (denoted by $\rho$ in Table~\ref{tab:instances}), with smaller values indicating tighter capacity. Details on the calibration of the MNL instance parameters are provided in Appendix~\ref{app:instance_parameters}.

\begin{table}[t]
\centering
\caption{Summary of all instances. Instances are named following the pattern $(\nb,C,T)$}
\label{tab:instances}

\small
\begin{threeparttable}
\centering

\begin{tabular}{@{}lccc*{10}{c}@{}}
\toprule
& \multicolumn{3}{c}{Small MNL}
& \multicolumn{10}{c}{Large-scale MNL} \\
\cmidrule(lr){2-4}
\cmidrule(lr){5-14}

Instance
& $S_1$ & $S_2$ & $S_3$
& $L_1$ & $L_2$ & $L_3$ & $L_4$ & $L_5$
& $L_6$ & $L_7$ & $L_8$ & $L_9$ & $L_{10}$ \\
\midrule

Number of itineraries $(\nb)$
& 3 & 3 & 3
& 21 & 12 & 21 & 6 & 15 & 24 & 15 & 12 & 30 & 6 \\

Initial capacity $(C)$
& 5 & 10 & 15
& 30 & 90 & 30 & 210 & 30 & 30 & 120 & 180 & 120 & 150 \\

Number of time steps $(T)$
& 50 & 50 & 50
& 1400 & 1800 & 800 & 1600 & 600 & 600 & 1400 & 2000 & 1400 & 400 \\

Inventory-demand ratio $(\rho)$
& 0.30 & 0.60 & 0.90
& 0.48 & 0.64 & 0.90 & 0.84 & 0.90 & 1.44 & 1.38 & 1.13 & 2.80 & 3.00 \\

\bottomrule
\end{tabular}
\end{threeparttable}
\end{table}

\subsubsection{Full-information \MMNL: robustness to model misspecification}
\label{sec:fi_mixmnl}

\paragraph{Information setting.} We next consider a richer demand environment in which customer choice follows a \MMNL model with segments $k\in\mathcal K$:
\begin{equation}
\mathbb{P}_t(i\mid\boldsymbol r_t,\boldsymbol I_t) = \sum_{k\in\mathcal K}
\gamma_t^k \frac{\exp(a_i^k-\beta^k r_{i,t})}{1+ \sum_{j\in\mathcal A(\boldsymbol I_t)} \exp(a_j^k-\beta^k r_{j,t})}.
\label{eq:mixmnl_exp}
\end{equation}
The complete \MMNL model is known in this setting. We therefore sample the latent segment and utility shocks directly from the ground-truth model when generating the anticipative training data. As shown in Section~\ref{sec:anticipative_mixmnl}, conditional on these realizations the perfect-information oracle reduces to the same maximum-cost-flow problem as in the MNL case.

The PDFL and ODFL architectures, however, deliberately retain an MNL structural layer. We therefore construct an MNL surrogate $\mathbb{Q}$ that approximates the true \MMNL model $\mathbb{P}$:
$$
    \mathbb{Q}=\Pi_{\mathrm{MNL}}(\mathbb{P}),
$$
where the projection operator $\Pi_{\mathrm{MNL}}$ is defined through KL-divergence minimization in Appendix~\ref{appendix:kl_projection}. Hence, the anticipative labels are generated from the true heterogeneous demand model $\mathbb{P}$, while the policy architectures only have access to its MNL approximation $\mathbb{Q}$.

This setting isolates the effect of structural misspecification. In particular, it asks whether MNL-guided policy learning remains useful when the true substitution patterns and price sensitivities are substantially richer than those represented by a single MNL model.

\paragraph{Problem instances.} We reuse the large-scale instances for MNL in Table~\ref{tab:instances}, but assume a \MMNL distribution constructed to reflect a more practical airline RM setting.
To be consistent with airline RM practice, we assume four passenger segments: Business-Hurry (BH), Business (B), Leisure-Family (LF), and Leisure (L). L- and LF-type customers are concentrated earlier in the horizon, whereas B-type and especially BH-type customers become more prevalent closer to departure. Figure~\ref{fig:segment_proportion} illustrates this pattern for an instance with six itineraries, 150 seats per itinerary, and a 400-period selling horizon.
Segment-specific parameters $a_i^k$ and $\beta^k$ introduce heterogeneity in itinerary preferences and willingness to pay.

\begin{figure}[htbp]
    \centering
    \resizebox{0.55\textwidth}{!}{%
        
\begingroup
\definecolor{segLeisure}{HTML}{66C2A5}
\definecolor{segFamily}{HTML}{FC8D62}
\definecolor{segBusiness}{HTML}{E78AC3}
\definecolor{segHurry}{HTML}{8DA0CB}

\begin{tikzpicture}[
    declare function={
        leisure(\t)=0.1+0.9*exp(-0.5*((\t-400*20/150)/(400*40/150))^2);
        family(\t)=0.1+0.9*exp(-0.5*((\t-400*100/150)/(400*50/150))^2);
        business(\t)=0.1+0.9*exp(-0.5*((\t-480)/(400*50/150))^2);
        hurry(\t)=0.1+0.9*exp(-0.5*((\t-480)/80)^2);
        total(\t)=leisure(\t)+family(\t)+business(\t)+hurry(\t);
    }
]
\begin{axis}[
    width=\linewidth,
    height=0.62\linewidth,
    font=\small,
    title={
        Customer Segment Proportions Over Time\\
        (Instance: $n=6$, $C=150$, $T=400$)
    },
    title style={
        align=center,
        font=\bfseries,
    },
    xlabel={Time Step},
    ylabel={Proportion},
    xmin=0, xmax=400,
    ymin=0, ymax=1,
    xtick={0,50,...,400},
    ytick={0,0.25,0.5,0.75,1},
    yticklabel style={
        /pgf/number format/fixed,
        /pgf/number format/precision=2,
        /pgf/number format/fixed zerofill,
    },
    scaled y ticks=false,
    tick align=outside,
    tick style={black},
    axis line style={black},
    axis on top,
    ymajorgrids=true,
    grid style={gray!30,dashed},
    stack plots=y,
    area style,
    domain=1:400,
    samples=200,
    reverse legend,
    legend pos=north east,
    legend cell align=left,
    legend style={
        fill=white,
        fill opacity=0.9,
        text opacity=1,
        draw=gray!40,
        rounded corners=2pt,
        font=\small,
    },
]

\addplot[draw=none,fill=segHurry!88!white]
    {hurry(x)/total(x)} \closedcycle;
\addlegendentry{Business-Hurry}

\addplot[draw=none,fill=segBusiness!88!white]
    {business(x)/total(x)} \closedcycle;
\addlegendentry{Business}

\addplot[draw=none,fill=segFamily!88!white]
    {family(x)/total(x)} \closedcycle;
\addlegendentry{Leisure-Family}

\addplot[draw=none,fill=segLeisure!88!white]
    {leisure(x)/total(x)} \closedcycle;
\addlegendentry{Leisure}

\end{axis}
\end{tikzpicture}
\endgroup

    }
    \caption{\centering Segment proportions over time for instance (6,150,400).}
    \label{fig:segment_proportion}
\end{figure}

To reproduce the characteristic evolution of the passenger mix over an airline booking horizon, the segment weights $\gamma_t^k$ vary systematically with time. Specifically, we construct the four segment weights using smooth Gaussian-shaped curves whose peaks occur at different stages of the selling horizon. At each time step, the four segment scores are normalized to obtain the segment proportions. Finally, to capture heterogeneity in willingness to pay and itinerary preferences, we independently sample $\beta^k$ and $a_i^k$ from segment-specific distributions.

\subsubsection{Data-driven setting: learning from observed choices}
\label{sec:data_driven_setting}

\paragraph{Information setting.} Finally, we consider the realistic setting in which the decision maker does not observe the parameters of the demand model. Instead, we generate historical observations of posted prices and realized customer choices from the ground-truth environment $\mathbb{P}$, and make only these observations available to the policy-construction procedure.

We reuse the large-scale instances in Table~\ref{tab:instances} from the full-information setting, considering cases in which the true environment $\mathbb{P}$ is either an MNL or a \MMNL model. Historical observations are generated by sampling from $\mathbb{P}$, and the surrogate MNL model $\mathbb{Q}$ is estimated by maximizing the likelihood of the observed data. In the data-driven setting, $\mathbb{Q}$ is also used to generate the training data for our architectures, as described in Section~\ref{subsec:anticipative scenario and masking}.

This final setting therefore evaluates the complete pipeline and combines the two potential sources of performance loss studied separately above: statistical error from estimating the demand model and structural approximation error from representing that estimated model through the MNL layer embedded in \PNN and \OCNN.

\subsection{Baseline Policies}
\label{sec:baselines}

We compare \PNN and \OCNN with three families of tractable revenue-management policies. The first prices each itinerary independently and therefore ignores substitution across products. The other two use the same single-itinerary dynamic programs to approximate inventory opportunity costs, but incorporate these estimates into an MNL joint-pricing rule. For each family, we consider both a time-independent and a time-dependent specification, yielding six baseline policies in total (denoted by \TimeIndItinerary, \TimeDepItinerary, \TimeIndApp, \TimeDepApp, \TimeIndUnified, \TimeDepUnified). Detailed formulations are provided in Appendix~\ref{app:benchmark_policies}.

\paragraph{Independent-itinerary pricing \textbf{ItPri}.}
Our first baseline follows a common approximation in airline revenue management and decomposes the problem by itinerary. For each product $i$, we construct a binary choice model between purchasing itinerary $i$ and not purchasing it, and solve the resulting single-product dynamic program independently of the other itineraries. The resulting policy is computationally tractable and accounts for the dynamic scarcity of each product's inventory, but ignores substitution across products.

\paragraph{Joint pricing with product-specific opportunity costs \textbf{JoPri}.}
The second baseline reintroduces substitution across products while retaining the tractability of the single-itinerary decomposition. Specifically, we use the opportunity costs obtained from the independent dynamic programs as approximations of the true marginal continuation values in the MNL statewise pricing problem. Thus, unlike \textbf{ItPri}, \textbf{JoPri} prices all available products jointly and explicitly accounts for substitution. Its approximation lies in replacing the true high-dimensional inventory opportunity costs by those obtained from the independent single-itinerary dynamic programs.

\paragraph{Joint common pricing with an aggregated opportunity cost \textbf{JoComPri}.}
Our third baseline imposes an additional level of aggregation. Rather than retaining a separate opportunity cost for every product, \textbf{JoComPri} combines the single-itinerary opportunity costs into a single attractiveness-weighted inventory value.
As every product shares the same opportunity cost, \textbf{JoComPri} assigns a common price to all available products at each time step.

\subsection{Architecture Configurations and Model Selection}
\label{subsec:configurations}

Building on the baselines introduced above, we construct RM-structured features that provide informative price and opportunity-cost signals for our learning architectures. We then use these features to define several architecture configurations, which are trained and selected based on validation performance. Finally, we assess the contribution of the residual parameterization by comparing it with a direct-prediction counterpart.

\paragraph{Feature engineering.}
To map the problem state to decision-relevant inputs, we construct RM-structured features beyond raw state variables such as inventory. These features include price- and opportunity-cost-based signals, such as myopic prices and outputs from the DP-based baselines, as well as features capturing inventory and time, customer-choice behavior, marginal scarcity, and cross-product competition. Together, they provide interpretable signals of demand, inventory value, and substitution for the \PNN and \OCNN. Detailed descriptions are provided in Appendix~\ref{app:feature_engineering}.

\paragraph{Architecture configurations.}
We consider several configurations of the statistical predictor $\varphi_w$ as well as its training objective. We first compare direct prediction with residual parameterizations. Table~\ref{tab:pipeline_configurations} summarizes the candidate configurations, and details are provided in Appendix \ref{app:model_selection}.
Following standard train-validation-test practice, we report test performance on the true environment $\mathbb{P}$ for the configurations with the best validation revenue. In the full-information setting, validation is performed directly on $\mathbb{P}$. In the data-driven setting, where $\mathbb{P}$ is unavailable, we instead use the surrogate MNL model $\mathbb{Q}$ as the validation environment. For a fair comparison, we use the validation-selected baseline as the primary benchmark, while reporting the testing revenue of all six baselines.

\begin{table}[htbp]
\centering
\caption{Candidate configurations for the \PNN and \OCNN pipelines.}
\label{tab:pipeline_configurations}
\small
\begin{tabularx}{\textwidth}{lX}
\toprule
\textbf{Configuration component} & \textbf{Candidate values} \\
\midrule
Model parameterization
& Direct, additive residual, multiplicative residual \\
Reference policy
& Mean baseline output, validation-selected baseline output \\
Residual magnitude
& $K_{\mathrm{add}}\in\{1,5\}$,
  $K_{\mathrm{mul}}\in\{0.25,0.7\}$ \\
Additional loss term (\PNN only)
& None, chosen-item squared hinge penalty \\
\bottomrule
\end{tabularx}
\end{table}

\paragraph{Benchmarking architectures.}
We compare direct prediction with residual parameterizations for both the \PNN and \OCNN. Direct prediction models (``direct'' in Figure \ref{fig:residual_comparison}) predict prices or opportunity costs from scratch, whereas residual models learn corrections to a reference RM policy, as introduced in Section~\ref{subsec:statistical predictor}.
    \begin{figure}[htbp]
    \centering
    \resizebox{\textwidth}{!}{%
        
\begin{tikzpicture}
\begin{scope}[shift={(0bp,250.8bp)}]
\begin{axis}[
  at={(47.7352bp,80.4058bp)},
  anchor=south west,
  scale only axis,
  width=471.025bp,
  height=130.922bp,
  xmin=-0.45000000000000001,
  xmax=9.4499999999999993,
  ymin=-1.7654418928801729,
  ymax=6.5782786316052793,
  enlargelimits=false,
  axis lines=box,
  axis line style={line width=0.8bp},
  tick align=outside,
  tick pos=left,
  major tick length=3.5bp,
  tick style={black,line width=0.8bp},
  scaled ticks=false,
  xtick={0,1,2,3,4,5,6,7,8,9},
  xticklabels={{6\_150\_400},{6\_210\_1600},{12\_90\_1800},{12\_180\_2000},{15\_30\_600},{15\_120\_1400},{21\_30\_800},{21\_30\_1400},{24\_30\_600},{30\_120\_1400}},
  ytick={0,2,4,6},
  yticklabels={{0},{2},{4},{6}},
  xticklabel style={font={\fontsize{12}{14.4}\selectfont},rotate=30,anchor=north east},
  yticklabel style={font={\fontsize{12}{14.4}\selectfont}},
  xlabel={Instance},
  ylabel={Gain (\%)},
  label style={font={\fontsize{14}{16.8}\selectfont}},
  title={MNL (full information): Residual vs. Direct Parameterizations.},
  title style={font={\fontsize{14}{16.8}\selectfont},at={(0.5,1)},anchor=south,yshift=3bp},
  ymajorgrids=true,
  xmajorgrids=false,
  grid style={gray!30,line width=0.8bp},
  axis background/.style={fill=white},
  unbounded coords=jump,
  legend columns=2,
  legend style={at={(0.990977,0.967538)},anchor=north east,font={\fontsize{8.5}{10.2}\selectfont},draw=black!20,fill=white,fill opacity=0.8,text opacity=1,rounded corners=1bp,inner sep=3bp,column sep=8bp,/tikz/every even column/.append style={column sep=8bp}},
  legend cell align=left
]
\addplot[color={rgb,1:red,0.066666666666666666;green,0.066666666666666666;blue,0.066666666666666666},line width=2bp,mark=none,opacity=0.85,forget plot] coordinates {(-0.45000000000000001,0) (9.4499999999999993,0)};
\addplot[color={rgb,1:red,0.77647058823529413;green,0.15686274509803921;blue,0.15686274509803921},line width=2.1bp,dash pattern=on 7.77bp off 3.36bp,dash phase=0bp,mark=*,mark size=3.25bp,mark options={solid},opacity=1] coordinates {
  (0,0.71790824486209359)
  (1,3.6518691317687337)
  (2,5.250959629562991)
  (3,1.237413268825595)
  (4,2.8922871774251644)
  (5,1.7852252423910258)
  (6,5.475781494635795)
  (7,0.34791870300651168)
  (8,3.4710353850600839)
  (9,0.43874715244628087)
};
\addlegendentry{residual}
\addplot[color={rgb,1:red,0.082352941176470587;green,0.396078431372549;blue,0.75294117647058822},line width=2.1bp,dash pattern=on 13.44bp off 3.36bp on 2.1bp off 3.36bp,dash phase=0bp,mark=square*,mark size=3.25bp,mark options={solid},opacity=1] coordinates {
  (0,0.68466815661294733)
  (1,3.4428217923038167)
  (2,1.4770858994585094)
  (3,0.96210263191407397)
  (4,4.1020191303759832)
  (5,2.995499763726289)
  (6,6.1990186077650318)
  (7,-1.386181869039925)
  (8,3.9450219584258588)
  (9,0.25309499975036109)
};
\addlegendentry{direct}
\end{axis}
\end{scope}
\begin{scope}[shift={(542.4bp,250.8bp)}]
\begin{axis}[
  at={(56.4725bp,80.4058bp)},
  anchor=south west,
  scale only axis,
  width=462.287bp,
  height=130.922bp,
  xmin=-0.45000000000000001,
  xmax=9.4499999999999993,
  ymin=-29.29524313321965,
  ymax=52.962484373223333,
  enlargelimits=false,
  axis lines=box,
  axis line style={line width=0.8bp},
  tick align=outside,
  tick pos=left,
  major tick length=3.5bp,
  tick style={black,line width=0.8bp},
  scaled ticks=false,
  xtick={0,1,2,3,4,5,6,7,8,9},
  xticklabels={{6\_150\_400},{6\_210\_1600},{12\_90\_1800},{12\_180\_2000},{15\_30\_600},{15\_120\_1400},{21\_30\_800},{21\_30\_1400},{24\_30\_600},{30\_120\_1400}},
  ytick={-20,0,20,40},
  yticklabels={{-20},{0},{20},{40}},
  xticklabel style={font={\fontsize{12}{14.4}\selectfont},rotate=30,anchor=north east},
  yticklabel style={font={\fontsize{12}{14.4}\selectfont}},
  xlabel={Instance},
  ylabel={Gain (\%)},
  label style={font={\fontsize{14}{16.8}\selectfont}},
  title={\MMNL (full information): Residual vs. Direct Parameterizations.},
  title style={font={\fontsize{14}{16.8}\selectfont},at={(0.5,1)},anchor=south,yshift=3bp},
  ymajorgrids=true,
  xmajorgrids=false,
  grid style={gray!30,line width=0.8bp},
  axis background/.style={fill=white},
  unbounded coords=jump,
  legend columns=2,
  legend style={at={(0.00919341,0.967538)},anchor=north west,font={\fontsize{8.5}{10.2}\selectfont},draw=black!20,fill=white,fill opacity=0.8,text opacity=1,rounded corners=1bp,inner sep=3bp,column sep=8bp,/tikz/every even column/.append style={column sep=8bp}},
  legend cell align=left
]
\addplot[color={rgb,1:red,0.066666666666666666;green,0.066666666666666666;blue,0.066666666666666666},line width=2bp,mark=none,opacity=0.85,forget plot] coordinates {(-0.45000000000000001,0) (9.4499999999999993,0)};
\addplot[color={rgb,1:red,0.77647058823529413;green,0.15686274509803921;blue,0.15686274509803921},line width=2.1bp,dash pattern=on 7.77bp off 3.36bp,dash phase=0bp,mark=*,mark size=3.25bp,mark options={solid},opacity=1] coordinates {
  (0,33.977288929111957)
  (1,9.3357659071340215)
  (2,9.7905410933458086)
  (3,16.379393440822561)
  (4,18.680816144572635)
  (5,18.879668691024712)
  (6,46.026262613531287)
  (7,28.313748398003948)
  (8,49.223496759294108)
  (9,41.65358936670907)
};
\addlegendentry{residual}
\addplot[color={rgb,1:red,0.082352941176470587;green,0.396078431372549;blue,0.75294117647058822},line width=2.1bp,dash pattern=on 13.44bp off 3.36bp on 2.1bp off 3.36bp,dash phase=0bp,mark=square*,mark size=3.25bp,mark options={solid},opacity=1] coordinates {
  (0,24.988002813642595)
  (1,-2.3008134410614951)
  (2,-22.111806547320221)
  (3,-25.556255519290424)
  (4,5.458225272972963)
  (5,-8.5491937283543145)
  (6,36.704270485230751)
  (7,-22.786033607415568)
  (8,9.8027113220320956)
  (9,-0.48240884891170399)
};
\addlegendentry{direct}
\end{axis}
\end{scope}
\begin{scope}[shift={(0bp,0bp)}]
\begin{axis}[
  at={(56.4725bp,80.4058bp)},
  anchor=south west,
  scale only axis,
  width=462.287bp,
  height=130.922bp,
  xmin=-0.45000000000000001,
  xmax=9.4499999999999993,
  ymin=-24.5314606270009,
  ymax=5.5688941848591043,
  enlargelimits=false,
  axis lines=box,
  axis line style={line width=0.8bp},
  tick align=outside,
  tick pos=left,
  major tick length=3.5bp,
  tick style={black,line width=0.8bp},
  scaled ticks=false,
  xtick={0,1,2,3,4,5,6,7,8,9},
  xticklabels={{6\_150\_400},{6\_210\_1600},{12\_90\_1800},{12\_180\_2000},{15\_30\_600},{15\_120\_1400},{21\_30\_800},{21\_30\_1400},{24\_30\_600},{30\_120\_1400}},
  ytick={-20,-10,0},
  yticklabels={{-20},{-10},{0}},
  xticklabel style={font={\fontsize{12}{14.4}\selectfont},rotate=30,anchor=north east},
  yticklabel style={font={\fontsize{12}{14.4}\selectfont}},
  xlabel={Instance},
  ylabel={Gain (\%)},
  label style={font={\fontsize{14}{16.8}\selectfont}},
  title={MNL (data-driven): Residual vs. Direct Parameterizations.},
  title style={font={\fontsize{14}{16.8}\selectfont},at={(0.5,1)},anchor=south,yshift=3bp},
  ymajorgrids=true,
  xmajorgrids=false,
  grid style={gray!30,line width=0.8bp},
  axis background/.style={fill=white},
  unbounded coords=jump,
  legend columns=2,
  legend style={at={(0.00919341,0.149192)},anchor=north west,font={\fontsize{8.5}{10.2}\selectfont},draw=black!20,fill=white,fill opacity=0.8,text opacity=1,rounded corners=1bp,inner sep=3bp,column sep=8bp,/tikz/every even column/.append style={column sep=8bp}},
  legend cell align=left
]
\addplot[color={rgb,1:red,0.066666666666666666;green,0.066666666666666666;blue,0.066666666666666666},line width=2bp,mark=none,opacity=0.85,forget plot] coordinates {(-0.45000000000000001,0) (9.4499999999999993,0)};
\addplot[color={rgb,1:red,0.77647058823529413;green,0.15686274509803921;blue,0.15686274509803921},line width=2.1bp,dash pattern=on 7.77bp off 3.36bp,dash phase=0bp,mark=*,mark size=3.25bp,mark options={solid},opacity=1] coordinates {
  (0,0.83037647710698692)
  (1,4.2006962388654676)
  (2,0.82566651777045108)
  (3,0.68880742916839188)
  (4,1.1003942395513839)
  (5,0.87151917193581674)
  (6,0.77340993682643622)
  (7,1.2230984777418308)
  (8,0.68760429797854439)
  (9,0.36242827922492837)
};
\addlegendentry{residual}
\addplot[color={rgb,1:red,0.082352941176470587;green,0.396078431372549;blue,0.75294117647058822},line width=2.1bp,dash pattern=on 13.44bp off 3.36bp on 2.1bp off 3.36bp,dash phase=0bp,mark=square*,mark size=3.25bp,mark options={solid},opacity=1] coordinates {
  (0,0.86092059863964387)
  (1,3.3343461860400883)
  (2,-3.4697316366676239)
  (3,0.91474631883275037)
  (4,-2.3074663021393369)
  (5,1.3972523258011755)
  (6,-2.2636404213622936)
  (7,-23.163262681007264)
  (8,0.79532808438405478)
  (9,0.28532677426549941)
};
\addlegendentry{direct}
\end{axis}
\end{scope}
\begin{scope}[shift={(542.4bp,0bp)}]
\begin{axis}[
  at={(56.4725bp,80.4058bp)},
  anchor=south west,
  scale only axis,
  width=462.287bp,
  height=130.922bp,
  xmin=-0.45000000000000001,
  xmax=9.4499999999999993,
  ymin=-24.1459910401108,
  ymax=16.710051629328898,
  enlargelimits=false,
  axis lines=box,
  axis line style={line width=0.8bp},
  tick align=outside,
  tick pos=left,
  major tick length=3.5bp,
  tick style={black,line width=0.8bp},
  scaled ticks=false,
  xtick={0,1,2,3,4,5,6,7,8,9},
  xticklabels={{6\_150\_400},{6\_210\_1600},{12\_90\_1800},{12\_180\_2000},{15\_30\_600},{15\_120\_1400},{21\_30\_800},{21\_30\_1400},{24\_30\_600},{30\_120\_1400}},
  ytick={-20,-10,0,10},
  yticklabels={{-20},{-10},{0},{10}},
  xticklabel style={font={\fontsize{12}{14.4}\selectfont},rotate=30,anchor=north east},
  yticklabel style={font={\fontsize{12}{14.4}\selectfont}},
  xlabel={Instance},
  ylabel={Gain (\%)},
  label style={font={\fontsize{14}{16.8}\selectfont}},
  title={\MMNL (data-driven): Residual vs. Direct Parameterizations.},
  title style={font={\fontsize{14}{16.8}\selectfont},at={(0.5,1)},anchor=south,yshift=3bp},
  ymajorgrids=true,
  xmajorgrids=false,
  grid style={gray!30,line width=0.8bp},
  axis background/.style={fill=white},
  unbounded coords=jump,
  legend columns=2,
  legend style={at={(0.37245,0.149192)},anchor=north west,font={\fontsize{8.5}{10.2}\selectfont},draw=black!20,fill=white,fill opacity=0.8,text opacity=1,rounded corners=1bp,inner sep=3bp,column sep=8bp,/tikz/every even column/.append style={column sep=8bp}},
  legend cell align=left
]
\addplot[color={rgb,1:red,0.066666666666666666;green,0.066666666666666666;blue,0.066666666666666666},line width=2bp,mark=none,opacity=0.85,forget plot] coordinates {(-0.45000000000000001,0) (9.4499999999999993,0)};
\addplot[color={rgb,1:red,0.77647058823529413;green,0.15686274509803921;blue,0.15686274509803921},line width=2.1bp,dash pattern=on 7.77bp off 3.36bp,dash phase=0bp,mark=*,mark size=3.25bp,mark options={solid},opacity=1] coordinates {
  (0,10.816237355800071)
  (1,2.5460890862890806)
  (2,-0.98590612194628002)
  (3,14.852958780718003)
  (4,-0.41108074367543651)
  (5,12.574022311785622)
  (6,6.3846392231433677)
  (7,11.280265041297694)
  (8,7.7903829360329846)
  (9,6.7755930567049063)
};
\addlegendentry{residual}
\addplot[color={rgb,1:red,0.082352941176470587;green,0.396078431372549;blue,0.75294117647058822},line width=2.1bp,dash pattern=on 13.44bp off 3.36bp on 2.1bp off 3.36bp,dash phase=0bp,mark=square*,mark size=3.25bp,mark options={solid},opacity=1] coordinates {
  (0,-1.2062034585497097)
  (1,-2.300893547237211)
  (2,-19.75726694635506)
  (3,-22.288898191499904)
  (4,-12.515516669111808)
  (5,-9.2491647228081693)
  (6,-1.4360849927481398)
  (7,-19.699942412051769)
  (8,2.2608041802496506)
  (9,-3.473550823609004)
};
\addlegendentry{direct}
\end{axis}
\end{scope}
\pgfresetboundingbox
\path[use as bounding box] (0,0) rectangle (1075.2bp,492bp);
\end{tikzpicture}

    }
\caption{Validation performance of residual and direct parameterizations.
Each point reports the revenue gain relative to the validation-selected baseline (normalized to zero).
For each model class, the displayed result corresponds to its best
validation configuration.}
\label{fig:residual_comparison}
\end{figure}

Figure~\ref{fig:residual_comparison} compares the best residual and direct specifications within each experimental setting. Residual learning provides the clearest gains in the \MMNL environments, where direct prediction is often unstable and can underperform the baseline. The difference is smaller in the MNL environments, although residual models remain more robust across instances. We therefore keep residual parameterizations for the main experiments. Detailed comparisons of the reference policy and residual form (additive, multiplicative) are reported in Appendix~\ref{app:model_selection}.

\paragraph{Computational environment.}
All experiments were run on a workstation with Ubuntu 24.04.1, equipped with an Intel Core i9-14900K CPU, 16 GB of memory, and an NVIDIA GeForce RTX 4070 GPU with 12 GB of memory.
The computational environment used Python 3.12.9 and Julia 1.10.7.

\paragraph{Implementation details.}
Our \PNN and \OCNN are trained on anticipative samples generated from 100 trajectories, as described in Section \ref{subsec:anticipative scenario and masking}. During validation, we select the best-performing DFL configuration and the best-performing baseline policy for each instance based on average revenue over 30 sampled trajectories. Then, the selected architecture configurations are frozen before testing.
The frozen configurations and baselines are evaluated on 100 independent test trajectories generated from the ground-truth demand environment $\mathbb{P}$.

\section{Numerical Results} \label{sec:numerical_results}
In this section, we evaluate the performance of the developed  \PNN and \OCNN policies under both full-information and data-driven settings.
We report the testing results of our DFL architectures and baseline policies in the full-information setting (Section~\ref{subsec:full_information_results}) and the data-driven setting (Section~\ref{subsec:data_driven_results}) under MNL and \MMNL environments.

We compare our proposed policies with several baseline policies (as presented in Section \ref{sec:baselines}). For full-information small MNL instances, we consider the optimal dynamic pricing policy derived by DP \citep{dong2009dynamic} and report optimality gaps. The performance of a policy is measured by the relative improvement over the validation-selected baseline policy:
\[
\mathrm{Gain}(\%)
=
\frac{\mathrm{Rev}^{p}-\mathrm{Rev}^{b}}
{\mathrm{Rev}^{b}}
\times 100\%,
\]
where $\mathrm{Rev}^{p}$ is the mean test revenue of the evaluated policy and $\mathrm{Rev}^{b}$ is that of the validation-selected baseline, both computed over sampled trajectories. A positive value indicates that the considered policy outperforms the validation-selected baseline.

\subsection{Full-Information Setting} \label{subsec:full_information_results}
\subsubsection{On small tractable MNL instances} \label{subsubsec:on small tractable MNL instances}
We first consider small tractable MNL instances, for which the optimal pricing policy can be computed exactly by DP. This allows us to use the optimal expected revenue as an exact baseline for evaluating the proposed policies.
Table \ref{tab:small_mnl_exact_dp_summary} summarizes results on small MNL instances, where green highlights the smallest optimality gap in each row. The results show that our decision-focused policies outperform other DP-based baselines, with an average optimality gap of 0.4\% (for \PNN) and 0.3\% (for \OCNN). Among the baseline methods, \TimeIndUnified and \TimeDepUnified exhibit the worst performance. This is expected because they impose a common price across all items at each time step and therefore cannot exploit item-level heterogeneity, such as differences in product attractiveness, inventory scarcity, and opportunity costs. Moreover, across all baseline families, the time-dependent variants generally outperform their time-independent counterparts. This improvement is intuitive, as time-dependent baseline model parameters can better capture changes in demand conditions and inventory value over the selling horizon.

\definecolor{evalgray}{gray}{0.95}

\begin{table}[htbp]
\centering
\caption{Testing results on small MNL instances, reported as optimality gaps relative to the DP optimal value, computed as $(\mathrm{Opt}-\mathrm{Policy})/\mathrm{Opt}\times100\%$. \textcolor{posgreen}{\textbf{Green}} indicates the best result.}
\label{tab:small_mnl_exact_dp_summary}

\begin{threeparttable}
\small
\begin{tabular}{>{\bfseries}l c c c c c c c c}
\toprule
Instance
& \TimeIndItinerary
& \TimeDepItinerary
& \TimeIndApp
& \TimeDepApp
& \TimeIndUnified
& \TimeDepUnified
& \PNN
& \OCNN \\
\midrule

(3,5,50)
& 5.7\%
& 5.9\%
& 6.2\%
& 3.6\%
& 23.4\%
& 10.1\%
& \textcolor{posgreen}{\textbf{0.2\%}}
& \textcolor{posgreen}{\textbf{0.2\%}} \\

(3,10,50)
& 7.3\%
& 6.7\%
& 10.2\%
& 2.6\%
& 27.5\%
& 13.1\%
& 0.6\%
& \textcolor{posgreen}{\textbf{0.4\%}} \\

(3,15,50)
& 7.3\%
& 4.0\%
& 11.4\%
& 1.7\%
& 23.1\%
& 14.3\%
& 0.4\%
& \textcolor{posgreen}{\textbf{0.3\%}} \\

\midrule
\textit{Mean gap}
& 6.8\%
& 5.5\%
& 9.2\%
& 2.6\%
& 24.7\%
& 12.5\%
& 0.4\%
& \textcolor{posgreen}{\textbf{0.3\%}} \\

\bottomrule
\end{tabular}
\end{threeparttable}
\end{table}

\begin{result}
Our \PNN and \OCNN policies achieve near-optimal performance on small tractable MNL instances, with average optimality gaps of 0.4\% and 0.3\%, respectively.
\end{result}

\subsubsection{On large-scale MNL and \MMNL instances}
We further evaluate our methods on large-scale and intractable instances with full information about the true demand environment. In this setting, the baseline selected during validation also turns out to be the best-performing baseline in testing. Table~\ref{tab:summary:fi} summarizes the testing performance of the baseline policies and our architectures (selected during validation) under both the MNL and \MMNL environments. The validation-selected baseline is highlighted in gray, with its revenue normalized to 0\%. Green highlights the test-best result among all evaluated policies in each row. Mean test revenues of all pricing policies are reported in Table~\ref{tab:full_summary:fi} in Appendix~\ref{appendix:full-info testing}.

From Table~\ref{tab:summary:fi}, we observe that our decision-focused policies outperform the DP-based baselines across all instances. In the MNL settings, the performance gap is relatively modest, with an average gain of 2.9\%, consistent with the performance on small MNL instances in Section \ref{subsubsec:on small tractable MNL instances}, suggesting that the selected baselines already provide effective pricing policies when the underlying choice model is well aligned with their MNL-based design. In contrast, under the more complex \MMNL settings, the advantage of our methods becomes substantially larger, with the average gain increasing to 27.2\%. This result highlights the greater robustness of the proposed DFL framework to more complex choice environments. While the baseline policies are developed primarily under MNL assumptions and may lose effectiveness when the underlying demand model deviates from this structure, our DFL architectures can adapt more effectively to such model mismatch.

\begin{table}[ht]
\centering
\small
\caption{Full-information testing results relative to the \colorbox{evalgray}{\strut validation-selected baselines} (0\%, highlighted in gray). ``Ours'' indicates the testing results of the validation-selected configuration. \textcolor{posgreen}{\textbf{Green}} indicates the best result.}
\label{tab:summary:fi}
\begin{tabular}{>{\bfseries}l c c c c c c c}
\toprule
Instance
& \TimeIndItinerary
& \TimeDepItinerary
& \TimeIndApp
& \TimeDepApp
& \TimeIndUnified
& \TimeDepUnified
& \textbf{Ours} \\
\midrule

\rowcolor{subgray}
\multicolumn{8}{c}{\textbf{MNL} (full information)} \\

(6,150,400)
& -10.5\%
& -3.1\%
& -0.1\%
& \cellcolor{evalgray}0\%
& -10.6\%
& -10.6\%
& \textcolor{posgreen}{\textbf{+0.7\%}} \\

(6,210,1600)
& -17.1\%
& -0.4\%
& -18.5\%
& \cellcolor{evalgray}0\%
& -5.4\%
& -15.4\%
& \textcolor{posgreen}{\textbf{+3.7\%}} \\

(12,90,1800)
& -27.3\%
& -2.8\%
& -31.2\%
& \cellcolor{evalgray}0\%
& -6.3\%
& -24.3\%
& \textcolor{posgreen}{\textbf{+5.3\%}} \\

(12,180,2000)
& -18.9\%
& -4.1\%
& -15.9\%
& \cellcolor{evalgray}0\%
& -9.9\%
& -15.3\%
& \textcolor{posgreen}{\textbf{+1.2\%}} \\

(15,30,600)
& -25.6\%
& -5.3\%
& -23.3\%
& \cellcolor{evalgray}0\%
& -7.5\%
& -16.0\%
& \textcolor{posgreen}{\textbf{+4.1\%}} \\

(15,120,1400)
& -23.7\%
& -4.1\%
& -19.3\%
& \cellcolor{evalgray}0\%
& -11.0\%
& -19.8\%
& \textcolor{posgreen}{\textbf{+3.0\%}} \\

(21,30,800)
& -28.2\%
& -4.2\%
& -23.7\%
& \cellcolor{evalgray}0\%
& -3.0\%
& -15.8\%
& \textcolor{posgreen}{\textbf{+6.2\%}} \\

(21,30,1400)
& -24.4\%
& \cellcolor{evalgray}0\%
& -33.1\%
& -1.4\%
& -9.7\%
& -26.9\%
& \textcolor{posgreen}{\textbf{+0.3\%}} \\

(24,30,600)
& -29.6\%
& -5.2\%
& -16.1\%
& \cellcolor{evalgray}0\%
& -8.7\%
& -16.5\%
& \textcolor{posgreen}{\textbf{+3.9\%}} \\

(30,120,1400)
& -22.8\%
& -6.9\%
& -3.3\%
& \cellcolor{evalgray}0\%
& -19.5\%
& -19.5\%
& \textcolor{posgreen}{\textbf{+0.4\%}} \\
\hline
\textbf{Mean}
& -22.8\%
& -3.6\%
& -18.5\%
& -0.1\%
& -9.2\%
& -18.0\%
& \textcolor{posgreen}{\textbf{+2.9\%}} \\

\midrule

\rowcolor{subgray}
\multicolumn{8}{c}{\textbf{\MMNL} (full information)} \\

(6,150,400)
& -2.8\%
& \cellcolor{evalgray}0\%
& -29.7\%
& -29.7\%
& -16.0\%
& -16.0\%
& \textcolor{posgreen}{\textbf{+34.0\%}} \\

(6,210,1600)
& \cellcolor{evalgray}0\%
& -1.1\%
& -1.5\%
& -5.3\%
& -12.3\%
& -0.6\%
& \textcolor{posgreen}{\textbf{+9.3\%}} \\

(12,90,1800)
& \cellcolor{evalgray}0\%
& -5.7\%
& -5.9\%
& -14.5\%
& -52.5\%
& -17.5\%
& \textcolor{posgreen}{\textbf{+9.8\%}} \\

(12,180,2000)
& \cellcolor{evalgray}0\%
& -1.4\%
& -24.6\%
& -31.0\%
& -59.0\%
& -35.3\%
& \textcolor{posgreen}{\textbf{+16.4\%}} \\

(15,30,600)
& \cellcolor{evalgray}0\%
& -11.1\%
& -15.6\%
& -6.9\%
& -25.3\%
& -11.3\%
& \textcolor{posgreen}{\textbf{+18.7\%}} \\

(15,120,1400)
& -3.1\%
& \cellcolor{evalgray}0\%
& -35.6\%
& -16.1\%
& -45.2\%
& -4.9\%
& \textcolor{posgreen}{\textbf{+18.9\%}} \\

(21,30,800)
& -12.7\%
& -22.0\%
& -11.1\%
& \cellcolor{evalgray}0\%
& -35.8\%
& -10.8\%
& \textcolor{posgreen}{\textbf{+46.0\%}} \\

(21,30,1400)
& -14.5\%
& -20.4\%
& -9.7\%
& \cellcolor{evalgray}0\%
& -28.7\%
& -19.2\%
& \textcolor{posgreen}{\textbf{+28.3\%}} \\

(24,30,600)
& -15.1\%
& -24.8\%
& -8.4\%
& \cellcolor{evalgray}0\%
& -28.6\%
& -19.5\%
& \textcolor{posgreen}{\textbf{+49.2\%}} \\

(30,120,1400)
& \cellcolor{evalgray}0\%
& -23.9\%
& -12.5\%
& -14.1\%
& -28.3\%
& -28.3\%
& \textcolor{posgreen}{\textbf{+41.7\%}} \\

\hline
\textbf{Mean}
& -4.8\%
& -11.0\%
& -15.5\%
& -11.8\%
& -33.2\%
& -16.3\%
& \textcolor{posgreen}{\textbf{+27.2\%}} \\

\bottomrule
\end{tabular}
\end{table}

We further compare the two proposed variants, \OCNN and \PNN, in Table~\ref{tab:odfl_pdfl_diff_fullinfo}, and observe a similar pattern. Under the full-information MNL setting, \OCNN generally outperforms \PNN, reflecting the benefit of explicitly incorporating structural results derived from the MNL model into the policy architecture. Under \MMNL, however, this MNL-specific structure becomes less aligned with the more heterogeneous substitution patterns and price sensitivities, and \PNN consistently achieves better performance. This reversal highlights the value of a more flexible architecture when the underlying customer-choice behavior deviates from the MNL structure. More broadly, these results reveal a trade-off between structural specialization and model flexibility: embedding RM structure can improve performance when the assumed choice model is well specified, whereas a more flexible architecture becomes advantageous under model mismatch.

\definecolor{poscolor}{RGB}{230,126,34}   %
\definecolor{negcolor}{RGB}{42,157,143}   %

\begin{table}[htbp]
\centering
\setlength{\tabcolsep}{2.5pt}
\renewcommand{\arraystretch}{1.15}

\caption{Relative revenue differences between ODFL and PDFL under full information, computed as $(\mathrm{ODFL}-\mathrm{PDFL})/\mathrm{PDFL}\times100\%$. More \textcolor{poscolor}{\textbf{orange}} favors ODFL, while more \textcolor{negcolor}{\textbf{teal}} favors PDFL.}
\label{tab:odfl_pdfl_diff_fullinfo}

\resizebox{\textwidth}{!}{
\begin{tabular}{@{}l*{10}{c}@{}}
\toprule
\textbf{Instance}
& \rotatebox[origin=l]{45}{\textbf{(6,150,400)}}
& \rotatebox[origin=l]{45}{\textbf{(6,210,1600)}}
& \rotatebox[origin=l]{45}{\textbf{(12,90,1800)}}
& \rotatebox[origin=l]{45}{\textbf{(12,180,2000)}}
& \rotatebox[origin=l]{45}{\textbf{(15,30,600)}}
& \rotatebox[origin=l]{45}{\textbf{(15,120,1400)}}
& \rotatebox[origin=l]{45}{\textbf{(21,30,800)}}
& \rotatebox[origin=l]{45}{\textbf{(21,30,1400)}}
& \rotatebox[origin=l]{45}{\textbf{(24,30,600)}}
& \rotatebox[origin=l]{45}{\textbf{(30,120,1400)}} \\
\midrule

\textbf{MNL}
& \cellcolor{poscolor!6}  +0.4\%
& \cellcolor{poscolor!8}  +1.6\%
& \cellcolor{poscolor!6}  +0.6\%
& \cellcolor{poscolor!7}  +1.1\%
& \cellcolor{poscolor!7}  +1.2\%
& \cellcolor{poscolor!9}  +2.2\%
& \cellcolor{poscolor!6}  +0.8\%
& \cellcolor{negcolor!8}  -1.7\%
& \cellcolor{poscolor!8}  +1.9\%
& \cellcolor{poscolor!6}  +0.7\% \\

\textbf{MixMNL}
& \cellcolor{negcolor!41} -23.8\%
& \cellcolor{negcolor!16} -7.2\%
& \cellcolor{negcolor!26} -14.1\%
& \cellcolor{negcolor!48} -28.6\%
& \cellcolor{negcolor!31} -17.5\%
& \cellcolor{negcolor!33} -18.5\%
& \cellcolor{negcolor!40} -23.3\%
& \cellcolor{negcolor!27} -14.8\%
& \cellcolor{negcolor!48} -28.5\%
& \cellcolor{negcolor!52} -31.0\% \\

\bottomrule
\end{tabular}
}
\end{table}

\begin{result}
Our DFL policies consistently outperform the DP-based baselines, and the results reveal a clear trade-off between structural specialization and flexibility. Under MNL, the baselines remain competitive and \OCNN generally outperforms \PNN, as their MNL-specific structures are well matched to the environment. Under \MMNL, this structural advantage diminishes: our performance advantage over the baselines increases substantially, while the more flexible \PNN outperforms \OCNN.
\end{result}

\begin{table}[htbp]
\centering
\small
\caption{Data-driven testing results relative to the \colorbox{evalgray}{\strut validation-selected baselines} (0\%, highlighted in gray). ``Ours'' indicates testing results of the validation-selected configuration. \textcolor{posgreen}{\textbf{Green}} indicates the best result.}
\label{tab: data-driven overall results}
\begin{threeparttable}
\small
\begin{tabular}{>{\bfseries}l c c c c c c c}
\toprule
Instance
& \TimeIndItinerary
& \TimeDepItinerary
& \TimeIndApp
& \TimeDepApp
& \TimeIndUnified
& \TimeDepUnified
& \textbf{Ours} \\
\midrule

\rowcolor{subgray}
\multicolumn{8}{c}{\textbf{MNL} (data-driven)} \\

(6,150,400)
& -12.7\%
& -12.2\%
& 0\%
& \cellcolor{evalgray}0\%
& -13.8\%
& -13.8\%
& \textcolor{posgreen}{\textbf{+0.9\%}} \\

(6,210,1600)
& -21.1\%
& -10.0\%
& -18.7\%
& \cellcolor{evalgray}0\%
& -4.8\%
& -16.9\%
& \textcolor{posgreen}{\textbf{+4.2\%}} \\

(12,90,1800)
& -34.0\%
& -4.1\%
& -29.9\%
& \cellcolor{evalgray}0\%
& -18.3\%
& -37.1\%
& \textcolor{posgreen}{\textbf{+0.8\%}} \\

(12,180,2000)
& -24.6\%
& -16.8\%
& -15.6\%
& \cellcolor{evalgray}0\%
& -15.6\%
& -19.4\%
& \textcolor{posgreen}{\textbf{+0.9\%}} \\

(15,30,600)
& -28.9\%
& -8.7\%
& -18.7\%
& \cellcolor{evalgray}0\%
& -8.9\%
& -28.4\%
& \textcolor{posgreen}{\textbf{+1.1\%}} \\

(15,120,1400)
& -30.1\%
& -20.5\%
& -18.2\%
& \cellcolor{evalgray}0\%
& -18.2\%
& -24.6\%
& \textcolor{posgreen}{\textbf{+1.4\%}} \\

(21,30,800)
& -29.9\%
& -13.2\%
& -18.4\%
& \cellcolor{evalgray}0\%
& -16.9\%
& -30.6\%
& \textcolor{posgreen}{\textbf{+0.8\%}} \\

(21,30,1400)
& \textcolor{posgreen}{\textbf{+34.7\%}}
& -5.5\%
& +34.2\%
& \cellcolor{evalgray}0\%
& +8.8\%
& -36.6\%
& +23.1\% \\

(24,30,600)
& -34.0\%
& -36.2\%
& -14.2\%
& \cellcolor{evalgray}0\%
& -17.2\%
& -22.2\%
& \textcolor{posgreen}{\textbf{+0.8\%}} \\

(30,120,1400)
& -28.4\%
& -46.0\%
& -2.5\%
& \cellcolor{evalgray}0\%
& -22.5\%
& -22.5\%
& \textcolor{posgreen}{\textbf{+0.4\%}} \\

\hline
\textbf{Mean}
& -20.9\%
& -17.3\%
& -10.2\%
& 0\%
& -12.7\%
& -25.2\%
& \textcolor{posgreen}{\textbf{+3.4\%}} \\

\midrule

\rowcolor{subgray}
\multicolumn{8}{c}{\textbf{\MMNL} (data-driven)} \\

(6,150,400)
& -35.0\%
& -19.9\%
& -5.6\%
& \cellcolor{evalgray}0\%
& -16.0\%
& -16.0\%
& \textcolor{posgreen}{\textbf{+10.8\%}} \\

(6,210,1600)
& -11.9\%
& -7.8\%
& -6.1\%
& \cellcolor{evalgray}0\%
& -2.9\%
& -11.7\%
& \textcolor{posgreen}{\textbf{+2.5\%}} \\

(12,90,1800)
& -2.6\%
& +2.0\%
& \textcolor{posgreen}{\textbf{+4.5\%}}
& \cellcolor{evalgray}0\%
& -18.4\%
& -33.2\%
& +3.3\% \\

(12,180,2000)
& +15.5\%
& +5.1\%
& +8.2\%
& \cellcolor{evalgray}0\%
& -10.2\%
& -15.9\%
& \textcolor{posgreen}{\textbf{+26.5\%}} \\

(15,30,600)
& +9.0\%
& +26.8\%
& +18.8\%
& \textcolor{posgreen}{\textbf{+34.0\%}}
& +7.7\%
& \cellcolor{evalgray}0\%
& +33.5\% \\

(15,120,1400)
& -5.8\%
& -5.2\%
& -3.4\%
& \cellcolor{evalgray}0\%
& -18.4\%
& -20.2\%
& \textcolor{posgreen}{\textbf{+12.6\%}} \\

(21,30,800)
& -17.4\%
& +8.5\%
& -2.3\%
& +20.8\%
& +18.3\%
& \cellcolor{evalgray}0\%
& \textcolor{posgreen}{\textbf{+28.3\%}} \\

(21,30,1400)
& -27.2\%
& -1.7\%
& -17.0\%
& \cellcolor{evalgray}0\%
& -6.6\%
& -41.5\%
& \textcolor{posgreen}{\textbf{+11.3\%}} \\

(24,30,600)
& -22.6\%
& -25.7\%
& -4.6\%
& -1.3\%
& \cellcolor{evalgray}0\%
& -5.2\%
& \textcolor{posgreen}{\textbf{+7.8\%}} \\

(30,120,1400)
& -6.2\%
& -40.0\%
& -2.6\%
& \cellcolor{evalgray}0\%
& -8.8\%
& -8.8\%
& \textcolor{posgreen}{\textbf{+6.8\%}} \\

\hline
\textbf{Mean}
& -10.4\%
& -5.8\%
& -1.0\%
& +5.4\%
& -5.5\%
& -15.3\%
& \textcolor{posgreen}{\textbf{+14.3\%}} \\

\bottomrule
\end{tabular}
\end{threeparttable}
\end{table}

\definecolor{poscolor}{RGB}{230,126,34}   %
\definecolor{negcolor}{RGB}{42,157,143}   %

\begin{table}[tbp]
\centering
\setlength{\tabcolsep}{2.5pt}
\renewcommand{\arraystretch}{1.15}

\caption{Relative revenue differences between ODFL and PDFL in the data-driven setting, computed as $(\mathrm{ODFL}-\mathrm{PDFL})/\mathrm{PDFL}\times100\%$. More \textcolor{poscolor}{\textbf{orange}} favors ODFL, while more \textcolor{negcolor}{\textbf{teal}} favors PDFL.}
\label{tab:odfl_pdfl_diff_datadriven}

\resizebox{\textwidth}{!}{
\begin{tabular}{@{}l*{10}{c}@{}}
\toprule
\textbf{Instance}
& \rotatebox[origin=l]{45}{\textbf{(6,150,400)}}
& \rotatebox[origin=l]{45}{\textbf{(6,210,1600)}}
& \rotatebox[origin=l]{45}{\textbf{(12,90,1800)}}
& \rotatebox[origin=l]{45}{\textbf{(12,180,2000)}}
& \rotatebox[origin=l]{45}{\textbf{(15,30,600)}}
& \rotatebox[origin=l]{45}{\textbf{(15,120,1400)}}
& \rotatebox[origin=l]{45}{\textbf{(21,30,800)}}
& \rotatebox[origin=l]{45}{\textbf{(21,30,1400)}}
& \rotatebox[origin=l]{45}{\textbf{(24,30,600)}}
& \rotatebox[origin=l]{45}{\textbf{(30,120,1400)}} \\
\midrule

\textbf{MNL}
& \cellcolor{poscolor!6}  +0.5\%
& \cellcolor{poscolor!10} +1.4\%
& 0.0\%
& \cellcolor{poscolor!7}  +0.6\%
& \cellcolor{poscolor!5}  +0.2\%
& \cellcolor{poscolor!7}  +0.6\%
& \cellcolor{poscolor!4}  +0.1\%
& \cellcolor{negcolor!45} -17.1\%
& \cellcolor{poscolor!4}  +0.1\%
& \cellcolor{poscolor!4}  +0.1\% \\

\textbf{MixMNL}
& \cellcolor{poscolor!30} +5.5\%
& \cellcolor{negcolor!5}  -0.2\%
& \cellcolor{poscolor!9}  +1.2\%
& \cellcolor{negcolor!10} -1.7\%
& \cellcolor{poscolor!5}  +0.3\%
& \cellcolor{negcolor!16} -4.0\%
& 0.0\%
& \cellcolor{negcolor!9}  -1.5\%
& \cellcolor{negcolor!8}  -1.2\%
& \cellcolor{negcolor!6}  -0.7\% \\

\bottomrule
\end{tabular}
}
\end{table}

\subsection{Data-Driven Setting}
\label{subsec:data_driven_results}
We now consider the data-driven setting, where the ground-truth environment is unknown during policy construction and selection.
In this setting, the surrogate MNL model $\mathbb{Q}$ is estimated from observed price--choice data and is then used throughout the policy-development process:
it generates the anticipative training trajectories for the DFL architectures, provides the parameters required by JoPri and JoComPri, and serves as the validation environment for selecting the final policy configurations. Once selected, the policies are fixed and evaluated on independent trajectories generated from the ground-truth environment.

The resulting test performance is summarized in Table~\ref{tab: data-driven overall results}. Overall, we observe a pattern similar to that in the full-information setting reported in Table~\ref{tab:summary:fi}: our architectures consistently outperform the validation-selected baselines across all cases and achieve the highest testing revenue among all evaluated policies on 17 of the 20 instances. The improvement is relatively modest under MNL, with an average gain of \(3.4\%\), but becomes substantially larger under \MMNL, reaching \(14.3\%\) on average.

A similar pattern emerges from the comparison between \PNN and \OCNN in Table~\ref{tab:odfl_pdfl_diff_datadriven}. Under MNL, \OCNN generally outperforms \PNN, benefiting from its MNL-based structural design. Under \MMNL, however, \PNN exhibits more stable and generally stronger performance, suggesting that its greater flexibility is advantageous when the underlying choice behavior deviates from the MNL structure.

It is worth noting that, since validation is conducted using the fitted MNL model in the data-driven setting, the validation-selected policy need not perform best in the true testing environment. For example, on instance $(21,30,1400)$ in the MNL setting, \TimeIndItinerary achieves the highest testing revenue among the baselines, exceeding the validation-selected baseline by 34.7\%. This suggests a substantial mismatch between the fitted and true demand environments. A similar effect arises for our architecture on the same instance: the test-best configuration (Table \ref{tab:summary:dd}) improves upon the validation-selected baseline by 36.4\% and therefore also outperforms \TimeIndItinerary, despite not being selected during validation under the fitted model.

\begin{result}
Our DFL policy outperforms the validation-selected baseline on all 20 data-driven instances and achieves the highest testing revenue among all evaluated policies on 17 of the 20 instances.
\end{result}

\begin{table}[htbp]
\centering
\setlength{\tabcolsep}{1.5pt}
\renewcommand{\arraystretch}{1.15}

\caption{Relative differences of validation-selected baselines between the data-driven (DD) and full-information (FI) settings under \MMNL, computed as $(\mathrm{DD}-\mathrm{FI})/\mathrm{FI}\times100\%$. More \textcolor{poscolor}{\textbf{orange}} favors DD, while more \textcolor{negcolor}{\textbf{teal}} favors FI.}
\label{tab:fi_dd_validation_baseline_mixmnl}

\footnotesize
\begin{tabular}{@{}l*{10}{c}@{}}
\toprule
\textbf{Instance}
& \rotatebox[origin=l]{45}{\textbf{(6,150,400)}}
& \rotatebox[origin=l]{45}{\textbf{(6,210,1600)}}
& \rotatebox[origin=l]{45}{\textbf{(12,90,1800)}}
& \rotatebox[origin=l]{45}{\textbf{(12,180,2000)}}
& \rotatebox[origin=l]{45}{\textbf{(15,30,600)}}
& \rotatebox[origin=l]{45}{\textbf{(15,120,1400)}}
& \rotatebox[origin=l]{45}{\textbf{(21,30,800)}}
& \rotatebox[origin=l]{45}{\textbf{(21,30,1400)}}
& \rotatebox[origin=l]{45}{\textbf{(24,30,600)}}
& \rotatebox[origin=l]{45}{\textbf{(30,120,1400)}} \\
\midrule

Full-information
& 2,238.3
& 6,582.0
& 10,376.9
& 15,650.9
& 3,305.2
& 10,231.3
& 3,584.5
& 6,596.4
& 3,047.0
& 8,423.2 \\

Data-driven
& 2,682.7
& 7,182.6
& 11,198.6
& 12,935.1
& 2,993.8
& 11,010.5
& 4,179.7
& 7,821.8
& 3,953.8
& 10,238.1 \\

\midrule

\textbf{Diff. (\%)}
& \cellcolor{poscolor!40} +19.9\%
& \cellcolor{poscolor!21} +9.1\%
& \cellcolor{poscolor!19} +7.9\%
& \cellcolor{negcolor!45} -17.4\%
& \cellcolor{negcolor!27} -9.4\%
& \cellcolor{poscolor!18} +7.6\%
& \cellcolor{poscolor!35} +16.6\%
& \cellcolor{poscolor!38} +18.6\%
& \cellcolor{poscolor!55} +29.8\%
& \cellcolor{poscolor!43} +21.5\% \\

\bottomrule
\end{tabular}
\end{table}

An interesting pattern emerges when comparing the validation-selected baseline results in the full-information and data-driven settings under \MMNL as shown in Table \ref{tab:fi_dd_validation_baseline_mixmnl}. The validation-selected baseline in the data-driven setting achieves higher test revenue than in the full-information setting on 8 out of 10 instances.
Recall that, in the full-information setting, the MNL parameters used by the baseline pricing policies are obtained by minimizing the KL divergence between the MNL and the ground-truth \MMNL choice distributions.
In the data-driven setting, these parameters are instead estimated by maximum likelihood from a finite sample of observed customer choices. The former therefore provides a closer distributional approximation to the ground-truth \MMNL model.
Nevertheless, despite its less accurate distributional fit, the data-driven baseline often achieves better downstream performance.

This observation suggests that a better approximation of the underlying choice distribution does not necessarily translate into better decisions in the dynamic pricing problem.
Estimation errors that are unfavorable from a distributional perspective may nevertheless lead to parameter estimates that achieve better downstream revenue performance.
This finding also supports the motivation behind our architecture: rather than directly imitating prices or fitting the full choice distribution, we learn from anticipative customer-choice targets.
These targets provide a more compact and decision-oriented learning space that focuses directly on the decisions relevant to downstream pricing performance.

\begin{result}
Under \MMNL, the validation-selected baseline in the data-driven case outperforms its full-information counterpart on 8 out of 10 instances, suggesting that a better distributional fit does not necessarily lead to better downstream pricing decisions.
This further motivates learning from compact, decision-oriented customer-choice targets rather than directly imitating prices or the full choice distribution.
\end{result}

\begin{figure}[htbp]
  \centering
  \resizebox{0.8\textwidth}{!}{%
    


    }
  \caption{Analysis of (24, 30, 600) in the data-driven \MMNL setting, where our architecture achieves a 7.8\% gain over the validation-selected baseline.}
  \label{fig:case:MMNL:dd:good:24:30:600}
\end{figure}

To better understand the source of the performance gains achieved by our architectures, we conduct a trajectory-level comparison on a representative instance in the data-driven \MMNL setting.
We consider instance $(24,30,600)$, for which our DFL-based pricing policy achieves a $7.8\%$ revenue improvement over the validation-selected baseline (which is also the overall-best baseline in testing). Figure~\ref{fig:case:MMNL:dd:good:24:30:600} compares our pricing policy with the best baseline over the selling horizon in terms of the average, minimum, and maximum prices, as well as cumulative sales and cumulative revenue.
The blue solid line and gray dashed line represent the average trajectories of the selected DFL policy and the best baseline, respectively, while the shaded regions indicate variation across 100 trajectories. The comparison reveals a markedly different pricing pattern between the two policies.
While the baseline follows a relatively conservative pricing policy with decreasing prices throughout the horizon, the DFL-based policy maintains substantially higher average prices and a much wider price range. In particular, the maximum price posted by our policy remains larger than that of the baseline, whereas the minimum price is initially lower but eventually exceeds the baseline's minimum price toward the end of the horizon. These pricing differences translate into distinct sales dynamics. The baseline achieves higher cumulative sales over the entire horizon, indicating that its lower prices stimulate additional demand. However, our policy is able to extract considerably more revenue from each sale through more aggressive price differentiation. As a result, despite selling fewer units, the proposed policy gradually overtakes the baseline in cumulative revenue later in the selling horizon, and the revenue advantage continues to widen.

\begin{result}
Our DFL policy can better adapt to the heterogeneous demand structure under \MMNL, using stronger price differentiation to generate higher revenue despite fewer sales.
\end{result}

\section{Conclusion and Remarks} \label{sec:conclusion and extensions}
In this paper, we develop a structured DFL framework for dynamic pricing and inventory control with substitutable products. Rather than approximating the full Bellman value function or learning a completely structure-free policy, we replace the high-dimensional DP state-to-price mapping with simpler MNL-guided surrogate mappings.
We propose two architectures: a more flexible price-based architecture and a more structured opportunity-cost-based architecture. Both architectures can be cast as regularized linear optimization layers and trained within a common DFL framework.
Rather than imitating anticipative price decisions, we learn from anticipative customer choices and develop a perfect-information flow oracle to generate these targets in polynomial time.

Our numerical results provide consistent evidence for the effectiveness of the proposed DFL policies. On small tractable MNL instances, \PNN and \OCNN achieve near-optimal performance, with average optimality gaps of only 0.4\% and 0.3\%, respectively.
On larger instances, both policies consistently outperform the DP-based baselines, while revealing a clear trade-off between structural specialization and flexibility. Under MNL, where the embedded MNL structure is well aligned with the environment, the baselines remain competitive and the more structured \OCNN generally outperforms \PNN. Under heterogeneous \MMNL demand, this structural advantage weakens: the performance gains over the baselines become substantially larger, and the more flexible \PNN performs better than \OCNN. The same overall pattern persists in the data-driven setting, where the true choice environment is unknown.
Interestingly, under \MMNL, the data-driven baseline even outperforms its full-information counterpart on 8 out of 10 instances, illustrating that a more accurate approximation of the underlying choice distribution does not necessarily translate into better downstream pricing decisions. This finding reinforces the motivation for learning from compact, decision-oriented customer-choice targets rather than directly imitating prices or the full choice distribution. Our trajectory-level analysis further shows that the DFL policy adapts to heterogeneous demand through stronger price differentiation, generating higher revenue despite fewer sales.

Several directions remain for future research. The framework could be extended to richer revenue-management settings, including network RM, joint assortment and pricing, and environments with cancellations or no-shows. From a methodological perspective, a potential direction is to develop theoretical guarantees linking anticipative-choice supervision to the quality of nonanticipative policies.
More broadly, our study illustrates that DFL can be used to replace a computationally demanding optimization stage, while preserving useful RM problem structure and training directly for the decisions that ultimately matter.


\begin{APPENDICES}
\renewcommand{\theHsection}{appendix.\Alph{section}}

\newpage
\section{Proofs of Theoretical Results} \label{appendix:proofs}
\subsection{Proof of Proposition~\ref{prop:ocnn_regularizer_properties}}
\label{appendix:ocnn_regularizer_properties}

\begin{proof}
     By definition, $\Omega$ is smooth. Then, we show that $\Omega$ is a convex function. From the definition of $\Omega$, we have:
    \begin{align}
    &\frac{\partial^2\Omega}{\partial y_i \partial y_i} = \frac{1}{y_i}, i \in \Products, \quad \frac{\partial^2\Omega}{\partial y_i \partial y_j} = 0, j \neq i \nonumber\\
    &\frac{\partial^2\Omega}{\partial y_0 \partial y_0} = \frac{\sum_i y_i}{y_0^2}, \quad  \frac{\partial^2\Omega}{\partial y_0 \partial y_i} = \frac{-1}{y_0}, i \in \Products \nonumber
    \end{align}
    where $i$ and $j$ are available product indices. Then, the Hessian matrix of $\Omega$ is:
\begin{equation}
\setlength{\arraycolsep}{8pt}
H_{\Omega}=
\left(
\begin{array}{ccccc}
\sum_i y_i/y_0^2 & -1/y_0 & -1/y_0 & \cdots & -1/y_0\\
-1/y_0 & 1/y_1 & 0 & \cdots & 0\\
-1/y_0 & 0 & 1/y_2 & \cdots & 0\\
\vdots & \vdots & \vdots & \ddots & \vdots\\
-1/y_0 & 0 & 0 & \cdots & 1/y_n
\end{array}
\right).
\end{equation}
    Let $\boldsymbol z = ( z_0, z_1, \cdots, z_n)$ be a non-zero vector. Then,
    \begin{equation}
        \boldsymbol{z}H_{\Omega}\boldsymbol{z}^{\mathrm{T}} = \sum \limits_{i=1}^n y_i (\frac{z_i}{y_i} - \frac{z_0}{y_0})^2 \geq 0 \nonumber
    \end{equation}
    Therefore, $\Omega$ is convex on $\mathcal C(\boldsymbol I_t)$.

   Assuming that the domain of $\Omega$ is denoted by $\mathrm{dom}(\Omega)$ and the interior of $\mathrm{dom}(\Omega)$ is $\mathrm{int}(\mathrm{dom}(\Omega))$, we now prove that $\Omega$ is of Legendre type.

    Proving that $\Omega$ is a Legendre-type function is equivalent to proving that $\Omega$ satisfies the following conditions:
    \begin{itemize}
        \item [1)] $\mathrm{dom}(\Omega)$ is non-empty.
        \item [2)] $\Omega$ is differentiable throughout $\mathrm{int}(\mathrm{dom}(\Omega))$.
        \item [3)] $\lim_{j \rightarrow \infty}\nabla \Omega(\boldsymbol{y}^{(j)}) = +\infty$ for any sequence $\{\boldsymbol{y}^{(j)}\}$ contained in $\mathrm{dom}(\Omega)$ and converging to a boundary point of $\mathrm{dom}(\Omega)$.
        \item [4)] $\Omega$ is strictly convex on $\mathrm{int}(\mathrm{dom}(\Omega))$.
    \end{itemize}

From the definition of $\Omega$, we have
\begin{equation}
\mathrm{dom}(\Omega) = \mathcal{C}(\boldsymbol I_t) =  \left \{ \boldsymbol{y} \in\mathbb{R}_{\geq 0}^{\nb+1} \left |  \sum_{i\in \mathcal{A}(\boldsymbol{I}_t)\cup \{0\}} y_i = 1,\quad y_i =0 \text{ for } i \not\in \mathcal{A}(\boldsymbol{I}_t) \cup \{0\} \right. \right \}, \nonumber
\end{equation}
Then,  (1) and (2) hold directly, and we focus on proving (3) and (4).
    \begin{enumerate}
        \item To prove (3), we first calculate $\nabla \Omega(\boldsymbol y)$: \begin{equation}
       \nabla \Omega(\boldsymbol y) = \left(
        \begin{array}{c}
         \frac{-\sum_i y_i}{y_0} \\
            \log(y_1/y_0) + 1\\
            \log(y_2/y_0) + 1 \\
            \vdots\\
            \log(y_n/y_0) + 1           \nonumber
        \end{array}
        \right)
    \end{equation}
    Since  $\{\boldsymbol{y}^{(j)} = (y_0^{(j)},\ldots, y_\nb^{(j)})\}$ converges to a boundary point of $\mathrm{dom}(\Omega)=\mathcal{C}(\boldsymbol I_t)$, there must exist an $i\in \mathcal{A}\cup\{0\}$ such that $\lim_{j\rightarrow\infty} y_i^{(j)} = 0$. Two cases arise:
    \begin{itemize}
     \item If $i = 0$, then $\frac{-\sum_i y_i}{y_0} \rightarrow - \infty$ and we also have  $\lim_{j \rightarrow \infty}\nabla \Omega(\boldsymbol{y}^{(j)}) = +\infty$.
        \item If $i \in \mathcal{A}(\boldsymbol I_t)$, then $\log(y_i^{(j)}/y_0^{(j)}) + 1 \rightarrow - \infty$, and thus we have $\lim_{j \rightarrow \infty}\nabla \Omega(\boldsymbol{y}^{(j)}) = +\infty$.
    \end{itemize}
    \item To prove (4), we use the convexity of $\Omega$: for a non-zero vector $\boldsymbol{z} = (z_0, z_1, \cdots, z_n)\in  \mathcal{C}(\boldsymbol I_t)$, $\boldsymbol{z}H_{\Omega}\boldsymbol{z}^{\mathrm{T}} = \sum_i y_i (\frac{z_i}{y_i} - \frac{z_0}{y_0})^2 \geq 0$.
    Clearly, $\boldsymbol{z}H_{\Omega}\boldsymbol{z}^{\mathrm{T}} = 0$ for any non-negative vector $\boldsymbol z$. Then, we have $\boldsymbol{z}H_{\Omega}\boldsymbol{z}^{\mathrm{T}} > 0$ on $\mathrm{int}(\mathrm{dom}(\Omega))$ for  $\mathrm{dom}(\Omega)=\mathcal{C}(\boldsymbol I_t)$, and thus $\Omega$ is strictly convex on $\mathrm{int}(\mathrm{dom}(\Omega))$.
    \end{enumerate}

\end{proof}

\subsection{Convex Conjugate and Fenchel--Young Loss for \OCNN}
\label{appendix:ocnn_fy_loss}

\begin{proof}[Proof of Proposition~\ref{prop:ocnn-conjugate}]
Based on the definition of the convex conjugate of $\Omega$, we have
\[
\Omega^*(\boldsymbol \theta)
=
\max_{\boldsymbol y \in \mathcal{C}(\boldsymbol I_t)}
\left\{
\boldsymbol \theta^\top \boldsymbol y
-
\Omega(\boldsymbol y)
\right\}
=
\boldsymbol \theta^\top \hat{\boldsymbol y}
-
\Omega(\hat{\boldsymbol y}),
\]
where $\hat{\boldsymbol y} = (\hat{y}_0, \ldots, \hat{y}_\nb)$ with
\[
\hat{y}_i
=
\frac{1}{m_{\boldsymbol{\theta}}}
\exp(\theta_i - m_{\boldsymbol{\theta}}), \qquad i\in\mathcal A(\boldsymbol I_t),
\]
and
\[
\hat{y}_0
=
\frac{1}{m_{\boldsymbol{\theta}}}.
\]

Then,
\[
\begin{aligned}
\Omega(\hat{\boldsymbol y})
&=
\sum_{i\in\mathcal{A}(\boldsymbol I_t)}
\hat{y}_i
\left(
\log \hat{y}_i
-
\log \hat{y}_0
\right) \\
&=
\sum_{i\in\mathcal{A}(\boldsymbol I_t)}
\hat{y}_i
\log
\frac{
\frac{1}{m_{\boldsymbol{\theta}}}
\exp(\theta_i-m_{\boldsymbol{\theta}})
}{
\frac{1}{m_{\boldsymbol{\theta}}}
} \\
&=
\sum_{i\in\mathcal{A}(\boldsymbol I_t)}
\hat{y}_i
(\theta_i-m_{\boldsymbol{\theta}}) \\
&=
\boldsymbol \theta^\top \hat{\boldsymbol y}
-
m_{\boldsymbol{\theta}}
\sum_{i\in\mathcal{A}(\boldsymbol I_t)}
\hat{y}_i \\
&=
\boldsymbol \theta^\top \hat{\boldsymbol y}
-
m_{\boldsymbol{\theta}}
(1-\hat{y}_0) \\
&=
\boldsymbol \theta^\top \hat{\boldsymbol y}
-
m_{\boldsymbol{\theta}}
\left(
1-\frac{1}{m_{\boldsymbol{\theta}}}
\right) \\
&=
\boldsymbol \theta^\top \hat{\boldsymbol y}
-
m_{\boldsymbol{\theta}}
+1.
\end{aligned}
\]
Therefore,
\[
\begin{aligned}
\Omega^*(\boldsymbol \theta)
&=
\boldsymbol \theta^\top \hat{\boldsymbol y}
-
\Omega(\hat{\boldsymbol y}) \\
&=
\boldsymbol \theta^\top \hat{\boldsymbol y}
-
\left(
\boldsymbol \theta^\top \hat{\boldsymbol y}
-
m_{\boldsymbol{\theta}}
+1
\right) \\
&=
m_{\boldsymbol{\theta}}-1.
\end{aligned}
\]
\end{proof}

\begin{proof}[Proof of Proposition~\ref{prop:ocnn-fy-loss-gradient}]
By definition, the Fenchel--Young loss is
\[
\mathcal L^{\mathrm{FY}}_{\Omega}
(\boldsymbol\theta,\bar{\boldsymbol y})
=
\Omega^*(\boldsymbol\theta)
+
\Omega(\bar{\boldsymbol y})
-
\boldsymbol\theta^\top \bar{\boldsymbol y}.
\]

From Proposition~\ref{prop:ocnn-conjugate},
\[
\Omega^*(\boldsymbol\theta)
=
m_{\boldsymbol\theta}-1.
\]
Hence,
\[
\mathcal L^{\mathrm{FY}}_{\Omega}
(\boldsymbol\theta,\bar{\boldsymbol y})
=
(m_{\boldsymbol\theta}-1)
+
\Omega(\bar{\boldsymbol y})
-
\boldsymbol\theta^\top \bar{\boldsymbol y}.
\]

For a no-purchase target, $\bar{\boldsymbol y}=\boldsymbol e_0$.
Since
\[
\Omega(\boldsymbol e_0)=0
\qquad\text{and}\qquad
\boldsymbol\theta^\top \boldsymbol e_0=0,
\]
we obtain
\[
\mathcal L^{\mathrm{FY}}_{\Omega}
(\boldsymbol\theta,\boldsymbol e_0)
=
m_{\boldsymbol\theta}-1.
\]

Finally, since $\Omega$ is of Legendre type,
$\Omega^*$ is differentiable and
\[
\nabla \Omega^*(\boldsymbol\theta)
=
\hat{\boldsymbol y}_{\Omega}(\boldsymbol\theta),
\]
where $\hat{\boldsymbol y}_{\Omega}(\boldsymbol\theta)$ is the choice
probability vector induced by the ODFL optimization layer. Therefore,
\[
\nabla_{\boldsymbol\theta}
\mathcal L^{\mathrm{FY}}_{\Omega}
(\boldsymbol\theta,\bar{\boldsymbol y})
=
\hat{\boldsymbol y}_{\Omega}(\boldsymbol\theta)
-
\bar{\boldsymbol y}.
\]
\end{proof}

\subsection{Proof of Proposition \ref{prop:perfect_information_reduction}}
\label{appendix:anticipative_oracle_auxiliary}

\begin{proof}
First, we show that, for a fixed realization $\boldsymbol{\eta}$ of the Gumbel utility shocks, Problem~\eqref{eq:reduced_anticipative} is equivalent to the perfect-information dynamic pricing problem. In particular, if $\boldsymbol{y}^*$ is an optimal solution to Problem~\eqref{eq:reduced_anticipative}, then $\boldsymbol{y}^*$ can be extended to an optimal solution $(\boldsymbol{y}^*,\boldsymbol{r}^*)$ of the following perfect-information pricing problem:
\begin{subequations}
\label{eq:generic_anticipative_problem}
\begin{align}
\max_{\boldsymbol y, \boldsymbol r}\quad
    & \sum_{t=1}^{T}
      \sum_{i\in\Products}
      r_{i,t}y_{i,t}
      \label{opt:generic_obj}
      \\
    \text{s.t.}\quad
    & I_{i,1}=c_i,
      && \forall i\in\Products,
      \\
    & I_{i,t+1}=I_{i,t}-y_{i,t},
      && \forall i\in\Products,\ t=1,\ldots,T,
      \\
    & \boldsymbol{y}_t
      \in
      \argmax_{\boldsymbol{y}\in \mathcal{C}(\boldsymbol{I}_t)}
      \boldsymbol{U}_t^\top \boldsymbol{y},
      && t=1,\ldots,T,
      \label{cons:choice_consistency}
      \\
    & I_{i,t}\geq 0,
      && \forall i\in\Products,\ t=1,\ldots,T,
      \\
    & y_{i,t}\in\{0,1\},
      && \forall i\in \Products\cup\{0\},\ t=1,\ldots,T,
      \\
    & r_{i,t}\geq 0,
      && \forall i\in\Products,\ t=1,\ldots,T,
\end{align}
\end{subequations}
Here, $\boldsymbol{U}_t=(\eta_{0,t},U_{1,t}(r_{1,t}),\ldots,U_{\nb,t}(r_{\nb,t}))$. This equivalence follows from the following two claims.

\begin{enumerate}
\item \textit{$\boldsymbol{y}^*$ can be extended to a feasible solution $(\boldsymbol{y}^*,\boldsymbol{r}^*)$ of the perfect-information pricing problem with revenue $$\sum_{t=1}^T\sum_{i\in\Products}\widetilde r_{i,t}y^*_{i,t}.$$}

An integral optimal solution exists; let $\boldsymbol{y}^*$ denote one such solution, that is, $y^*_{i,t}\in\{0,1\}, \forall i \in \mathcal{A}(\boldsymbol{I}_t)$. Let
$$
r^*_{i,t}
=
\begin{cases}
\widetilde r_{i,t},
& \text{if } y^*_{i,t}=1,\\[0.2cm]
M_{\boldsymbol{\eta}},
& \text{otherwise}.
\end{cases}
$$
Since the realization $\boldsymbol{\eta}$ is fixed and the number of products and time periods is finite, we can choose $M_{\boldsymbol{\eta}}$ sufficiently large such that if $y^*_{i,t}=1$ for some $i\in\Products$, then
$$
i\in
\arg\max_{j\in\{0\}\cup\mathcal A(\boldsymbol I_t)}
U_{j,t}(r^*_{j,t}).
$$
Similarly, if $y^*_{0,t}=1$, then we choose a sufficiently large $M_{\boldsymbol{\eta}}$ for $i \in A(\boldsymbol I_t)$ such that the no-purchase option has the maximum utility. By convention, we have $r^*_{0,t}=0$.

Therefore, $(\boldsymbol{y}^*,\boldsymbol{r}^*)$ is feasible for the perfect-information pricing problem and achieves revenue
$$
\sum_{t=1}^T\sum_{i\in\Products}
\widetilde r_{i,t}y^*_{i,t}.
$$

\item \textit{Any feasible solution $(\boldsymbol{y},\boldsymbol{r})$ of the perfect-information pricing problem has revenue no greater than $$\sum_{t=1}^T\sum_{i\in\Products}\widetilde r_{i,t}y^*_{i,t}.$$}

If customer $t$ purchases product $i$, i.e., $y_{i,t}=1$, choice consistency implies
$$
a_i-\beta r_{i,t}+\eta_{i,t}\geq\eta_{0,t},
$$
and therefore
$$
r_{i,t}
\leq
\frac{a_i+\eta_{i,t}-\eta_{0,t}}{\beta}
=
\widetilde r_{i,t}.
$$
It follows that
$$
\sum_{t=1}^T\sum_{i\in\Products}
r_{i,t}y_{i,t}
\leq
\sum_{t=1}^T\sum_{i\in\Products}
\widetilde r_{i,t}y_{i,t}.
$$
Moreover, $\boldsymbol{y}$ is feasible for Problem~\eqref{eq:reduced_anticipative}, since it satisfies the same inventory and choice-feasibility constraints. Because $\boldsymbol{y}^*$ is an optimal solution to Problem~\eqref{eq:reduced_anticipative},
$$
\sum_{t=1}^T\sum_{i\in\Products}
\widetilde r_{i,t}y_{i,t}
\leq
\sum_{t=1}^T\sum_{i\in\Products}
\widetilde r_{i,t}y^*_{i,t}.
$$
Combining the two inequalities yields
$$
\sum_{t=1}^T\sum_{i\in\Products}
r_{i,t}y_{i,t}
\leq
\sum_{t=1}^T\sum_{i\in\Products}
\widetilde r_{i,t}y^*_{i,t}.
$$
\end{enumerate}

The first claim shows that the optimal value of the perfect-information pricing problem is at least the optimal value of Problem~\eqref{eq:reduced_anticipative}, while the second claim establishes the reverse inequality. Hence, $(\boldsymbol{y}^*,\boldsymbol{r}^*)$ is an optimal solution to the perfect-information pricing problem.
\end{proof}

\newpage
\section{Instance Parameters} \label{app:instance_parameters}
We use small instances from \cite{dong2009dynamic} in our experiments. These instances share time-independent parameters $\boldsymbol{a}$ and $\beta$. We set $\beta = 1$ and $\boldsymbol{a} = (11.75, 9, 6.25)$, as in \cite{dong2009dynamic}. To generate large-scale time-dependent MNL instances, we calibrate them from the previously introduced \MMNL model (Equation \eqref{eq:mixmnl_exp}) rather than specifying time-varying parameters manually from scratch. At each time step, we choose the MNL parameters to approximate the \MMNL choice probabilities over a relevant price region. Since these probabilities determine expected demand, revenue, and resource consumption, we measure the approximation error using the Kullback--Leibler (KL) divergence \citep{kullback1997information}.

\subsection{KL Projection for Time-Dependent MNL Parameters}
\label{appendix:kl_projection}

We use KL projection for two purposes in our experimental design. First, it is used to construct the large-scale time-dependent MNL instances from the \MMNL model introduced above. In this case, the projected MNL model defines the ground-truth environment $\mathbb P$. Second, in the full-information \MMNL setting, we use the same procedure to construct the surrogate MNL model $\mathbb Q$ that approximates the true \MMNL environment $\mathbb P$.

For two probability vectors $\boldsymbol p$ and $\boldsymbol q$ over $\Products\cup\{0\}$, the KL divergence is defined as
\[
\KL(\boldsymbol p\,\|\,\boldsymbol q)
=
\sum_{i\in \Products\cup\{0\}}
p_i\log\frac{p_i}{q_i}.
\]

For each time step \(t\), let
\(p_t(\cdot\mid\boldsymbol r,\boldsymbol I_t)\)
denote the choice probabilities induced by the reference \MMNL model under price vector \(\boldsymbol r\) and inventory vector \(\boldsymbol I_t\), and let
\(q_t(\cdot\mid\boldsymbol r,\boldsymbol I_t;\boldsymbol a,\beta)\)
denote the corresponding probabilities under an MNL model with parameters
\((\boldsymbol a,\beta)\).

Given a reference distribution \(f_t\) over price vectors, the projected MNL parameters
\((\tilde{\boldsymbol a}_t,\tilde\beta_t)\)
are obtained by minimizing the expected KL divergence:
\begin{equation}
\label{eq:expected_kl_projection}
(\tilde{\boldsymbol a}_t,\tilde\beta_t)
\in
\arg\min_{\boldsymbol a,\beta}
\mathbb E_{\boldsymbol r\sim f_t}
\left[
\KL
\left(
p_t(\cdot\mid\boldsymbol r,\boldsymbol I_t)
\,\|\,
q_t(\cdot\mid\boldsymbol r,\boldsymbol I_t;\boldsymbol a,\beta)
\right)
\right].
\end{equation}

In practice, we approximate this expectation by sampling price vectors from \(f_t\) and minimizing the resulting empirical KL divergence using gradient descent. Repeating this procedure for each time step yields a time-dependent MNL model. Depending on the experimental setting, this projected model is used either as the ground-truth MNL environment $\mathbb P$ when constructing the large-scale MNL instances, or as the surrogate MNL model $\mathbb Q$ in the full-information \MMNL setting.

\newpage
\section{Pipeline Configuration Results} \label{app:model_selection}

We compare the performance of our architectures under different reference policies (Section \ref{app:reference_policy}), residual forms (Section \ref{app:residual_form}), and hinge-penalty settings (Section \ref{app:hinge_penality}). Each point in Figures \ref{fig:residual_reference_comparison} to \ref{fig:hinge_loss_comparison} reports the revenue gain relative to the validation-selected baseline (normalized to zero). For each model class, the displayed result corresponds to its best validation configuration.

\subsection{Reference Policy} \label{app:reference_policy}
The reference vector used in the residual framework is obtained either by averaging baseline outputs or by using the output of the validation-selected baseline.
    \begin{figure}[htbp]
    \centering
    \resizebox{\textwidth}{!}{%
      
\begin{tikzpicture}
\begin{scope}[shift={(0bp,250.8bp)}]
\begin{axis}[
  at={(50.5325bp,80.4058bp)},
  anchor=south west,
  scale only axis,
  width=468.227bp,
  height=130.922bp,
  xmin=-0.45000000000000001,
  xmax=9.4499999999999993,
  ymin=-3.2833881094209598,
  ymax=5.8126384510672979,
  enlargelimits=false,
  axis lines=box,
  axis line style={line width=0.8bp},
  tick align=outside,
  tick pos=left,
  major tick length=3.5bp,
  tick style={black,line width=0.8bp},
  scaled ticks=false,
  xtick={0,1,2,3,4,5,6,7,8,9},
  xticklabels={{6\_150\_400},{6\_210\_1600},{12\_90\_1800},{12\_180\_2000},{15\_30\_600},{15\_120\_1400},{21\_30\_800},{21\_30\_1400},{24\_30\_600},{30\_120\_1400}},
  ytick={-2,0,2,4},
  yticklabels={{-2},{0},{2},{4}},
  xticklabel style={font={\fontsize{12}{14.4}\selectfont},rotate=30,anchor=north east},
  yticklabel style={font={\fontsize{12}{14.4}\selectfont}},
  xlabel={Instance},
  ylabel={Gain (\%)},
  label style={font={\fontsize{14}{16.8}\selectfont}},
  title={MNL (full information)},
  title style={font={\fontsize{14}{16.8}\selectfont},at={(0.5,1)},anchor=south,yshift=3bp},
  ymajorgrids=true,
  xmajorgrids=false,
  grid style={gray!30,line width=0.8bp},
  axis background/.style={fill=white},
  unbounded coords=jump,
  legend columns=2,
  legend style={at={(0.00907678,0.149192)},anchor=north west,font={\fontsize{8.5}{10.2}\selectfont},draw=black!20,fill=white,fill opacity=0.8,text opacity=1,rounded corners=1bp,inner sep=3bp,column sep=8bp,/tikz/every even column/.append style={column sep=8bp}},
  legend cell align=left
]
\addplot[color={rgb,1:red,0.066666666666666666;green,0.066666666666666666;blue,0.066666666666666666},line width=2bp,mark=none,opacity=0.85,forget plot] coordinates {(-0.45000000000000001,0) (9.4499999999999993,0)};
\addplot[color={rgb,1:red,0.55686274509803924;green,0.14117647058823529;blue,0.66666666666666663},line width=2.1bp,dash pattern=on 7.77bp off 3.36bp,dash phase=0bp,mark=*,mark size=3.25bp,mark options={solid},opacity=1] coordinates {
  (0,-0.0046331110793207996)
  (1,1.9711845868181173)
  (2,3.1668074035655271)
  (3,-0.75349385952768466)
  (4,2.8922871774251644)
  (5,-0.30535515629596238)
  (6,5.3991826983178317)
  (7,-2.8699323566714936)
  (8,1.3361741481787763)
  (9,-0.99462713108938239)
};
\addlegendentry{mean baseline residual}
\addplot[color={rgb,1:red,0.84705882352941175;green,0.10588235294117647;blue,0.37647058823529411},line width=2.1bp,dash pattern=on 13.44bp off 3.36bp on 2.1bp off 3.36bp,dash phase=0bp,mark=square*,mark size=3.25bp,mark options={solid},opacity=1] coordinates {
  (0,0.71790824486209359)
  (1,2.8315992799356962)
  (2,4.6249170377633284)
  (3,0.55464288928501226)
  (4,2.3200245803404025)
  (5,1.2898573812769629)
  (6,2.7244218280583663)
  (7,0.34791870300651168)
  (8,3.4710353850600839)
  (9,0.43874715244628087)
};
\addlegendentry{validation-best baseline}
\end{axis}
\end{scope}
\begin{scope}[shift={(542.4bp,250.8bp)}]
\begin{axis}[
  at={(56.4725bp,80.4058bp)},
  anchor=south west,
  scale only axis,
  width=462.287bp,
  height=130.922bp,
  xmin=-0.45000000000000001,
  xmax=9.4499999999999993,
  ymin=-11.218178087038551,
  ymax=52.101671751976617,
  enlargelimits=false,
  axis lines=box,
  axis line style={line width=0.8bp},
  tick align=outside,
  tick pos=left,
  major tick length=3.5bp,
  tick style={black,line width=0.8bp},
  scaled ticks=false,
  xtick={0,1,2,3,4,5,6,7,8,9},
  xticklabels={{6\_150\_400},{6\_210\_1600},{12\_90\_1800},{12\_180\_2000},{15\_30\_600},{15\_120\_1400},{21\_30\_800},{21\_30\_1400},{24\_30\_600},{30\_120\_1400}},
  ytick={0,20,40},
  yticklabels={{0},{20},{40}},
  xticklabel style={font={\fontsize{12}{14.4}\selectfont},rotate=30,anchor=north east},
  yticklabel style={font={\fontsize{12}{14.4}\selectfont}},
  xlabel={Instance},
  ylabel={Gain (\%)},
  label style={font={\fontsize{14}{16.8}\selectfont}},
  title={\MMNL (full information)},
  title style={font={\fontsize{14}{16.8}\selectfont},at={(0.5,1)},anchor=south,yshift=3bp},
  ymajorgrids=true,
  xmajorgrids=false,
  grid style={gray!30,line width=0.8bp},
  axis background/.style={fill=white},
  unbounded coords=jump,
  legend columns=2,
  legend style={at={(0.00919341,0.967538)},anchor=north west,font={\fontsize{8.5}{10.2}\selectfont},draw=black!20,fill=white,fill opacity=0.8,text opacity=1,rounded corners=1bp,inner sep=3bp,column sep=8bp,/tikz/every even column/.append style={column sep=8bp}},
  legend cell align=left
]
\addplot[color={rgb,1:red,0.066666666666666666;green,0.066666666666666666;blue,0.066666666666666666},line width=2bp,mark=none,opacity=0.85,forget plot] coordinates {(-0.45000000000000001,0) (9.4499999999999993,0)};
\addplot[color={rgb,1:red,0.55686274509803924;green,0.14117647058823529;blue,0.66666666666666663},line width=2.1bp,dash pattern=on 7.77bp off 3.36bp,dash phase=0bp,mark=*,mark size=3.25bp,mark options={solid},opacity=1] coordinates {
  (0,23.362080619046694)
  (1,9.3357659071340215)
  (2,5.7917002447432164)
  (3,-8.3400030943560424)
  (4,18.680816144572635)
  (5,16.063273491497881)
  (6,46.026262613531287)
  (7,28.313748398003948)
  (8,49.223496759294108)
  (9,38.338788693488873)
};
\addlegendentry{mean baseline residual}
\addplot[color={rgb,1:red,0.84705882352941175;green,0.10588235294117647;blue,0.37647058823529411},line width=2.1bp,dash pattern=on 13.44bp off 3.36bp on 2.1bp off 3.36bp,dash phase=0bp,mark=square*,mark size=3.25bp,mark options={solid},opacity=1] coordinates {
  (0,33.977288929111957)
  (1,1.4947478437888966)
  (2,9.7905410933458086)
  (3,16.379393440822561)
  (4,18.069559651004411)
  (5,18.879668691024712)
  (6,35.989268717260224)
  (7,21.509088452988717)
  (8,33.445531191975576)
  (9,41.65358936670907)
};
\addlegendentry{validation-best baseline}
\end{axis}
\end{scope}
\begin{scope}[shift={(0bp,0bp)}]
\begin{axis}[
  at={(50.5325bp,80.4058bp)},
  anchor=south west,
  scale only axis,
  width=468.227bp,
  height=130.922bp,
  xmin=-0.45000000000000001,
  xmax=9.4499999999999993,
  ymin=-9.1821885230212921,
  ymax=3.3098651827357291,
  enlargelimits=false,
  axis lines=box,
  axis line style={line width=0.8bp},
  tick align=outside,
  tick pos=left,
  major tick length=3.5bp,
  tick style={black,line width=0.8bp},
  scaled ticks=false,
  xtick={0,1,2,3,4,5,6,7,8,9},
  xticklabels={{6\_150\_400},{6\_210\_1600},{12\_90\_1800},{12\_180\_2000},{15\_30\_600},{15\_120\_1400},{21\_30\_800},{21\_30\_1400},{24\_30\_600},{30\_120\_1400}},
  ytick={-7.5,-5,-2.5,0,2.5},
  yticklabels={{-7.5},{-5.0},{-2.5},{0.0},{2.5}},
  xticklabel style={font={\fontsize{12}{14.4}\selectfont},rotate=30,anchor=north east},
  yticklabel style={font={\fontsize{12}{14.4}\selectfont}},
  xlabel={Instance},
  ylabel={Gain (\%)},
  label style={font={\fontsize{14}{16.8}\selectfont}},
  title={MNL (data-driven)},
  title style={font={\fontsize{14}{16.8}\selectfont},at={(0.5,1)},anchor=south,yshift=3bp},
  ymajorgrids=true,
  xmajorgrids=false,
  grid style={gray!30,line width=0.8bp},
  axis background/.style={fill=white},
  unbounded coords=jump,
  legend columns=2,
  legend style={at={(0.990923,0.967538)},anchor=north east,font={\fontsize{8.5}{10.2}\selectfont},draw=black!20,fill=white,fill opacity=0.8,text opacity=1,rounded corners=1bp,inner sep=3bp,column sep=8bp,/tikz/every even column/.append style={column sep=8bp}},
  legend cell align=left
]
\addplot[color={rgb,1:red,0.066666666666666666;green,0.066666666666666666;blue,0.066666666666666666},line width=2bp,mark=none,opacity=0.85,forget plot] coordinates {(-0.45000000000000001,0) (9.4499999999999993,0)};
\addplot[color={rgb,1:red,0.55686274509803924;green,0.14117647058823529;blue,0.66666666666666663},line width=2.1bp,dash pattern=on 7.77bp off 3.36bp,dash phase=0bp,mark=*,mark size=3.25bp,mark options={solid},opacity=1] coordinates {
  (0,0.22559928997237891)
  (1,2.7420445597467733)
  (2,-1.7675803717482494)
  (3,-0.93701081262035724)
  (4,0.16867932312626729)
  (5,-0.66750340520118012)
  (6,0.19471124211763849)
  (7,-8.6143679000323363)
  (8,0.33702848852000961)
  (9,-0.9663687460418392)
};
\addlegendentry{mean baseline residual}
\addplot[color={rgb,1:red,0.84705882352941175;green,0.10588235294117647;blue,0.37647058823529411},line width=2.1bp,dash pattern=on 13.44bp off 3.36bp on 2.1bp off 3.36bp,dash phase=0bp,mark=square*,mark size=3.25bp,mark options={solid},opacity=1] coordinates {
  (0,0.8086321420534005)
  (1,2.7087057642945549)
  (2,0.82566651777045108)
  (3,0.68880742916839188)
  (4,1.1003942395513839)
  (5,0.87151917193581674)
  (6,0.77340993682643622)
  (7,1.2230984777418308)
  (8,0.68760429797854439)
  (9,0.36242827922492837)
};
\addlegendentry{validation-best baseline}
\end{axis}
\end{scope}
\begin{scope}[shift={(542.4bp,0bp)}]
\begin{axis}[
  at={(50.4425bp,80.4058bp)},
  anchor=south west,
  scale only axis,
  width=468.317bp,
  height=130.922bp,
  xmin=-0.45000000000000001,
  xmax=9.4499999999999993,
  ymin=-6.191938073681003,
  ymax=15.855096726165575,
  enlargelimits=false,
  axis lines=box,
  axis line style={line width=0.8bp},
  tick align=outside,
  tick pos=left,
  major tick length=3.5bp,
  tick style={black,line width=0.8bp},
  scaled ticks=false,
  xtick={0,1,2,3,4,5,6,7,8,9},
  xticklabels={{6\_150\_400},{6\_210\_1600},{12\_90\_1800},{12\_180\_2000},{15\_30\_600},{15\_120\_1400},{21\_30\_800},{21\_30\_1400},{24\_30\_600},{30\_120\_1400}},
  ytick={-5,0,5,10,15},
  yticklabels={{-5},{0},{5},{10},{15}},
  xticklabel style={font={\fontsize{12}{14.4}\selectfont},rotate=30,anchor=north east},
  yticklabel style={font={\fontsize{12}{14.4}\selectfont}},
  xlabel={Instance},
  ylabel={Gain (\%)},
  label style={font={\fontsize{14}{16.8}\selectfont}},
  title={\MMNL (data-driven)},
  title style={font={\fontsize{14}{16.8}\selectfont},at={(0.5,1)},anchor=south,yshift=3bp},
  ymajorgrids=true,
  xmajorgrids=false,
  grid style={gray!30,line width=0.8bp},
  axis background/.style={fill=white},
  unbounded coords=jump,
  legend columns=2,
  legend style={at={(0.990925,0.149192)},anchor=north east,font={\fontsize{8.5}{10.2}\selectfont},draw=black!20,fill=white,fill opacity=0.8,text opacity=1,rounded corners=1bp,inner sep=3bp,column sep=8bp,/tikz/every even column/.append style={column sep=8bp}},
  legend cell align=left
]
\addplot[color={rgb,1:red,0.066666666666666666;green,0.066666666666666666;blue,0.066666666666666666},line width=2bp,mark=none,opacity=0.85,forget plot] coordinates {(-0.45000000000000001,0) (9.4499999999999993,0)};
\addplot[color={rgb,1:red,0.55686274509803924;green,0.14117647058823529;blue,0.66666666666666663},line width=2.1bp,dash pattern=on 7.77bp off 3.36bp,dash phase=0bp,mark=*,mark size=3.25bp,mark options={solid},opacity=1] coordinates {
  (0,5.0258990170011044)
  (1,-5.189800128233431)
  (2,-2.3221340942401563)
  (3,9.4749741701195127)
  (4,-0.73165544203966093)
  (5,12.574022311785622)
  (6,6.1419514195106144)
  (7,9.5247805566123311)
  (8,7.7903829360329846)
  (9,6.7755930567049063)
};
\addlegendentry{mean baseline residual}
\addplot[color={rgb,1:red,0.84705882352941175;green,0.10588235294117647;blue,0.37647058823529411},line width=2.1bp,dash pattern=on 13.44bp off 3.36bp on 2.1bp off 3.36bp,dash phase=0bp,mark=square*,mark size=3.25bp,mark options={solid},opacity=1] coordinates {
  (0,0.41061749649488438)
  (1,2.5460890862890806)
  (2,-0.98590612194628002)
  (3,14.852958780718003)
  (4,-0.92286197268828796)
  (5,-0.49681740641006711)
  (6,6.3846392231433677)
  (7,11.280265041297694)
  (8,4.6810075150820758)
  (9,0.3648967498720781)
};
\addlegendentry{validation-best baseline}
\end{axis}
\end{scope}
\pgfresetboundingbox
\path[use as bounding box] (0,0) rectangle (1075.2bp,492bp);
\end{tikzpicture}

    }
\caption{\centering Validation performance with different references in the residual parameterization.}
\label{fig:residual_reference_comparison}
\end{figure}
Figure~\ref{fig:residual_reference_comparison} shows that neither reference policy dominates across all instances. Using the validation-best baseline as the reference is generally more robust, while the mean-baseline-output reference remains competitive in some \MMNL instances. We therefore treat the reference policy as a configuration choice and select it separately based on validation revenue.

\subsection{Residual Form}\label{app:residual_form}
We compare the additive and multiplicative residual specifications introduced in Section~\ref{subsec:statistical predictor}. As shown in Figure~\ref{fig:residual_form_comparison}, neither residual form dominates across all settings. The multiplicative specification performs particularly well in the \MMNL environments, especially under full information, whereas the additive specification is generally more robust in the MNL environments. We therefore treat the residual form as a configuration choice and select it separately  based on validation revenue.
    \begin{figure}[htbp]
    \centering
    \resizebox{\textwidth}{!}{%
        
\begin{tikzpicture}
\begin{scope}[shift={(0bp,250.8bp)}]
\begin{axis}[
  at={(50.5325bp,80.4058bp)},
  anchor=south west,
  scale only axis,
  width=468.227bp,
  height=130.922bp,
  xmin=-0.45000000000000001,
  xmax=9.4499999999999993,
  ymin=-3.0152162906698816,
  ymax=5.8801147225074937,
  enlargelimits=false,
  axis lines=box,
  axis line style={line width=0.8bp},
  tick align=outside,
  tick pos=left,
  major tick length=3.5bp,
  tick style={black,line width=0.8bp},
  scaled ticks=false,
  xtick={0,1,2,3,4,5,6,7,8,9},
  xticklabels={{6\_150\_400},{6\_210\_1600},{12\_90\_1800},{12\_180\_2000},{15\_30\_600},{15\_120\_1400},{21\_30\_800},{21\_30\_1400},{24\_30\_600},{30\_120\_1400}},
  ytick={-2,0,2,4},
  yticklabels={{-2},{0},{2},{4}},
  xticklabel style={font={\fontsize{12}{14.4}\selectfont},rotate=30,anchor=north east},
  yticklabel style={font={\fontsize{12}{14.4}\selectfont}},
  xlabel={Instance},
  ylabel={Gain (\%)},
  label style={font={\fontsize{14}{16.8}\selectfont}},
  title={MNL (full information)},
  title style={font={\fontsize{14}{16.8}\selectfont},at={(0.5,1)},anchor=south,yshift=3bp},
  ymajorgrids=true,
  xmajorgrids=false,
  grid style={gray!30,line width=0.8bp},
  axis background/.style={fill=white},
  unbounded coords=jump,
  legend columns=2,
  legend style={at={(0.990923,0.967538)},anchor=north east,font={\fontsize{8.5}{10.2}\selectfont},draw=black!20,fill=white,fill opacity=0.8,text opacity=1,rounded corners=1bp,inner sep=3bp,column sep=8bp,/tikz/every even column/.append style={column sep=8bp}},
  legend cell align=left
]
\addplot[color={rgb,1:red,0.066666666666666666;green,0.066666666666666666;blue,0.066666666666666666},line width=2bp,mark=none,opacity=0.85,forget plot] coordinates {(-0.45000000000000001,0) (9.4499999999999993,0)};
\addplot[color={rgb,1:red,0.77647058823529413;green,0.15686274509803921;blue,0.15686274509803921},line width=2.1bp,dash pattern=on 7.77bp off 3.36bp,dash phase=0bp,mark=*,mark size=3.25bp,mark options={solid},opacity=1] coordinates {
  (0,0.71790824486209359)
  (1,3.6518691317687337)
  (2,3.5203281158797561)
  (3,1.237413268825595)
  (4,2.3782753121877582)
  (5,1.7852252423910258)
  (6,4.8534842241232923)
  (7,0.34791870300651168)
  (8,3.4710353850600839)
  (9,0.43833509813855748)
};
\addlegendentry{additive}
\addplot[color={rgb,1:red,0;green,0.51372549019607838;blue,0.5607843137254902},line width=2.1bp,dash pattern=on 13.44bp off 3.36bp on 2.1bp off 3.36bp,dash phase=0bp,mark=square*,mark size=3.25bp,mark options={solid},opacity=1] coordinates {
  (0,0.71790824486209359)
  (1,2.4387422092013518)
  (2,5.250959629562991)
  (3,0.55464288928501226)
  (4,2.8922871774251644)
  (5,1.0150500338293122)
  (6,5.475781494635795)
  (7,-2.6108830627981829)
  (8,2.5904472301126611)
  (9,0.43874715244628087)
};
\addlegendentry{multiplicative}
\end{axis}
\end{scope}
\begin{scope}[shift={(542.4bp,250.8bp)}]
\begin{axis}[
  at={(49.7225bp,80.4058bp)},
  anchor=south west,
  scale only axis,
  width=469.037bp,
  height=130.922bp,
  xmin=-0.45000000000000001,
  xmax=9.4499999999999993,
  ymin=-2.4611748379647054,
  ymax=51.684671597258813,
  enlargelimits=false,
  axis lines=box,
  axis line style={line width=0.8bp},
  tick align=outside,
  tick pos=left,
  major tick length=3.5bp,
  tick style={black,line width=0.8bp},
  scaled ticks=false,
  xtick={0,1,2,3,4,5,6,7,8,9},
  xticklabels={{6\_150\_400},{6\_210\_1600},{12\_90\_1800},{12\_180\_2000},{15\_30\_600},{15\_120\_1400},{21\_30\_800},{21\_30\_1400},{24\_30\_600},{30\_120\_1400}},
  ytick={0,20,40},
  yticklabels={{0},{20},{40}},
  xticklabel style={font={\fontsize{12}{14.4}\selectfont},rotate=30,anchor=north east},
  yticklabel style={font={\fontsize{12}{14.4}\selectfont}},
  xlabel={Instance},
  ylabel={Gain (\%)},
  label style={font={\fontsize{14}{16.8}\selectfont}},
  title={\MMNL (full information)},
  title style={font={\fontsize{14}{16.8}\selectfont},at={(0.5,1)},anchor=south,yshift=3bp},
  ymajorgrids=true,
  xmajorgrids=false,
  grid style={gray!30,line width=0.8bp},
  axis background/.style={fill=white},
  unbounded coords=jump,
  legend columns=2,
  legend style={at={(0.00906111,0.967538)},anchor=north west,font={\fontsize{8.5}{10.2}\selectfont},draw=black!20,fill=white,fill opacity=0.8,text opacity=1,rounded corners=1bp,inner sep=3bp,column sep=8bp,/tikz/every even column/.append style={column sep=8bp}},
  legend cell align=left
]
\addplot[color={rgb,1:red,0.066666666666666666;green,0.066666666666666666;blue,0.066666666666666666},line width=2bp,mark=none,opacity=0.85,forget plot] coordinates {(-0.45000000000000001,0) (9.4499999999999993,0)};
\addplot[color={rgb,1:red,0.77647058823529413;green,0.15686274509803921;blue,0.15686274509803921},line width=2.1bp,dash pattern=on 7.77bp off 3.36bp,dash phase=0bp,mark=*,mark size=3.25bp,mark options={solid},opacity=1] coordinates {
  (0,33.977288929111957)
  (1,4.4192999656204028)
  (2,1.157427679749309)
  (3,16.379393440822561)
  (4,2.1234415764505421)
  (5,13.56544577846531)
  (6,12.045025929711882)
  (7,9.3737411465003397)
  (8,11.119344639472979)
  (9,21.8243591614353)
};
\addlegendentry{additive}
\addplot[color={rgb,1:red,0;green,0.51372549019607838;blue,0.5607843137254902},line width=2.1bp,dash pattern=on 13.44bp off 3.36bp on 2.1bp off 3.36bp,dash phase=0bp,mark=square*,mark size=3.25bp,mark options={solid},opacity=1] coordinates {
  (0,28.139386986672815)
  (1,9.3357659071340215)
  (2,9.7905410933458086)
  (3,6.7906163781687132)
  (4,18.680816144572635)
  (5,18.879668691024712)
  (6,46.026262613531287)
  (7,28.313748398003948)
  (8,49.223496759294108)
  (9,41.65358936670907)
};
\addlegendentry{multiplicative}
\end{axis}
\end{scope}
\begin{scope}[shift={(0bp,0bp)}]
\begin{axis}[
  at={(65.3825bp,80.4058bp)},
  anchor=south west,
  scale only axis,
  width=453.377bp,
  height=130.922bp,
  xmin=-0.45000000000000001,
  xmax=9.4499999999999993,
  ymin=-10.986086987604489,
  ymax=4.9238763925068945,
  enlargelimits=false,
  axis lines=box,
  axis line style={line width=0.8bp},
  tick align=outside,
  tick pos=left,
  major tick length=3.5bp,
  tick style={black,line width=0.8bp},
  scaled ticks=false,
  xtick={0,1,2,3,4,5,6,7,8,9},
  xticklabels={{6\_150\_400},{6\_210\_1600},{12\_90\_1800},{12\_180\_2000},{15\_30\_600},{15\_120\_1400},{21\_30\_800},{21\_30\_1400},{24\_30\_600},{30\_120\_1400}},
  ytick={-10,-5,0},
  yticklabels={{-10},{-5},{0}},
  xticklabel style={font={\fontsize{12}{14.4}\selectfont},rotate=30,anchor=north east},
  yticklabel style={font={\fontsize{12}{14.4}\selectfont}},
  xlabel={Instance},
  ylabel={Gain (\%)},
  label style={font={\fontsize{14}{16.8}\selectfont}},
  title={MNL (data-driven)},
  title style={font={\fontsize{14}{16.8}\selectfont},at={(0.5,1)},anchor=south,yshift=3bp},
  ymajorgrids=true,
  xmajorgrids=false,
  grid style={gray!30,line width=0.8bp},
  axis background/.style={fill=white},
  unbounded coords=jump,
  legend columns=2,
  legend style={at={(0.990626,0.967538)},anchor=north east,font={\fontsize{8.5}{10.2}\selectfont},draw=black!20,fill=white,fill opacity=0.8,text opacity=1,rounded corners=1bp,inner sep=3bp,column sep=8bp,/tikz/every even column/.append style={column sep=8bp}},
  legend cell align=left
]
\addplot[color={rgb,1:red,0.066666666666666666;green,0.066666666666666666;blue,0.066666666666666666},line width=2bp,mark=none,opacity=0.85,forget plot] coordinates {(-0.45000000000000001,0) (9.4499999999999993,0)};
\addplot[color={rgb,1:red,0.77647058823529413;green,0.15686274509803921;blue,0.15686274509803921},line width=2.1bp,dash pattern=on 7.77bp off 3.36bp,dash phase=0bp,mark=*,mark size=3.25bp,mark options={solid},opacity=1] coordinates {
  (0,0.8086321420534005)
  (1,4.2006962388654676)
  (2,0.82566651777045108)
  (3,0.52099712750792215)
  (4,1.0165545037594601)
  (5,0.87151917193581674)
  (6,0.63097669351282815)
  (7,1.2230984777418308)
  (8,0.67872866696619749)
  (9,0.36212488201717341)
};
\addlegendentry{additive}
\addplot[color={rgb,1:red,0;green,0.51372549019607838;blue,0.5607843137254902},line width=2.1bp,dash pattern=on 13.44bp off 3.36bp on 2.1bp off 3.36bp,dash phase=0bp,mark=square*,mark size=3.25bp,mark options={solid},opacity=1] coordinates {
  (0,0.83037647710698692)
  (1,3.3513948346139397)
  (2,0.81069129490530045)
  (3,0.68880742916839188)
  (4,1.1003942395513839)
  (5,0.77044744975216684)
  (6,0.77340993682643622)
  (7,-10.262906833963063)
  (8,0.68760429797854439)
  (9,0.36242827922492837)
};
\addlegendentry{multiplicative}
\end{axis}
\end{scope}
\begin{scope}[shift={(542.4bp,0bp)}]
\begin{axis}[
  at={(49.7225bp,80.4058bp)},
  anchor=south west,
  scale only axis,
  width=469.037bp,
  height=130.922bp,
  xmin=-0.45000000000000001,
  xmax=9.4499999999999993,
  ymin=-2.8067556675196963,
  ymax=15.693897563967418,
  enlargelimits=false,
  axis lines=box,
  axis line style={line width=0.8bp},
  tick align=outside,
  tick pos=left,
  major tick length=3.5bp,
  tick style={black,line width=0.8bp},
  scaled ticks=false,
  xtick={0,1,2,3,4,5,6,7,8,9},
  xticklabels={{6\_150\_400},{6\_210\_1600},{12\_90\_1800},{12\_180\_2000},{15\_30\_600},{15\_120\_1400},{21\_30\_800},{21\_30\_1400},{24\_30\_600},{30\_120\_1400}},
  ytick={0,5,10,15},
  yticklabels={{0},{5},{10},{15}},
  xticklabel style={font={\fontsize{12}{14.4}\selectfont},rotate=30,anchor=north east},
  yticklabel style={font={\fontsize{12}{14.4}\selectfont}},
  xlabel={Instance},
  ylabel={Gain (\%)},
  label style={font={\fontsize{14}{16.8}\selectfont}},
  title={\MMNL (data-driven)},
  title style={font={\fontsize{14}{16.8}\selectfont},at={(0.5,1)},anchor=south,yshift=3bp},
  ymajorgrids=true,
  xmajorgrids=false,
  grid style={gray!30,line width=0.8bp},
  axis background/.style={fill=white},
  unbounded coords=jump,
  legend columns=2,
  legend style={at={(0.990939,0.967538)},anchor=north east,font={\fontsize{8.5}{10.2}\selectfont},draw=black!20,fill=white,fill opacity=0.8,text opacity=1,rounded corners=1bp,inner sep=3bp,column sep=8bp,/tikz/every even column/.append style={column sep=8bp}},
  legend cell align=left
]
\addplot[color={rgb,1:red,0.066666666666666666;green,0.066666666666666666;blue,0.066666666666666666},line width=2bp,mark=none,opacity=0.85,forget plot] coordinates {(-0.45000000000000001,0) (9.4499999999999993,0)};
\addplot[color={rgb,1:red,0.77647058823529413;green,0.15686274509803921;blue,0.15686274509803921},line width=2.1bp,dash pattern=on 7.77bp off 3.36bp,dash phase=0bp,mark=*,mark size=3.25bp,mark options={solid},opacity=1] coordinates {
  (0,10.816237355800071)
  (1,2.5460890862890806)
  (2,-0.98590612194628002)
  (3,9.4749741701195127)
  (4,-0.41108074367543651)
  (5,12.574022311785622)
  (6,6.3846392231433677)
  (7,11.280265041297694)
  (8,7.7903829360329846)
  (9,6.7755930567049063)
};
\addlegendentry{additive}
\addplot[color={rgb,1:red,0;green,0.51372549019607838;blue,0.5607843137254902},line width=2.1bp,dash pattern=on 13.44bp off 3.36bp on 2.1bp off 3.36bp,dash phase=0bp,mark=square*,mark size=3.25bp,mark options={solid},opacity=1] coordinates {
  (0,6.4206748759871566)
  (1,1.2161661069109091)
  (2,-1.9658168842702819)
  (3,14.852958780718003)
  (4,-1.0577720697705475)
  (5,11.733216505790072)
  (6,6.1447511988503676)
  (7,10.931047513811118)
  (8,7.5453159260418312)
  (9,5.9965977591765194)
};
\addlegendentry{multiplicative}
\end{axis}
\end{scope}
\pgfresetboundingbox
\path[use as bounding box] (0,0) rectangle (1075.2bp,492bp);
\end{tikzpicture}

    }
\caption{\centering Validation performance of additive and multiplicative residual specifications.}
\label{fig:residual_form_comparison}
\end{figure}

\subsection{Chosen-Item Squared Hinge}\label{app:hinge_penality}
For \PNN, we observe that adding a chosen-item squared hinge penalty to the training objective improves performance on two instances. Therefore, we include it as an optional loss component. Specifically, we consider
\[
\mathcal{L}'
=
\mathcal{L}
+
\lambda_{\mathrm{hinge}} \times
\max\left\{
0.5-\left(r-r_{\mathrm{reference}}\right),\,0
\right\}^{2},
\qquad
\lambda_{\mathrm{hinge}}\in\{0,1\}.
\]
which encourages the learned price of the purchased item to exceed its reference price by a margin of at least 0.5. Here, $\lambda_{\mathrm{hinge}}$ controls whether the chosen-item squared hinge penalty is included. The penalty is applied only to samples that contain a purchased item as no-purchase samples make no contribution to this term.
Figure~\ref{fig:hinge_loss_comparison} compares training with and without the chosen-item squared hinge penalty. In the full-information setting, adding the chosen-item squared hinge penalty generally does not improve the best-performing configuration for either MNL or \MMNL instances. In contrast, under the data-driven setting, the chosen-item squared hinge penalty provides improvements for two instances. We therefore include the chosen-item squared hinge as an optional component of the \PNN training objective and select whether to use it during the validation process.

    \begin{figure}[htbp]
    \centering
    \resizebox{\textwidth}{!}{%
        
\begin{tikzpicture}
\begin{scope}[shift={(0bp,250.8bp)}]
\begin{axis}[
  at={(50.5325bp,80.4058bp)},
  anchor=south west,
  scale only axis,
  width=468.227bp,
  height=130.922bp,
  xmin=-0.45000000000000001,
  xmax=9.4499999999999993,
  ymin=-6.1491025330749851,
  ymax=6.0293474007172607,
  enlargelimits=false,
  axis lines=box,
  axis line style={line width=0.8bp},
  tick align=outside,
  tick pos=left,
  major tick length=3.5bp,
  tick style={black,line width=0.8bp},
  scaled ticks=false,
  xtick={0,1,2,3,4,5,6,7,8,9},
  xticklabels={{6\_150\_400},{6\_210\_1600},{12\_90\_1800},{12\_180\_2000},{15\_30\_600},{15\_120\_1400},{21\_30\_800},{21\_30\_1400},{24\_30\_600},{30\_120\_1400}},
  ytick={-5,-2.5,0,2.5,5},
  yticklabels={{-5.0},{-2.5},{0.0},{2.5},{5.0}},
  xticklabel style={font={\fontsize{12}{14.4}\selectfont},rotate=30,anchor=north east},
  yticklabel style={font={\fontsize{12}{14.4}\selectfont}},
  xlabel={Instance},
  ylabel={Gain (\%)},
  label style={font={\fontsize{14}{16.8}\selectfont}},
  title={MNL (full information)},
  title style={font={\fontsize{14}{16.8}\selectfont},at={(0.5,1)},anchor=south,yshift=3bp},
  ymajorgrids=true,
  xmajorgrids=false,
  grid style={gray!30,line width=0.8bp},
  axis background/.style={fill=white},
  unbounded coords=jump,
  legend columns=2,
  legend style={at={(0.00907678,0.150395)},anchor=north west,font={\fontsize{8.5}{10.2}\selectfont},draw=black!20,fill=white,fill opacity=0.8,text opacity=1,rounded corners=1bp,inner sep=3bp,column sep=8bp,/tikz/every even column/.append style={column sep=8bp}},
  legend cell align=left
]
\addplot[color={rgb,1:red,0.066666666666666666;green,0.066666666666666666;blue,0.066666666666666666},line width=2bp,mark=none,opacity=0.85,forget plot] coordinates {(-0.45000000000000001,0) (9.4499999999999993,0)};
\addplot[color={rgb,1:red,0.1803921568627451;green,0.49019607843137253;blue,0.19607843137254902},line width=2.1bp,dash pattern=on 7.77bp off 3.36bp,dash phase=0bp,mark=*,mark size=3.25bp,mark options={solid},opacity=1] coordinates {
  (0,0.71790824486209359)
  (1,3.6518691317687337)
  (2,5.250959629562991)
  (3,1.237413268825595)
  (4,2.8922871774251644)
  (5,1.7852252423910258)
  (6,5.475781494635795)
  (7,0.34791870300651168)
  (8,3.4710353850600839)
  (9,0.43874715244628087)
};
\addlegendentry{Without penalty}
\addplot[color={rgb,1:red,0.77647058823529413;green,0.15686274509803921;blue,0.15686274509803921},line width=2.1bp,dash pattern=on 13.44bp off 3.36bp on 2.1bp off 3.36bp,dash phase=0bp,mark=square*,mark size=3.25bp,mark options={solid},opacity=1] coordinates {
  (0,-0.0046331110793207996)
  (1,-1.4566917002715112)
  (2,-1.976071023516544)
  (3,-0.57601259222857193)
  (4,-1.4067920976718149)
  (5,-0.4666562255257895)
  (6,-1.2569277517647457)
  (7,-5.5955366269935194)
  (8,-0.95256596058259924)
  (9,-0.55224345761617699)
};
\addlegendentry{With penalty}
\end{axis}
\end{scope}
\begin{scope}[shift={(542.4bp,250.8bp)}]
\begin{axis}[
  at={(56.4725bp,80.4058bp)},
  anchor=south west,
  scale only axis,
  width=462.287bp,
  height=130.922bp,
  xmin=-0.45000000000000001,
  xmax=9.4499999999999993,
  ymin=-13.494951475754844,
  ymax=52.210089532391677,
  enlargelimits=false,
  axis lines=box,
  axis line style={line width=0.8bp},
  tick align=outside,
  tick pos=left,
  major tick length=3.5bp,
  tick style={black,line width=0.8bp},
  scaled ticks=false,
  xtick={0,1,2,3,4,5,6,7,8,9},
  xticklabels={{6\_150\_400},{6\_210\_1600},{12\_90\_1800},{12\_180\_2000},{15\_30\_600},{15\_120\_1400},{21\_30\_800},{21\_30\_1400},{24\_30\_600},{30\_120\_1400}},
  ytick={0,20,40},
  yticklabels={{0},{20},{40}},
  xticklabel style={font={\fontsize{12}{14.4}\selectfont},rotate=30,anchor=north east},
  yticklabel style={font={\fontsize{12}{14.4}\selectfont}},
  xlabel={Instance},
  ylabel={Gain (\%)},
  label style={font={\fontsize{14}{16.8}\selectfont}},
  title={\MMNL (full information)},
  title style={font={\fontsize{14}{16.8}\selectfont},at={(0.5,1)},anchor=south,yshift=3bp},
  ymajorgrids=true,
  xmajorgrids=false,
  grid style={gray!30,line width=0.8bp},
  axis background/.style={fill=white},
  unbounded coords=jump,
  legend columns=2,
  legend style={at={(0.00919341,0.967538)},anchor=north west,font={\fontsize{8.5}{10.2}\selectfont},draw=black!20,fill=white,fill opacity=0.8,text opacity=1,rounded corners=1bp,inner sep=3bp,column sep=8bp,/tikz/every even column/.append style={column sep=8bp}},
  legend cell align=left
]
\addplot[color={rgb,1:red,0.066666666666666666;green,0.066666666666666666;blue,0.066666666666666666},line width=2bp,mark=none,opacity=0.85,forget plot] coordinates {(-0.45000000000000001,0) (9.4499999999999993,0)};
\addplot[color={rgb,1:red,0.1803921568627451;green,0.49019607843137253;blue,0.19607843137254902},line width=2.1bp,dash pattern=on 7.77bp off 3.36bp,dash phase=0bp,mark=*,mark size=3.25bp,mark options={solid},opacity=1] coordinates {
  (0,33.977288929111957)
  (1,9.3357659071340215)
  (2,9.7905410933458086)
  (3,16.379393440822561)
  (4,18.680816144572635)
  (5,18.879668691024712)
  (6,46.026262613531287)
  (7,28.313748398003948)
  (8,49.223496759294108)
  (9,41.65358936670907)
};
\addlegendentry{Without penalty}
\addplot[color={rgb,1:red,0.77647058823529413;green,0.15686274509803921;blue,0.15686274509803921},line width=2.1bp,dash pattern=on 13.44bp off 3.36bp on 2.1bp off 3.36bp,dash phase=0bp,mark=square*,mark size=3.25bp,mark options={solid},opacity=1] coordinates {
  (0,-10.508358702657274)
  (1,-2.2518416423830083)
  (2,1.0166532171777529)
  (3,-5.389341797379215)
  (4,-1.7308732279808483)
  (5,-10.029068646055524)
  (6,2.9753949154894652)
  (7,23.873942078652394)
  (8,-8.7251659390530385)
  (9,-6.6582494318752046)
};
\addlegendentry{With penalty}
\end{axis}
\end{scope}
\begin{scope}[shift={(0bp,0bp)}]
\begin{axis}[
  at={(50.5325bp,80.4058bp)},
  anchor=south west,
  scale only axis,
  width=468.227bp,
  height=130.922bp,
  xmin=-0.45000000000000001,
  xmax=9.4499999999999993,
  ymin=-2.9246220326753969,
  ymax=4.5399971089388425,
  enlargelimits=false,
  axis lines=box,
  axis line style={line width=0.8bp},
  tick align=outside,
  tick pos=left,
  major tick length=3.5bp,
  tick style={black,line width=0.8bp},
  scaled ticks=false,
  xtick={0,1,2,3,4,5,6,7,8,9},
  xticklabels={{6\_150\_400},{6\_210\_1600},{12\_90\_1800},{12\_180\_2000},{15\_30\_600},{15\_120\_1400},{21\_30\_800},{21\_30\_1400},{24\_30\_600},{30\_120\_1400}},
  ytick={-2,0,2,4},
  yticklabels={{-2},{0},{2},{4}},
  xticklabel style={font={\fontsize{12}{14.4}\selectfont},rotate=30,anchor=north east},
  yticklabel style={font={\fontsize{12}{14.4}\selectfont}},
  xlabel={Instance},
  ylabel={Gain (\%)},
  label style={font={\fontsize{14}{16.8}\selectfont}},
  title={MNL (data-driven)},
  title style={font={\fontsize{14}{16.8}\selectfont},at={(0.5,1)},anchor=south,yshift=3bp},
  ymajorgrids=true,
  xmajorgrids=false,
  grid style={gray!30,line width=0.8bp},
  axis background/.style={fill=white},
  unbounded coords=jump,
  legend columns=2,
  legend style={at={(0.990923,0.967538)},anchor=north east,font={\fontsize{8.5}{10.2}\selectfont},draw=black!20,fill=white,fill opacity=0.8,text opacity=1,rounded corners=1bp,inner sep=3bp,column sep=8bp,/tikz/every even column/.append style={column sep=8bp}},
  legend cell align=left
]
\addplot[color={rgb,1:red,0.066666666666666666;green,0.066666666666666666;blue,0.066666666666666666},line width=2bp,mark=none,opacity=0.85,forget plot] coordinates {(-0.45000000000000001,0) (9.4499999999999993,0)};
\addplot[color={rgb,1:red,0.1803921568627451;green,0.49019607843137253;blue,0.19607843137254902},line width=2.1bp,dash pattern=on 7.77bp off 3.36bp,dash phase=0bp,mark=*,mark size=3.25bp,mark options={solid},opacity=1] coordinates {
  (0,0.83037647710698692)
  (1,4.2006962388654676)
  (2,0.82566651777045108)
  (3,0.68880742916839188)
  (4,1.1003942395513839)
  (5,0.87151917193581674)
  (6,0.77340993682643622)
  (7,-2.5853211626020225)
  (8,0.68760429797854439)
  (9,0.36242827922492837)
};
\addlegendentry{Without penalty}
\addplot[color={rgb,1:red,0.77647058823529413;green,0.15686274509803921;blue,0.15686274509803921},line width=2.1bp,dash pattern=on 13.44bp off 3.36bp on 2.1bp off 3.36bp,dash phase=0bp,mark=square*,mark size=3.25bp,mark options={solid},opacity=1] coordinates {
  (0,0.3923744904862847)
  (1,-0.95840297927996243)
  (2,-1.0065717439892834)
  (3,-0.010091053853868)
  (4,-0.41365654453012729)
  (5,-0.15526866807792919)
  (6,-0.20775133256709299)
  (7,1.2230984777418308)
  (8,-0.6011673887383554)
  (9,-0.51418003129333179)
};
\addlegendentry{With penalty}
\end{axis}
\end{scope}
\begin{scope}[shift={(542.4bp,0bp)}]
\begin{axis}[
  at={(50.4425bp,80.4058bp)},
  anchor=south west,
  scale only axis,
  width=468.317bp,
  height=130.922bp,
  xmin=-0.45000000000000001,
  xmax=9.4499999999999993,
  ymin=-5.8882564462047116,
  ymax=15.840635696285752,
  enlargelimits=false,
  axis lines=box,
  axis line style={line width=0.8bp},
  tick align=outside,
  tick pos=left,
  major tick length=3.5bp,
  tick style={black,line width=0.8bp},
  scaled ticks=false,
  xtick={0,1,2,3,4,5,6,7,8,9},
  xticklabels={{6\_150\_400},{6\_210\_1600},{12\_90\_1800},{12\_180\_2000},{15\_30\_600},{15\_120\_1400},{21\_30\_800},{21\_30\_1400},{24\_30\_600},{30\_120\_1400}},
  ytick={-5,0,5,10,15},
  yticklabels={{-5},{0},{5},{10},{15}},
  xticklabel style={font={\fontsize{12}{14.4}\selectfont},rotate=30,anchor=north east},
  yticklabel style={font={\fontsize{12}{14.4}\selectfont}},
  xlabel={Instance},
  ylabel={Gain (\%)},
  label style={font={\fontsize{14}{16.8}\selectfont}},
  title={\MMNL (data-driven)},
  title style={font={\fontsize{14}{16.8}\selectfont},at={(0.5,1)},anchor=south,yshift=3bp},
  ymajorgrids=true,
  xmajorgrids=false,
  grid style={gray!30,line width=0.8bp},
  axis background/.style={fill=white},
  unbounded coords=jump,
  legend columns=2,
  legend style={at={(0.990925,0.967538)},anchor=north east,font={\fontsize{8.5}{10.2}\selectfont},draw=black!20,fill=white,fill opacity=0.8,text opacity=1,rounded corners=1bp,inner sep=3bp,column sep=8bp,/tikz/every even column/.append style={column sep=8bp}},
  legend cell align=left
]
\addplot[color={rgb,1:red,0.066666666666666666;green,0.066666666666666666;blue,0.066666666666666666},line width=2bp,mark=none,opacity=0.85,forget plot] coordinates {(-0.45000000000000001,0) (9.4499999999999993,0)};
\addplot[color={rgb,1:red,0.1803921568627451;green,0.49019607843137253;blue,0.19607843137254902},line width=2.1bp,dash pattern=on 7.77bp off 3.36bp,dash phase=0bp,mark=*,mark size=3.25bp,mark options={solid},opacity=1] coordinates {
  (0,10.816237355800071)
  (1,2.3509493453757706)
  (2,-0.98590612194628002)
  (3,14.852958780718003)
  (4,-0.41108074367543651)
  (5,12.574022311785622)
  (6,6.3846392231433677)
  (7,9.6051459740672325)
  (8,7.7903829360329846)
  (9,6.6973755975946716)
};
\addlegendentry{Without penalty}
\addplot[color={rgb,1:red,0.77647058823529413;green,0.15686274509803921;blue,0.15686274509803921},line width=2.1bp,dash pattern=on 13.44bp off 3.36bp on 2.1bp off 3.36bp,dash phase=0bp,mark=square*,mark size=3.25bp,mark options={solid},opacity=1] coordinates {
  (0,-4.9005795306369633)
  (1,2.5460890862890806)
  (2,-4.1771564394852696)
  (3,9.4749741701195127)
  (4,-0.99222081351184765)
  (5,8.9660601779860318)
  (6,3.4663603118481019)
  (7,11.280265041297694)
  (8,7.6566741528120579)
  (9,6.7755930567049063)
};
\addlegendentry{With penalty}
\end{axis}
\end{scope}
\pgfresetboundingbox
\path[use as bounding box] (0,0) rectangle (1075.2bp,492bp);
\end{tikzpicture}

    }
\caption{\centering Validation performance of \PNN with and without chosen-item squared hinge penalty.}
\label{fig:hinge_loss_comparison}
\end{figure}

\newpage
\section{Detailed Description of the DP-Based Benchmarks and Features}
\label{app:benchmark_policies}
\subsection{Independent-Itinerary Pricing}\label{app:Indenpenden_itinerary}
For each itinerary $i$, the purchase probability follows the two-stage binary logit model, which is defined as follows:
\begin{equation}
    \mathbb{P}_{\mathrm{Indep}}(i|\boldsymbol{r}_t)
    =
    \alpha_i
    \frac{
        \exp\!\left( a_{i}-\beta r_{i,t}\right)
    }{
        1+\exp\!\left(a_{i}-\beta r_{i,t}\right)
    },
    \label{eq:itinerary_binary_logit}
\end{equation}
where $\alpha_i$ is the probability that itinerary $i$ enters the customer's consideration set, and $a_{i}$ and $\beta$ are the corresponding quality index and price sensitivity. $\mathbb{P}_{\mathrm{Indep}}$ is obtained by minimizing the KL divergence from the known true environment $\bbP$ in the full-information setting, and by maximizing the likelihood of the historical data in the data-driven setting. By allowing $a_i$ and $\beta$ to be either time-varying or time-invariant, we obtain two variants of $\mathbb{P}_{\mathrm{Indep}}$: a time-dependent version and a time-independent version.

To obtain the pricing decisions, we solve an independent dynamic program for each itinerary. The obtained single-itinerary opportunity costs are then used to define other baselines.

\subsection{Joint Pricing with Product-Specific Opportunity Costs}\label{app:JoPri}
The \textit{JoPri} policy inserts the single-itinerary opportunity costs
$\bar o_{i,t}(\boldsymbol I_t)$ directly into the MNL pricing structure of
\citet{dong2009dynamic}:
\begin{equation}
    r_i^{\mathrm{raw}}(\state)
    =
    \bar o_{i,t}(\boldsymbol I_t)
    +
    \frac{m^{\mathrm{raw}}(\state)}{\beta},
    \qquad i\in \mathcal{A} (\boldsymbol I_t),
    \label{eq:joint_raw_policy}
\end{equation}
where $m^{\mathrm{raw}}(\state)$ is the corresponding common MNL markup. We use the same MNL surrogate model \(\mathbb Q\) as in our architectures, calibrated as described in Section \ref{sec:settings_instances}. Allowing its parameters to vary over time or holding them fixed yields time-dependent and time-independent variants.

\subsection{Unified Pricing with Aggregated Opportunity Costs}
\label{app:JoComPri}

The \textit{JoComPri} policy adapts the unified dynamic pricing rule of \citet{dong2009dynamic}.
It first aggregates the single-itinerary opportunity costs into an attractiveness-weighted opportunity cost:
\begin{equation}
    \bar o_t^{w}(\boldsymbol I_t)
    =
    \frac{
        \sum_{i\in \mathcal{A}(\boldsymbol I_t)}
        \exp(a_{i})\,
        \bar o_{i,t}(\boldsymbol I_t)
    }{
        \sum_{j\in \mathcal{A}(\boldsymbol I_t)}
        \exp(a_{j})
    }.
    \nonumber
\end{equation}
Thus, $\bar o_t^{w}(\boldsymbol I_t)$ is a scalar representing the attractiveness-weighted opportunity cost across all available products. Here, we use superscript \(w\) to denote attractiveness weighting.

Following the unified pricing structure of \citet{dong2009dynamic}, we then
assign the same price to all available products:
\begin{equation}
    r^{\mathrm{agg}}(\state)
    =
    \bar o_t^{w}(\boldsymbol I_t)
    +
    \frac{m_t^{\mathrm{agg}}(\state)}{\beta},
    \label{eq:unified_aggregated_policy}
\end{equation}
where $m_t^{\mathrm{agg}}(\state)>1$ is the unique solution of
\begin{equation}
    \left(m_t^{\mathrm{agg}}(\state)-1\right)
    \exp\!\left(m_t^{\mathrm{agg}}(\state)\right)
    =
    \exp\!\left(
        a^{w}
        -
        \beta \bar o_t^{w}(\boldsymbol I_t)
    \right),
    \nonumber
\end{equation}
with the aggregated attractiveness
\begin{equation}
    a^{w}
    =
    \log\!\left(
        \sum_{i\in\mathcal{A}(\boldsymbol I_t)}
        \exp(a_{i})
    \right).
    \nonumber
\end{equation}

\subsection{Detailed Feature Definitions}
\label{app:feature_engineering}

Table~\ref{tab:features_summary} reports the complete set of features used by the statistical predictors. Baseline-derived features use the prices and opportunity costs of the corresponding time-dependent or time-independent policies. Unless otherwise stated, relative and competitor-based features are constructed from the \TimeIndItinerary\ policy.

\begin{table}[htbp]
\centering
\caption{Features used by the statistical predictors.}
\label{tab:features_summary}
\scriptsize
\renewcommand{\arraystretch}{1.08}
\setlength{\tabcolsep}{4pt}
\begin{threeparttable}
\begin{tabularx}{\textwidth}{
    >{\raggedright\arraybackslash}p{2.25cm}
    >{\raggedright\arraybackslash}p{5.1cm}
    X
}
\toprule
\textbf{Family}
&
\textbf{Feature}
&
\textbf{Description}
\\
\midrule

\multirow{4}{*}{State and choice}
&
Inverse price sensitivity
&
Price-scale signal derived from the choice-model parameters
\\

&
Capacity ratio
&
$I_{i,t}/\max\{T-t,1\}$
\\

&
Remaining-horizon-scaled $\bar{o}^{\mathrm{TI}}_{i,t}$
&
$\bar{o}^{\mathrm{TI}}_{i,t}(T-t)/T$
\\

&
State and choice parameters
&
Remaining inventory, time, and product and choice-model parameters
\\
\midrule

Myopic prices
&
Single- and multi-product myopic prices
&
One-period prices obtained by optimizing products separately or jointly
\\
\midrule

\multirow{2}{*}{Itinerary DP}
&
Prices of \TimeIndItinerary\ and \TimeDepItinerary
&
Prices obtained from the corresponding single-itinerary DPs
\\

&
Opportunity costs of \TimeIndItinerary\ and \TimeDepItinerary
&
$\bar{o}^{\mathrm{TI}}_{i,t}$ and
$\bar{o}^{\mathrm{TD}}_{i,t}$ obtained from the corresponding DPs
\\
\midrule

Marginal inventory value
&
Changes in $\bar{o}^{\mathrm{TI}}_{i,t}$ $(\pm1)$
&
Changes in the itinerary-level opportunity cost when inventory is increased or decreased by one unit
\\
\midrule

\multirow{2}{*}{Joint pricing}
&
Prices of \TimeIndApp\ and \TimeDepApp
&
Joint differentiated prices using itinerary-level opportunity costs directly
\\

&
Prices of \TimeIndUnified\ and \TimeDepUnified
&
Common prices based on attractiveness-weighted opportunity costs
\\
\midrule

Littlewood
&
Littlewood-based opportunity-cost proxy
&
Reference price multiplied by the probability that demand exceeds inventory
\\
\midrule

\multirow{2}{*}{Relative signals}
&
Price-to-opportunity-cost ratio and price rank
&
Relative price-to-inventory-value signal and percentile rank among available products
\\

&
Competitor prices and opportunity costs
&
Mean prices and opportunity costs of the other available products
\\

\bottomrule
\end{tabularx}

\begin{tablenotes}[flushleft]
\footnotesize
\item Opportunity cost refers to the marginal value of one unit of remaining inventory.
$\bar{o}^{\mathrm{TI}}_{i,t}$ and $\bar{o}^{\mathrm{TD}}_{i,t}$ are obtained from
\TimeIndItinerary\ and \TimeDepItinerary, respectively.
\end{tablenotes}
\end{threeparttable}
\end{table}

\paragraph{State and choice features.}
These features include the remaining inventory, time index, quality index, price sensitivity, inverse price sensitivity, and the normalized capacity ratio
\[
    \frac{I_{i,t}}{\max\{T-t,1\}}.
\]
We also include the remaining-horizon-scaled opportunity cost from \TimeIndItinerary.
\[
    \bar{o}^{\mathrm{TI}}_{i,t}\frac{T-t}{T}.
\]

\paragraph{Myopic pricing features.}
These features are computed by solving one-period pricing problems.

\paragraph{DP-based features.}
We include the prices and opportunity costs produced by the
\TimeIndItinerary\ and \TimeDepItinerary\ policies, as well as the prices
generated by \TimeIndApp, \TimeDepApp, \TimeIndUnified, and
\TimeDepUnified. Their policy formulations are given in
Appendix~\ref{app:benchmark_policies}. These features provide the statistical predictor with tractable approximations of inventory value and joint pricing.

\paragraph{Marginal opportunity-cost features.}
To capture the local sensitivity of inventory value, we use
\[
    \Delta^- \bar o^{\mathrm{TI}}_{i,t}
    =
    \bar o^{\mathrm{TI}}_{i,t}(I_{i,t}-1)
    -
    \bar o^{\mathrm{TI}}_{i,t}(I_{i,t}),
\]
and
\[
    \Delta^+ \bar o^{\mathrm{TI}}_{i,t}
    =
    \bar o^{\mathrm{TI}}_{i,t}(I_{i,t}+1)
    -
    \bar o^{\mathrm{TI}}_{i,t}(I_{i,t}).
\]

\paragraph{Relative and cross-product features.}
We further include the price-to-opportunity-cost ratio, the percentile rank of the price of an itinerary among available products, and the mean price and opportunity cost of competing products. Unless otherwise stated, these features are constructed from the \TimeIndItinerary\ policy.


\newpage

\section{Supplementary Computational Results}
\subsection{Full-Information Test Results}\label{appendix:full-info testing}
Table~\ref{tab:full_summary:fi} summarizes the detailed revenue results for all evaluated policies, where $\pm$ denotes the standard deviation. Among the baselines, \TimeDepApp achieves the best overall performance under MNL, whereas under the \MMNL setting, \TimeIndItinerary performs best. Notably, \TimeDepItinerary outperforms \TimeIndItinerary in only a few \MMNL instances, despite consistently performing better under MNL. This reversal highlights the sensitivity of DP-based baselines to model misspecification: structural refinements that are beneficial when the MNL assumptions are well aligned with the true demand environment may lose their advantage, or even become detrimental, when the underlying choice behavior deviates substantially from MNL.

\begin{table}[htbp]
\centering
\caption{Summary of full-information results. Validation-selected baselines are highlighted in gray.}
\label{tab:full_summary:fi}

\adjustbox{max width=0.9\textwidth}{
\begin{threeparttable}
\setlength{\tabcolsep}{1pt}
\renewcommand{\arraystretch}{1.25}
\footnotesize

\begin{tabular}{>{\bfseries}l c c c c c c c}
\toprule
Instance
& \TimeIndItinerary
& \TimeDepItinerary
& \TimeIndApp
& \TimeDepApp
& \TimeIndUnified
& \TimeDepUnified
& \textbf{Ours} \\
\midrule

\rowcolor{subgray}\multicolumn{8}{c}{\textbf{MNL (full information)}} \\

(6,150,400)
& 3,044.7$_{\scriptscriptstyle \pm 142.2}$
& 3,296.5$_{\scriptscriptstyle \pm 143.6}$
& 3,399.9$_{\scriptscriptstyle \pm 126.8}$
& \cellcolor{evalgray}3,402.5$_{\scriptscriptstyle \pm 128.2}$
& 3,043.5$_{\scriptscriptstyle \pm 76.4}$
& 3,043.5$_{\scriptscriptstyle \pm 76.4}$
& 3,427.0$_{\scriptscriptstyle \pm 112.8}$ \\

(6,210,1600)
& 9,583.5$_{\scriptscriptstyle \pm 289.7}$
& 11,511.1$_{\scriptscriptstyle \pm 293.6}$
& 9,416.3$_{\scriptscriptstyle \pm 335.6}$
& \cellcolor{evalgray}11,558.2$_{\scriptscriptstyle \pm 240.9}$
& 10,930.6$_{\scriptscriptstyle \pm 177.6}$
& 9,776.0$_{\scriptscriptstyle \pm 163.2}$
& 11,980.2$_{\scriptscriptstyle \pm 206.9}$ \\

(12,90,1800)
& 11,682.2$_{\scriptscriptstyle \pm 361.0}$
& 15,618.2$_{\scriptscriptstyle \pm 323.2}$
& 11,063.8$_{\scriptscriptstyle \pm 344.2}$
& \cellcolor{evalgray}16,074.3$_{\scriptscriptstyle \pm 261.9}$
& 15,061.5$_{\scriptscriptstyle \pm 263.4}$
& 12,161.6$_{\scriptscriptstyle \pm 48.4}$
& 16,918.4$_{\scriptscriptstyle \pm 184.2}$ \\

(12,180,2000)
& 17,445.9$_{\scriptscriptstyle \pm 383.6}$
& 20,641.7$_{\scriptscriptstyle \pm 393.1}$
& 18,094.5$_{\scriptscriptstyle \pm 384.7}$
& \cellcolor{evalgray}21,521.1$_{\scriptscriptstyle \pm 300.8}$
& 19,387.3$_{\scriptscriptstyle \pm 218.3}$
& 18,218.7$_{\scriptscriptstyle \pm 193.1}$
& 21,787.5$_{\scriptscriptstyle \pm 277.1}$ \\

(15,30,600)
& 4,500.7$_{\scriptscriptstyle \pm 173.1}$
& 5,726.5$_{\scriptscriptstyle \pm 188.3}$
& 4,639.8$_{\scriptscriptstyle \pm 170.4}$
& \cellcolor{evalgray}6,048.0$_{\scriptscriptstyle \pm 133.6}$
& 5,593.4$_{\scriptscriptstyle \pm 108.8}$
& 5,079.4$_{\scriptscriptstyle \pm 70.7}$
& 6,296.1$_{\scriptscriptstyle \pm 163.8}$ \\

(15,120,1400)
& 13,522.3$_{\scriptscriptstyle \pm 354.8}$
& 16,997.7$_{\scriptscriptstyle \pm 364.1}$
& 14,294.8$_{\scriptscriptstyle \pm 355.3}$
& \cellcolor{evalgray}17,715.3$_{\scriptscriptstyle \pm 288.1}$
& 15,775.2$_{\scriptscriptstyle \pm 201.7}$
& 14,202.8$_{\scriptscriptstyle \pm 157.7}$
& 18,246.0$_{\scriptscriptstyle \pm 268.4}$ \\

(21,30,800)
& 5,944.5$_{\scriptscriptstyle \pm 229.7}$
& 7,930.3$_{\scriptscriptstyle \pm 233.3}$
& 6,319.2$_{\scriptscriptstyle \pm 220.0}$
& \cellcolor{evalgray}8,278.2$_{\scriptscriptstyle \pm 176.4}$
& 8,026.8$_{\scriptscriptstyle \pm 115.9}$
& 6,966.2$_{\scriptscriptstyle \pm 83.1}$
& 8,791.4$_{\scriptscriptstyle \pm 156.6}$ \\

(21,30,1400)
& 9,534.6$_{\scriptscriptstyle \pm 283.8}$
& \cellcolor{evalgray}12,610.3$_{\scriptscriptstyle \pm 289.8}$
& 8,441.9$_{\scriptscriptstyle \pm 354.0}$
& 12,428.2$_{\scriptscriptstyle \pm 250.0}$
& 11,391.6$_{\scriptscriptstyle \pm 290.1}$
& 9,218.1$_{\scriptscriptstyle \pm 71.9}$
& 12,654.2$_{\scriptscriptstyle \pm 281.8}$ \\

(24,30,600)
& 5,097.0$_{\scriptscriptstyle \pm 185.9}$
& 6,863.8$_{\scriptscriptstyle \pm 201.4}$
& 6,077.5$_{\scriptscriptstyle \pm 170.3}$
& \cellcolor{evalgray}7,243.8$_{\scriptscriptstyle \pm 139.9}$
& 6,612.3$_{\scriptscriptstyle \pm 88.7}$
& 6,048.6$_{\scriptscriptstyle \pm 76.4}$
& 7,529.5$_{\scriptscriptstyle \pm 151.9}$ \\

(30,120,1400)
& 14,232.4$_{\scriptscriptstyle \pm 325.8}$
& 17,177.0$_{\scriptscriptstyle \pm 367.7}$
& 17,837.5$_{\scriptscriptstyle \pm 270.1}$
& \cellcolor{evalgray}18,444.2$_{\scriptscriptstyle \pm 283.6}$
& 14,841.7$_{\scriptscriptstyle \pm 95.6}$
& 14,841.5$_{\scriptscriptstyle \pm 95.5}$
& 18,525.0$_{\scriptscriptstyle \pm 274.8}$ \\

\midrule
\rowcolor{subgray}\multicolumn{8}{c}{\textbf{\MMNL (full information)}} \\

(6,150,400)
& 2,176.5$_{\scriptscriptstyle \pm 191.6}$
& \cellcolor{evalgray}2,238.3$_{\scriptscriptstyle \pm 189.3}$
& 1,574.5$_{\scriptscriptstyle \pm 130.2}$
& 1,574.5$_{\scriptscriptstyle \pm 130.2}$
& 1,879.9$_{\scriptscriptstyle \pm 103.9}$
& 1,879.9$_{\scriptscriptstyle \pm 103.9}$
& 2,998.8$_{\scriptscriptstyle \pm 124.1}$ \\

(6,210,1600)
& \cellcolor{evalgray}6,582.0$_{\scriptscriptstyle \pm 317.5}$
& 6,509.8$_{\scriptscriptstyle \pm 288.4}$
& 6,481.6$_{\scriptscriptstyle \pm 328.4}$
& 6,233.5$_{\scriptscriptstyle \pm 226.0}$
& 5,772.6$_{\scriptscriptstyle \pm 207.4}$
& 6,544.9$_{\scriptscriptstyle \pm 189.6}$
& 7,196.4$_{\scriptscriptstyle \pm 122.9}$ \\

(12,90,1800)
& \cellcolor{evalgray}10,376.9$_{\scriptscriptstyle \pm 467.7}$
& 9,786.7$_{\scriptscriptstyle \pm 399.5}$
& 9,767.3$_{\scriptscriptstyle \pm 503.9}$
& 8,867.8$_{\scriptscriptstyle \pm 414.6}$
& 4,932.7$_{\scriptscriptstyle \pm 345.2}$
& 8,557.6$_{\scriptscriptstyle \pm 258.8}$
& 11,392.8$_{\scriptscriptstyle \pm 265.1}$ \\

(12,180,2000)
& \cellcolor{evalgray}15,650.9$_{\scriptscriptstyle \pm 604.1}$
& 15,428.8$_{\scriptscriptstyle \pm 554.9}$
& 11,805.1$_{\scriptscriptstyle \pm 604.5}$
& 10,792.8$_{\scriptscriptstyle \pm 505.1}$
& 6,410.3$_{\scriptscriptstyle \pm 420.3}$
& 10,132.6$_{\scriptscriptstyle \pm 297.7}$
& 18,214.5$_{\scriptscriptstyle \pm 534.4}$ \\

(15,30,600)
& \cellcolor{evalgray}3,305.2$_{\scriptscriptstyle \pm 265.6}$
& 2,937.6$_{\scriptscriptstyle \pm 282.3}$
& 2,791.1$_{\scriptscriptstyle \pm 283.4}$
& 3,077.8$_{\scriptscriptstyle \pm 201.9}$
& 2,470.5$_{\scriptscriptstyle \pm 193.3}$
& 2,931.6$_{\scriptscriptstyle \pm 132.6}$
& 3,922.6$_{\scriptscriptstyle \pm 99.1}$ \\

(15,120,1400)
& 9,916.7$_{\scriptscriptstyle \pm 513.3}$
& \cellcolor{evalgray}10,231.3$_{\scriptscriptstyle \pm 503.3}$
& 6,585.1$_{\scriptscriptstyle \pm 456.1}$
& 8,582.0$_{\scriptscriptstyle \pm 354.4}$
& 5,607.4$_{\scriptscriptstyle \pm 318.3}$
& 9,730.5$_{\scriptscriptstyle \pm 238.5}$
& 12,162.9$_{\scriptscriptstyle \pm 197.7}$ \\

(21,30,800)
& 3,130.0$_{\scriptscriptstyle \pm 275.5}$
& 2,796.7$_{\scriptscriptstyle \pm 260.0}$
& 3,186.2$_{\scriptscriptstyle \pm 261.3}$
& \cellcolor{evalgray}3,584.5$_{\scriptscriptstyle \pm 187.8}$
& 2,301.5$_{\scriptscriptstyle \pm 209.7}$
& 3,199.0$_{\scriptscriptstyle \pm 204.6}$
& 5,234.3$_{\scriptscriptstyle \pm 129.4}$ \\

(21,30,1400)
& 5,643.0$_{\scriptscriptstyle \pm 443.5}$
& 5,251.0$_{\scriptscriptstyle \pm 416.1}$
& 5,956.6$_{\scriptscriptstyle \pm 453.5}$
& \cellcolor{evalgray}6,596.4$_{\scriptscriptstyle \pm 307.8}$
& 4,702.0$_{\scriptscriptstyle \pm 385.0}$
& 5,333.0$_{\scriptscriptstyle \pm 267.7}$
& 8,464.1$_{\scriptscriptstyle \pm 161.9}$ \\

(24,30,600)
& 2,586.9$_{\scriptscriptstyle \pm 259.6}$
& 2,292.5$_{\scriptscriptstyle \pm 275.4}$
& 2,790.7$_{\scriptscriptstyle \pm 250.8}$
& \cellcolor{evalgray}3,047.0$_{\scriptscriptstyle \pm 197.0}$
& 2,174.9$_{\scriptscriptstyle \pm 205.8}$
& 2,453.1$_{\scriptscriptstyle \pm 203.4}$
& 4,546.9$_{\scriptscriptstyle \pm 103.5}$ \\

(30,120,1400)
& \cellcolor{evalgray}8,423.2$_{\scriptscriptstyle \pm 425.4}$
& 6,406.6$_{\scriptscriptstyle \pm 417.4}$
& 7,366.2$_{\scriptscriptstyle \pm 381.0}$
& 7,234.8$_{\scriptscriptstyle \pm 313.1}$
& 6,043.5$_{\scriptscriptstyle \pm 330.4}$
& 6,043.5$_{\scriptscriptstyle \pm 330.4}$
& 11,931.7$_{\scriptscriptstyle \pm 166.9}$ \\

\bottomrule
\end{tabular}
\end{threeparttable}
}

\end{table}

\subsection{Data-Driven Overall Results}

Table~\ref{tab:summary:dd} complements the main results in Table~\ref{tab: data-driven overall results} by reporting the test revenues of all six baselines, together with the validation-selected and test-best DFL configurations. In the realistic data-driven setting, the true choice model $\bbP$ is unknown, so baseline configurations are selected based on their validation performance under the fitted MNL model $\bbQ$. Such model-based selection may not identify the baseline that performs best in the ground-truth environment $\bbP$. We therefore also report, as an ex post diagnostic, the best baseline according to its actual test performance. This comparison does not alter the selection protocol used in our main results, but provides a stronger benchmark for evaluation. Even against this more favorable benchmark, our approach achieves the best overall performance, with only two \MMNL instances showing slightly lower revenue, both by less than $1\%$. These results further demonstrate the robustness of the proposed architecture.

\begin{table}[htbp]
\centering
\caption{Data-driven test revenues for the six baselines and for validation-selected and test-best DFL configurations. Gray highlights the validation-selected baseline.}
\label{tab:summary:dd}
\setlength{\tabcolsep}{1.5pt}
\renewcommand{\arraystretch}{1.15}

\resizebox{\textwidth}{!}{%
\begin{tabular}{lcccccccc}
\toprule
\multirow{2}{*}{\textbf{Instance}}
& \multirow{2}{*}{\textbf{\TimeIndItinerary }}
& \multirow{2}{*}{\textbf{\TimeDepItinerary}}
& \multirow{2}{*}{\textbf{\TimeIndApp}}
& \multirow{2}{*}{\textbf{\TimeDepApp}}
& \multirow{2}{*}{\textbf{\TimeIndUnified}}
& \multirow{2}{*}{\textbf{\TimeDepUnified}}
& \multicolumn{2}{c}{\textbf{Ours}} \\
\cmidrule(lr){8-9}
&
&
&
&
&
&
&
\textbf{validation-selected}
& \textbf{test-best}
\\
\midrule

\rowcolor{subgray}\multicolumn{9}{c}{\textbf{\large MNL (data-driven)}} \\

$(6,150,400)$
& $2{,}927.0_{\pm145.1}$
& $2{,}941.4_{\pm124.0}$
& $3{,}351.2_{\pm119.9}$
& \cellcolor{evalgray}$3{,}351.2_{\pm119.6}$
& $2{,}887.8_{\pm67.1}$
& $2{,}887.8_{\pm67.1}$
& $3{,}380.1_{\pm112.2}$
& $3{,}380.1_{\pm112.2}$ \\

$(6,210,1600)$
& $8{,}942.6_{\pm310.1}$
& $10{,}202.8_{\pm262.3}$
& $9{,}223.1_{\pm302.7}$
& \cellcolor{evalgray}$11{,}338.1_{\pm243.5}$
& $10{,}794.9_{\pm174.4}$
& $9{,}423.6_{\pm143.4}$
& $11{,}814.4_{\pm220.9}$
& $11{,}814.4_{\pm220.9}$ \\

$(12,90,1800)$
& $10{,}860.9_{\pm328.5}$
& $15{,}782.8_{\pm220.6}$
& $11{,}532.5_{\pm325.0}$
& \cellcolor{evalgray}$16{,}460.5_{\pm218.9}$
& $13{,}452.3_{\pm154.7}$
& $10{,}356.2_{\pm32.6}$
& $16{,}596.4_{\pm204.0}$
& $16{,}596.4_{\pm204.0}$ \\

$(12,180,2000)$
& $16{,}084.1_{\pm425.0}$
& $17{,}741.2_{\pm323.3}$
& $17{,}991.0_{\pm396.4}$
& \cellcolor{evalgray}$21{,}321.7_{\pm319.8}$
& $17{,}988.4_{\pm186.6}$
& $17{,}177.4_{\pm173.3}$
& $21{,}516.7_{\pm284.5}$
& $21{,}516.7_{\pm284.5}$ \\

$(15,30,600)$
& $4{,}305.1_{\pm174.9}$
& $5{,}526.4_{\pm147.8}$
& $4{,}920.9_{\pm164.3}$
& \cellcolor{evalgray}$6{,}052.0_{\pm122.8}$
& $5{,}516.3_{\pm88.9}$
& $4{,}332.6_{\pm26.4}$
& $6{,}118.6_{\pm138.1}$
& $6{,}118.6_{\pm138.1}$ \\

$(15,120,1400)$
& $12{,}394.9_{\pm357.5}$
& $14{,}094.7_{\pm268.8}$
& $14{,}507.4_{\pm332.3}$
& \cellcolor{evalgray}$17{,}728.0_{\pm285.7}$
& $14{,}501.6_{\pm176.1}$
& $13{,}374.9_{\pm150.1}$
& $17{,}975.7_{\pm262.1}$
& $17{,}975.7_{\pm262.1}$ \\

$(21,30,800)$
& $5{,}988.2_{\pm219.9}$
& $7{,}415.1_{\pm132.6}$
& $6{,}969.7_{\pm206.0}$
& \cellcolor{evalgray}$8{,}543.7_{\pm149.0}$
& $7{,}096.7_{\pm92.1}$
& $5{,}930.3_{\pm73.3}$
& $8{,}609.7_{\pm130.1}$
& $8{,}609.7_{\pm130.1}$ \\

$(21,30,1400)$
& $12{,}293.7_{\pm137.1}$
& $8{,}625.8_{\pm91.9}$
& $12{,}242.3_{\pm110.7}$
& \cellcolor{evalgray}$9{,}125.8_{\pm65.4}$
& $9{,}926.3_{\pm37.6}$
& $5{,}783.6_{\pm10.6}$
& $11{,}234.7_{\pm106.7}$
& $12{,}444.1_{\pm151.7}$ \\

$(24,30,600)$
& $4{,}827.2_{\pm181.6}$
& $4{,}669.8_{\pm148.5}$
& $6{,}279.5_{\pm182.8}$
& \cellcolor{evalgray}$7{,}317.8_{\pm137.4}$
& $6{,}061.4_{\pm80.5}$
& $5{,}689.8_{\pm66.5}$
& $7{,}376.0_{\pm145.0}$
& $7{,}376.0_{\pm145.0}$ \\

$(30,120,1400)$
& $12{,}973.6_{\pm327.5}$
& $9{,}797.9_{\pm187.6}$
& $17{,}681.1_{\pm267.6}$
& \cellcolor{evalgray}$18{,}128.1_{\pm291.0}$
& $14{,}056.1_{\pm81.7}$
& $14{,}056.0_{\pm81.7}$
& $18{,}193.8_{\pm244.2}$
& $18{,}193.8_{\pm244.2}$ \\

\midrule
\rowcolor{subgray}\multicolumn{9}{c}{\textbf{\large MixMNL (data-driven)}} \\

$(6,150,400)$
& $1{,}743.4_{\pm237.7}$
& $2{,}148.7_{\pm198.3}$
& $2{,}533.3_{\pm51.6}$
& \cellcolor{evalgray}$2{,}682.7_{\pm45.3}$
& $2{,}252.9_{\pm18.0}$
& $2{,}252.4_{\pm17.9}$
& $2{,}972.8_{\pm82.4}$
& $2{,}972.8_{\pm82.4}$ \\

$(6,210,1600)$
& $6{,}329.4_{\pm366.1}$
& $6{,}619.4_{\pm294.4}$
& $6{,}741.9_{\pm249.4}$
& \cellcolor{evalgray}$7{,}182.6_{\pm152.4}$
& $6{,}970.8_{\pm136.2}$
& $6{,}340.3_{\pm83.9}$
& $7{,}365.5_{\pm146.9}$
& $7{,}365.5_{\pm146.9}$ \\

$(12,90,1800)$
& $10{,}909.1_{\pm510.9}$
& $11{,}418.7_{\pm335.8}$
& $11{,}708.1_{\pm481.1}$
& \cellcolor{evalgray}$11{,}198.6_{\pm223.1}$
& $9{,}141.9_{\pm151.6}$
& $7{,}482.3_{\pm47.6}$
& $11{,}569.5_{\pm298.2}$
& $11{,}592.7_{\pm272.0}$ \\

$(12,180,2000)$
& $14{,}945.6_{\pm592.5}$
& $13{,}594.7_{\pm456.9}$
& $13{,}990.2_{\pm465.2}$
& \cellcolor{evalgray}$12{,}935.1_{\pm287.2}$
& $11{,}618.5_{\pm110.3}$
& $10{,}873.6_{\pm81.7}$
& $16{,}361.7_{\pm477.6}$
& $17{,}165.4_{\pm430.7}$ \\

$(15,30,600)$
& $3{,}261.8_{\pm276.3}$
& $3{,}794.7_{\pm145.5}$
& $3{,}556.6_{\pm223.9}$
& $4{,}013.0_{\pm101.1}$
& $3{,}222.9_{\pm116.4}$
& \cellcolor{evalgray}$2{,}993.8_{\pm42.0}$
& $3{,}996.6_{\pm152.5}$
& $3{,}996.6_{\pm152.5}$ \\

$(15,120,1400)$
& $10{,}375.0_{\pm476.1}$
& $10{,}442.9_{\pm317.9}$
& $10{,}638.9_{\pm292.4}$
& \cellcolor{evalgray}$11{,}010.5_{\pm200.0}$
& $8{,}985.1_{\pm75.7}$
& $8{,}791.8_{\pm59.0}$
& $12{,}395.0_{\pm281.8}$
& $12{,}395.0_{\pm281.8}$ \\

$(21,30,800)$
& $3{,}452.7_{\pm265.9}$
& $4{,}535.7_{\pm145.1}$
& $4{,}082.0_{\pm184.9}$
& $5{,}050.4_{\pm97.1}$
& $4{,}944.1_{\pm87.4}$
& \cellcolor{evalgray}$4{,}179.7_{\pm40.8}$
& $5{,}360.7_{\pm119.4}$
& $5{,}372.9_{\pm100.6}$ \\

$(21,30,1400)$
& $5{,}691.7_{\pm402.0}$
& $7{,}692.0_{\pm162.9}$
& $6{,}493.0_{\pm350.9}$
& \cellcolor{evalgray}$7{,}821.8_{\pm119.3}$
& $7{,}304.0_{\pm180.2}$
& $4{,}573.6_{\pm27.1}$
& $8{,}704.1_{\pm122.8}$
& $8{,}704.1_{\pm122.8}$ \\

$(24,30,600)$
& $3{,}060.9_{\pm241.7}$
& $2{,}937.6_{\pm110.1}$
& $3{,}773.4_{\pm137.0}$
& $3{,}901.5_{\pm79.8}$
& \cellcolor{evalgray}$3{,}953.8_{\pm70.0}$
& $3{,}747.4_{\pm38.5}$
& $4{,}261.8_{\pm113.5}$
& $4{,}261.8_{\pm113.5}$ \\

$(30,120,1400)$
& $9{,}599.5_{\pm409.6}$
& $6{,}139.7_{\pm184.0}$
& $9{,}968.3_{\pm165.6}$
& \cellcolor{evalgray}$10{,}238.1_{\pm117.9}$
& $9{,}335.7_{\pm42.2}$
& $9{,}334.6_{\pm42.2}$
& $10{,}931.8_{\pm199.2}$
& $10{,}931.8_{\pm199.2}$ \\

\bottomrule
\end{tabular}%
}
\end{table}

\end{APPENDICES}

\end{document}